\newcommand{\0}{\emptyset}
\newcommand{\M}{{\cal M}}
\newcommand{\Hh}{{\cal H}}
\newcommand{\A}{{\cal A}}
\newcommand{\B}{{\cal B}}
\newcommand{\G}{{\cal G}}
\newcommand{\K}{{\cal K}}
\newcommand{\R}{{\cal R}}
\newcommand{\Po}{{\cal P}}
\newcommand{\Def}{{\rm Def}}
\newcommand{\End}{{\rm End}}
\newcommand{\Ker}{{\rm Ker}}
\newcommand{\Img}{{\rm Im}}
\newcommand{\Aut}{{\rm Aut}}
\newcommand{\bi}{\begin{itemize}}
\newcommand{\ei}{\end{itemize}}
\newtheorem{theorem}{Theorem}[section]
\newtheorem{lemma}[theorem]{Lemma}
\newtheorem{corollary}[theorem]{Corollary}
\newtheorem{proposition}[theorem]{Proposition}
\newtheorem{definition}[theorem]{Definition}
\newtheorem{fact}[theorem]{Fact}
\newtheorem{remark}[theorem]{Remark}
\newenvironment{proof}{\addvspace{8pt plus 2pt minus
    2pt}\emph{Proof}}{ $\square$\par\addvspace{8pt plus 2pt minus
    2pt}}
\title{Weak heirs, coheirs and the Ellis semigroups}
\author{Adam Malinowski \and Ludomir Newelski\thanks{Research supported by the Narodowe
    Centrum Nauki grant no. 2018/31/B/ST1/00357}}
\date{\mbox{}}
\begin{document}
\maketitle
\begin{abstract}
Assume $G\prec H$ are groups and $\A\subseteq\Po(G),\ \B\subseteq\Po(H)$ are algebras of sets closed under left group translation. Under some additional assumptions we find algebraic connections between the Ellis [semi]groups of the $G$-flow $S(\A)$ and the $H$-flow $S(\B)$. We apply these results in the model theoretic context. Namely, assume $G$ is a group definable in a model $M$ and $M\prec^* N$. Using weak heirs and weak coheirs we point out some algebraic connections between the Ellis semigroups $S_{ext,G}(M)$ and $S_{ext,G}(N)$. Assuming every minimal left ideal in $S_{ext,G}(N)$ is a group we prove that the Ellis groups of $S_{ext,G}(M)$ are isomorphic to closed subgroups of the Ellis groups of $S_{ext,G}(N)$.
\end{abstract}
\section*{Introduction}

Assume $M$ is a model for a relational language $L$, $T=Th(M)$ and $G=G(M)$ is an infinite group $0$-definable in $M$. All models of $T$ we consider are elementary submodels of a monster model $\M$ of $T$. In this paper we are interested in topological dynamics of some $G$-flows of types over $M$. We assume the reader is familiar with the basic notions of topological dynamics \cite{ellis, aus, gla} and model theory \cite{hodges, lascar, pillay} as well as topological dynamics of definable groups \cite{ne1}, although we shall recall some of them.

Let $\Def_{ext,G}(M)$ be the algebra of externally definable subsets of $G$ and let $S_{ext,G}(M)$ be the Stone space of ultrafilters in $\Def_{ext,G}(M)$, called external $G$-types over $M$. $S_{ext,G}(M)$ is a $G$-flow, in model theory it is a counterpart of the universal $G$-flow $\beta G$ in topological dynamics. In particular the $G$-flow $S_{ext,G}(M)$ is isomorphic to its Ellis semigroup, hence it carries the induced semigroup operation $*$ that is continuous in the first coordinate (a semigroup like that is called left-continuous).

There is a question, how are the flows $S_{ext,G}(M)$ related for various models $M$ of $T$, in particular are the semigroups $(S_{ext,G}(M),*)$ and their Ellis groups related algebraically? \cite{ne2,ne3} contain some results on these questions. The results of \cite{ne1} and \cite[Proposition 3.1]{krupil} indicate that the quotient groups of $G(\M)$ by its model theoretic $M$-connected components are homomorphic images of the semigroup $S_{ext,G}(M)$ and also of its Ellis groups.

In order to compare the $G$-flow $S_{ext,G}(M)$ and the $G(N)$-flow $S_{ext,G}(N)$ for $M,N\models T$ we need to be able to compare the algebras $\Def_{ext,G}(M)$ and $\Def_{ext,G}(N)$. So it is natural to assume that $M\prec N$, but it is not enough. We need a stronger relation between $M$ and $N$, called $*$-elementary inclusion $\prec^*$, we recall it later.

Assume $M\prec^* N$. In \cite[Theorem 4.1]{ne2} we proved that if there is a generic type in $S_{ext,G}(N)$ (equivalently, $S_{ext,G}(N)$ has a unique minimal subflow \cite{ne1}), then the Ellis groups of $S_{ext,G}(M)$ are homomorphic images of subgroups of the Ellis groups of $S_{ext,G}(N)$. Here we prove that if every minimal left ideal in $S_{ext,G}(N)$ is a group, then the Ellis groups of $S_{ext,G}(M)$ are isomorphic to closed subgroups of the Ellis groups of $S_{ext,G}(N)$. The assumption that every minimal left ideal in $S_{ext,G}(N)$ is a group is dual to the assumption that $S_{ext,G}(N)$ contains a generic type.

We present our results in a general combinatorial set up, making the presentation easier. So we consider infinite groups $G\prec H$, a $G$-algebra $\A\subseteq\Po(\G)$ and an $H$-algebra $\B\subseteq\Po(H)$ and under some additional assumptions we find some algebraic connections between the Ellis [semi]groups of $S(\A)$ and $S(\B)$. The important tools in our proofs are the notions of weak heir from \cite{ne2} and weak coheir (introduced here). We think they are of independent interest, so we elaborate on them. We characterize them in the case where $T$ is stable, by means of local forking. We provide some examples.

\section{Preliminaries}
Assume $G$ is an infinite (discrete) group. We use $e$ to denote the identity element of $G$ and also of any other group. Assume that $\A$ is a  $G$-algebra, that is an algebra of subsets of $G$ closed under left translation by elements of $G$. In this case its Stone space $S(\A)$ is naturally a $G$-flow, with $G$ acting on $S(\A)$ by left translation. For $A\in\A$, $S(\A)\cap [A]$ denotes the set $\{p\in S(\A):A\in p\}$, a clopen subset of $S(\A)$. When $p\in S(\A)$ then $cl(Gp)$ is the subflow of $S(\A)$ generated by $p$.

For $g\in G$ let $l_g:S(\A)\to S(\A)$ be the left translation by $g$. Let $E(S(\A))$ be the topological closure of the set $\{l_g:g\in G\}$ in the space of functions $S(\A)\to S(\A)$ with the topology of pointwise convergence. $E(S(\A))$ is a $G$-flow, where $g\cdot f = l_g\circ f$ for $g\in G$ and $f\in E(S(\A))$. $E(S(\A))$ is also a compact left-continuous semigroup, with respect to composition of functions, called the Ellis semigroup of $S(\A)$.

Assume $S$ is a semigroup. We write $I\triangleleft S$ [$I\triangleleft_m S$] when $I$ is a [minimal] left ideal in $S$. We say that $u\in S$ is an idempotent if $u^2=u$. The next fact is a fundamental structural result of Ellis.

\begin{fact}[\mbox{\cite{ellis},\cite[Proposition 4.2]{eek}}]\label{1.1}
Assume $S$ is a compact Hausdorff left-continuous semigroup and let $J$ be the set of idempotents in $S$.\\
(1) Given $I\triangleleft_m S$, the set $J(I)=I\cap J$ is non-empty.\\
(2) For every $I\triangleleft_m S$ and $u\in J(I)$, the set $uI$ is a subgroup of $I$ and $I$ is a disjoint union of such subgroups.\\
(3) The groups $uI,I\triangleleft_m S,u\in J(I)$, are all isomorphic.\\
(4) If $a\in S$ and $I\triangleleft_m S$, then $Ia\triangleleft_m S$.
\end{fact}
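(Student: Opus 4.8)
Since this is the classical structure theorem of Ellis, the plan is to reprove it along the standard lines, whose linchpin is the \emph{idempotent lemma}: every non-empty compact left-continuous subsemigroup $T$ of $S$ contains an idempotent. I would prove this by applying Zorn's lemma to the family of non-empty closed subsemigroups of $T$ ordered by reverse inclusion (a descending chain has non-empty intersection by compactness), obtaining a minimal one $T_0$; for any $p\in T_0$ the set $T_0p=\rho_p(T_0)$ is a non-empty closed (continuity of $\rho_p$!) subsemigroup of $T_0$, hence equals $T_0$, so $qp=p$ for some $q\in T_0$; then $\{x\in T_0:xp=p\}$ is again a non-empty closed subsemigroup of $T_0$, hence equals $T_0$, giving $p^2=p$. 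Alongside this I would record that every minimal left ideal $I$ is closed and in fact equals $Sa$ for each $a\in I$ (as $Sa$ is a closed left ideal inside $I$), and that every non-empty closed left ideal contains a minimal left ideal (Zorn again). Part (1) is then immediate: a minimal left ideal $I$ is itself a non-empty compact left-continuous semigroup (it is closed, and $II\subseteq SI\subseteq I$), so it contains an idempotent.

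For part (2) the crucial observation is that $Ia=I$ for every $a\in I$ (the left ideal $Ia$ is non-empty and contained in $I$, so minimality applies); hence, writing $u\in I=Iv$, one gets $uv=u$ and symmetrically $vu=v$ for all $u,v\in J(I)$. Fixing $u\in J(I)$, I would verify that $uI$ is a group with identity $u$: $u$ acts as a left identity on $uI$, $uI$ is closed under multiplication ($(ux)(uy)=u(xuy)$), and each $z=ux\in uI$ has a left inverse in $uI$ --- indeed $Iz=I$ yields $wz=u$ for some $w\in I$, and then $uw\in uI$ with $(uw)z=u$ --- so the general fact ``left identity plus left inverses implies group'' applies. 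Covering $I$: for $a\in I$ the set $\{x\in I:xa=a\}$ is a non-empty closed subsemigroup, so by the idempotent lemma it contains an idempotent $u$ with $ua=a$, whence $a=ua\in uI$. Disjointness: if $a\in uI\cap vI$ then $va=a$, and denoting by $a^{-1}$ the inverse of $a$ in the group $uI$ we get $v=vu=v(aa^{-1})=(va)a^{-1}=aa^{-1}=u$.

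Part (3) is the technical core. For two idempotents $u,v$ in the \emph{same} minimal left ideal $I$, I would show $x\mapsto vx$ is a group isomorphism $uI\to vI$: it is a bijection with inverse $z\mapsto uz$ (using $uv=u$, $vu=v$), and it is multiplicative because $xvy=(xu)vy=x(uv)y=xuy=xy$ for $x,y\in uI$. For two \emph{different} minimal left ideals $I,I'$ and $u\in J(I)$, I would first produce a linked idempotent $v\in J(I')$ with $vu=u$ and $uv=v$: since $I'u=\rho_u(I')$ is a non-empty closed left ideal contained in $I$ it equals $I$, so $\{x'\in I':x'u=u\}$ is a non-empty closed subsemigroup of $S$ (closedness comes from continuity of $\rho_u$) and thus contains an idempotent $v$ with $vu=u$; then $uv$ is an idempotent of $I'$, and the identity $v(uv)=v$ (valid for idempotents of $I'$) combined with $v(uv)=(vu)v=uv$ forces $uv=v$. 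With $uv=v$, $vu=u$ in hand, $x\mapsto xv$ is a group isomorphism $uI\to vI'$ with inverse $z\mapsto zu$, multiplicative since $vy=y$ for $y\in uI$. Composing the two kinds of isomorphisms shows all the groups $uI$ are isomorphic. Finally, part (4) is routine: $Ia$ is a left ideal, and if $\emptyset\neq L\subseteq Ia$ is a left ideal then $\{x\in I:xa\in L\}$ is a non-empty left ideal contained in $I$, hence equals $I$, so $Ia\subseteq L$ and $L=Ia$.

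The step I expect to require the most care is part (3): pinning down which of the one-sided translations is simultaneously a bijection and a homomorphism between the group fibres, and, across distinct minimal left ideals, constructing the linked idempotent --- here one must be careful to phrase $\{x'\in I':x'u=u\}$ as $I'\cap\rho_u^{-1}(\{u\})$ so that left-continuity of $S$ makes it closed and the idempotent lemma becomes applicable. Parts (1), (2) and (4) should follow fairly mechanically once the idempotent lemma and the equality $Ia=I$ for $a\in I$ are in place.
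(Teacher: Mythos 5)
Your proof is correct: the idempotent lemma via Zorn, the identities $Sa=Ia=I$ for $a\in I$, the left-identity/left-inverse argument for $uI$, the linked idempotent $v\in J(I')$ with $vu=u$, $uv=v$, and the translation maps $x\mapsto vx$ and $x\mapsto xv$ all check out, and you use continuity on the correct side (right translations $\rho_p$, matching the paper's convention that $*$ is continuous in the first coordinate). The paper gives no proof of this Fact, citing Ellis and Ellis--Ellis--Nerurkar instead, and your argument is essentially the standard one from those sources, so there is nothing to add.
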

The groups $uI$ appearing in Fact \ref{1.1} are called the Ellis subgroups of $S$. $E(S(\A))$ is a compact left-continuous semigroup, hence Fact \ref{1.1} applies. We call its Ellis subgroups the Ellis groups of the flow $S(\A)$.

When $\A=\Po(G)$, then $S(\A)=\beta G$ is naturally isomorphic to $E(\beta G)$. This fact has been generalized in \cite{ne4} to some other $G$-algebras as follows.

\begin{definition}[\mbox{\cite[Definition 2.2]{ne4}}]\label{1.2}
(1) For $p\in S(\A)$ we define a function $d_p:\A\to\Po(G)$ by
$$d_p(A)=\{g\in G: g^{-1}A\in p\}$$
Clearly $d_p:\A\to \Po(G)$ is a homomorphism of $G$-algebras.\\
(2) We say that $\A$ is $d$-closed if $\A$ is closed under $d_p$ for every $p\in S(\A)$, that is $d_p[\A]\subseteq\A$. Notice that in this case $d_p$ belongs to $\End(\A)$, the semigroup of endomorphisms of $G$-algebra $\A$.\\
(3) If $\A$ is $d$-closed, then let $d:S(\A)\to \End(\A)$ be the function mapping $p$ to $d_p$.
\end{definition}
So the algebra $\Po(G)$ is $d$-closed. By \cite[Lemma 1.2]{ne2} also the $G$-algebra $\Def_{ext,G}(M)$ is $d$-closed. The next fact lists  the basic properties of $d$-closed $G$-algebras.\footnote{For every $G$-algebra $\A$ there is a unique smallest $d$-closed $G$-algebra $\A^d$ containing $\A$, called the $d$-closure of $\A$. We have that $S(\A^d)$ is isomorphic to $E(S(\A))$ as a $G$-flow and semigroup. This will be proved in a forthcoming paper.}

\begin{fact}[\mbox{\cite[Proposition 2.4]{ne4}}]\label{1.3} Assume $\A$ is $d$-closed.\\
(1) The function $d:S(\A)\to \End(\A)$ is a bijection.\\
(2) $d$ induces on $S(\A)$ a semigroup operation $*$ so that $d:S(\A)\to\End(\A)$ becomes an isomorphism of semigroups and for $p,q\in S(\A)$ we have $d_{p*q}=d_p\circ d_q$. Also for $A\in\A$ we have
$$A\in p*q\iff d_q(A)\in p$$
(3) For $p\in S(\A)$ let $l_p:S(\A)\to S(\A)$ be the left translation by $p$, that is $l_p(q)=p*q$. Then $E(S(\A))=\{l_p:p\in S(\A)\}$ and the function $l:S(\A)\to E(S(\A))$ mapping $p$ to $l_p$ is an isomorphism of $G$-flows and of semigroups. In particular, $(S(\A),*)$ is a compact left-continuous semigroup.
\end{fact}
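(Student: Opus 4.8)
The plan is to prove the three parts in turn, building everything on the ``evaluation at $e$'' identity
\[
A\in p\iff e\in d_p(A)\qquad (A\in\A,\ p\in S(\A)),
\]
which holds simply because $e\in d_p(A)\iff e^{-1}A=A\in p$. Since this recovers $p$ from $d_p$, the map $d$ is injective. For surjectivity, given $\varphi\in\End(\A)$ I would put $p_\varphi=\{A\in\A:e\in\varphi(A)\}$. As $\varphi$ is a homomorphism of Boolean algebras, $p_\varphi$ is at once seen to be an ultrafilter; and as $\varphi$ commutes with left translation (it is an endomorphism of the $G$-algebra $\A$),
\[
d_{p_\varphi}(A)=\{g\in G:e\in\varphi(g^{-1}A)\}=\{g\in G:e\in g^{-1}\varphi(A)\}=\varphi(A),
\]
so $d_{p_\varphi}=\varphi$. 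This proves (1).

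For (2): $d$-closedness guarantees $d_p,d_q\in\End(\A)$, hence $d_p\circ d_q\in\End(\A)$, so transporting $\circ$ along the bijection $d$ yields a binary operation $*$ on $S(\A)$ with $d_{p*q}=d_p\circ d_q$; with this definition $d$ is a semigroup isomorphism by fiat, and associativity of $*$ is inherited from that of $\circ$. The membership formula drops out of the evaluation-at-$e$ identity applied twice: for $r=p*q$ it gives $A\in p*q\iff e\in d_p(d_q(A))$, and for $p$ together with the element $d_q(A)\in\A$ it gives $e\in d_p(d_q(A))\iff d_q(A)\in p$.

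For (3) I would first record the elementary points: $G$, identified with the set of principal ultrafilters $g'=\{A\in\A:g\in A\}$, is dense in $S(\A)$; left translation by $g\in G$ is $l_{g'}$; and $l_g(e')=g'$, where $e'$ is the principal ultrafilter at $e$. The crux is the \textbf{Lemma}: if a net $(g_i)$ in $G$ satisfies $g_i'\to p$ in $S(\A)$, then $l_{g_i}\to l_p$ pointwise. To prove it, fix $q\in S(\A)$ and $A\in\A$: if $A\in p*q$ then $d_q(A)\in p$ by (2), and since $d_q(A)\in\A$ by $d$-closedness, the clopen neighbourhood $[d_q(A)]\cap S(\A)$ of $p$ eventually contains $g_i'$, i.e. eventually $g_i\in d_q(A)$, i.e. eventually $g_i^{-1}A\in q$, i.e. eventually $A\in l_{g_i}(q)$; the case $A\notin p*q$ follows by replacing $A$ by its complement. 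Hence $l_{g_i}(q)\to p*q=l_p(q)$. The Lemma shows $l_p\in E(S(\A))$ for every $p$. Conversely, if $f\in E(S(\A))$ is the pointwise limit of a net $l_{g_i}$, then evaluating at $e'$ gives $g_i'\to f(e')=:p$, so $l_{g_i}\to l_p$ by the Lemma, whence $f=l_p$ by uniqueness of limits in the Hausdorff space $S(\A)^{S(\A)}$. Thus $E(S(\A))=\{l_p:p\in S(\A)\}=l[S(\A)]$.

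It remains to identify $l$ as an isomorphism of all the relevant structures. It is injective since $l_p(e')=p$. Its continuity comes from computing the $l$-preimage of the subbasic open set $\{f:f(q)\in[A]\}$ to be $\{p:A\in p*q\}=[d_q(A)]\cap S(\A)$, which is clopen exactly because $d_q(A)\in\A$; the same computation shows $*$ is continuous in the first coordinate, so $(S(\A),*)$ is left-continuous. A continuous bijection from a compact space onto a Hausdorff space is a homeomorphism. It is a semigroup homomorphism because $l_p\circ l_q=l_{p*q}$ by associativity of $*$, and a morphism of $G$-flows because $g'*q=gq$ yields $l_{gq}=l_{g'}\circ l_q=l_g\circ l_q$. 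I expect the Lemma in (3) to be the only real obstacle: it is the single place where convergence in $S(\A)$, ultrafilters on $\A$ and the operation $*$ must be reconciled, and where $d$-closedness is genuinely indispensable (without it neither ``$d_q(A)\in p$'' nor ``$d_p\circ d_q\in\End(\A)$'' would even make sense); everything else is bookkeeping with Stone duality and the product topology.
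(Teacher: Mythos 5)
Your proof is correct and complete: the evaluation-at-$e$ identity handles (1) and the membership formula in (2), and the density lemma (principal ultrafilters $g'$ converging to $p$ force $l_{g_i}\to l_p$ pointwise) correctly identifies $E(S(\A))$ with $\{l_p:p\in S(\A)\}$, with $d$-closedness used exactly where it is needed, namely to make $[d_q(A)]$ a clopen neighbourhood and $p\mapsto p*q$ continuous. Note that the paper itself gives no proof of this statement --- it is quoted as a Fact from \cite[Proposition 2.4]{ne4} --- so there is nothing to diverge from; your argument is essentially the standard one from that source, reconstructed correctly.
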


When $\A$ is $d$-closed and $p\in S(\A)$, we associate with $d_p\in \End(\A)$ its kernel $\Ker d_p$ and image $\Img d_p$. Clearly $\Ker d_p$ is a $G$-ideal in $\A$ and $\Img d_p$ is a $G$-subalgebra of $\A$.

\begin{fact}[\cite{ne2,ne4}]\label{1.4} Assume $\A$ is $d$-closed.\\
(1) Assume $p,q\in S(\A)$. Then $\Ker d_p\subseteq \Ker d_q$ iff $cl(Gp)\supseteq cl(Gq)$.\\
(2) If $S$ is a subgroup of $S(\A)$, then the endomorphisms $d_p,p\in S$ have common kernel $K(S)=\Ker d_p$ and image $R(S)=\Img d_p$. $R(S)$ is a section of the family of $K(S)$-cosets in $\A$, hence $R(S)\cong \A / K(S)$. Also, every $d_p,p\in S$ restricted to $R(S)$ is a $G$-algebra automorphism of $R(S)$ and the mapping $p\mapsto d_p|_{R(S)}$ is a group monomorphism $S\to \Aut(R(S))\cong\Aut(\A/ K(S))$.\\
(3) Let $\K = \{\Ker d_p: p\in I\triangleleft_m S(\A)\}$ and $\R = \{\Img d_p:p\in I\triangleleft_m S(\A)\}$. Then the mapping $uI\mapsto (K(uI), R(uI))$ is a bijection between the Ellis subgroups of $S(\A)$ and the product $\K\times\R$. Moreover, $K(uI)=K(u'I')$ iff $I=I'$ and for every $I\triangleleft_mS(\A)$ and $R\in\R$ there is a unique $u\in J(I)$ with $R=R(uI)$.
\end{fact}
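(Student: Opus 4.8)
The plan is to treat the three parts in turn, using throughout the identification of $(S(\A),*)$ with its Ellis semigroup from Fact~\ref{1.3} and the standard description of the subflow generated by a point: for $p\in S(\A)$, $cl(Gp)=S(\A)*p$. (This set is closed, being the image of the compact space $S(\A)$ under the continuous map $q\mapsto q*p$; it is a left ideal; it contains $Gp$; and it is the closure of $Gp$ since the principal ultrafilters are dense in $S(\A)$.) For (1) I would first describe $\Ker d_p$ topologically. The basic clopen subsets of the Stone space $S(\A)$ are exactly the sets $[A]\cap S(\A)$, $A\in\A$, and $d_p(A)=\emptyset$ unwinds to ``$g^{-1}A\notin p$ for every $g\in G$'', i.e.\ $A\notin g\cdot p$ for all $g$, i.e.\ $[A]\cap Gp=\emptyset$, which (as $[A]$ is clopen) is the same as $[A]\cap cl(Gp)=\emptyset$. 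Hence $\Ker d_p=\{A\in\A:[A]\cap cl(Gp)=\emptyset\}$, so $\Ker d_p\subseteq\Ker d_q$ says that every clopen missing $cl(Gp)$ also misses $cl(Gq)$; since clopens separate points from closed sets in a Stone space, this is equivalent to $cl(Gq)\subseteq cl(Gp)$.

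For (2), observe that $d$ restricted to $S$ is a restriction of the semigroup isomorphism of Fact~\ref{1.3}(2), so $d[S]$ is a subgroup of $\End(\A)$ with identity $d_u$, $u$ the identity of $S$; in particular $d_u$ is an idempotent $G$-algebra endomorphism. I would then invoke the elementary fact that members of a subgroup of a monoid of maps all have the same image and kernel as the idempotent identity: for $p\in S$, the relations $d_p=d_u\circ d_p=d_p\circ d_u$ and $d_u=d_{p^{-1}}\circ d_p=d_p\circ d_{p^{-1}}$ (inverses in $S$) force $\Img d_p=\Img d_u=:R(S)$ and $\Ker d_p=\Ker d_u=:K(S)$. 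Since $d_u$ is an idempotent Boolean homomorphism, $d_u(A)\in R(S)$ and $A\triangle d_u(A)\in K(S)$ for every $A$ (apply $d_u$), while two elements of $R(S)$ lying in one $K(S)$-coset are fixed by $d_u$ yet identified by it, hence equal; so $R(S)$ is a section of the $K(S)$-cosets and the quotient map restricts to a $G$-algebra isomorphism $R(S)\cong\A/K(S)$. Finally $d_p\circ d_u=d_{p*u}=d_p$ gives $d_p(R(S))=\Img d_p=R(S)$, and $\Ker d_p\cap R(S)=K(S)\cap R(S)=\{\emptyset\}$, so $d_p|_{R(S)}\in\Aut(R(S))$; the map $p\mapsto d_p|_{R(S)}$ is multiplicative since $d_q(R(S))=R(S)$, and injective since $d_p|_{R(S)}=\mathrm{id}$ forces $d_p=d_p\circ d_u=d_u$, i.e.\ $p=u$.

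For (3) I would first record that for $u\in I\triangleleft_m S(\A)$ one has $cl(Gu)=S(\A)*u=I$: indeed $S(\A)*u\subseteq I$ as $I$ is a left ideal containing $u$, while $I*u$ is a minimal left ideal (Fact~\ref{1.1}(4)) inside the minimal ideal $I$, hence equal to it, so $I=I*u\subseteq S(\A)*u$. Combined with (1), this shows that for $u\in J(I)$ the kernel $\Ker d_u$ depends only on $I$ — call it $K_I$ — and that $K_I$ determines $I$. Since every point of a minimal left ideal lies in some group $uI$, part~(2) gives $\K=\{K_I:I\triangleleft_m\}$, $\R=\{R(uI):I\triangleleft_m,\ u\in J(I)\}$, and $\Phi:uI\mapsto(K(uI),R(uI))$ maps into $\K\times\R$. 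For injectivity of $\Phi$: $K(uI)=K(u'I')$ means $K_I=K_{I'}$, whence $I=I'$; then $R(uI)=R(u'I)$ forces $d_u(A)=d_{u'}(A)$ for all $A$ (both lie in the common image and in a single $K_I$-coset, and that image is a section), so $u=u'$. The same computation yields uniqueness in the second ``moreover'' clause. It therefore remains to prove surjectivity of $\Phi$, i.e., given $I\triangleleft_m$ and $R=\Img d_v\in\R$ with $v$ an idempotent of some $I'\triangleleft_m$, to produce $u\in J(I)$ with $\Img d_u=R$.

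I expect this last existence step to be the main obstacle, because the usual argument via a minimal right ideal is unavailable here (minimal right ideals need not exist in a merely left-continuous semigroup). I plan a direct semigroup transfer instead. Since $I*v$ is a minimal left ideal contained in the left ideal $I'$, it equals $I'$, so $w*v=v$ for some $w\in I$; taking the idempotent $u_0\in J(I)$ with $w\in u_0I$ (so $u_0*w=w$), associativity gives $u_0*v=(u_0*w)*v=v$. Then $u:=v*u_0$ is idempotent ($(v*u_0)*(v*u_0)=v*(u_0*v)*u_0=v*v*u_0=v*u_0$), lies in $I$ (a left ideal, $u_0\in I$), and satisfies $u*v=v*(u_0*v)=v$ and $v*u=(v*v)*u_0=v*u_0=u$. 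Applying $d$: $u*v=v$ yields $d_v=d_u\circ d_v$, so $\Img d_v\subseteq\Img d_u$; $v*u=u$ yields $d_u=d_v\circ d_u$, so $\Img d_u\subseteq\Img d_v$; hence $\Img d_u=R$ with $u\in J(I)$, as required. The remaining points needing care are the verification in (2) that $R(S)$ is genuinely a section of the $K(S)$-cosets, and the bookkeeping about which idempotent governs which Ellis subgroup.
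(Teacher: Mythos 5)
Your argument is correct and complete. One caveat about the comparison you ask for: the paper states Fact \ref{1.4} without proof, citing [N2] and [N4], so there is no in-paper argument to measure you against; judged on its own, your derivation from Facts \ref{1.1} and \ref{1.3} goes through. Parts (1) and (2) are the standard computations: $\Ker d_p=\{A\in\A:[A]\cap cl(Gp)=\emptyset\}$ together with the separation of a point from a closed set by a clopen $[A]$, and, for a subgroup $S$ with identity $u$, the relations $d_p=d_u d_p=d_p d_u$, $d_u=d_{p^{-1}}d_p=d_p d_{p^{-1}}$ forcing common kernel and image, with $A\triangle d_u(A)\in K(S)$ (idempotency) giving that $R(S)=\Img d_u$ is a section of the $K(S)$-cosets. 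The only genuinely delicate point is the surjectivity half of (3), where, as you note, one cannot argue via minimal right ideals; your construction is the right one: from $I*v=I'$ pick $w\in I$ with $w*v=v$, take $u_0\in J(I)$ with $u_0*w=w$, and set $u=v*u_0$, obtaining an idempotent $u\in J(I)$ with $u*v=v$ and $v*u=u$, whence $\Img d_u=\Img d_v$; this pair of ``equivalent'' idempotents is exactly the classical device underlying the cited results (it also recovers the uniqueness clause, since two idempotents of $J(I)$ with equal kernel and image must have $d_u=d_{u'}$ because both $d_u(A)$ and $d_{u'}(A)$ lie in the common image and in the $K_I$-coset of $A$, and $d$ is injective). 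Your use of $cl(Gp)=S(\A)*p$ and $cl(Gp)=I$ for $p\in I\triangleleft_m S(\A)$ is likewise justified by density of the principal ultrafilters and left-continuity, so no gaps remain.
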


We shall need a characterization of the algebras from $\R$. We say that $A\in \A$ is generic (in $G$) if some finitely many left translates of $A$ cover $G$. We say that a point $p\in S(\A)$ is generic if every set in $p$ is generic. We say that a $G$-subalgebra $R$ of $\A$ is generic if every non-empty element of $R$ is a generic subset of $G$. We say that $A\in\A$ is strongly generic if it generates a generic $G$-subalgebra of $\A$.

\begin{fact}[\cite{ne3,ne4}] \label{1.5} Assume $\A$ is $d$-closed. The $G$-algebras in $\R$ are exactly the maximal generic $G$-subalgebras of $\A$.
\end{fact}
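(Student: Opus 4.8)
The statement is an equivalence, and the plan is to derive it from two claims: (A) every $R\in\R$ is a maximal generic $G$-subalgebra of $\A$, and (B) every generic $G$-subalgebra of $\A$ is contained in some member of $\R$. Granting these, the Fact follows at once: (A) gives $\R\subseteq\{\text{maximal generic }G\text{-subalgebras}\}$, while if $R$ is maximal generic then (B) puts it inside some $R'\in\R$, which is generic by (A), whence $R=R'\in\R$. Two observations will be used throughout. (i) For a $G$-subalgebra $R\subseteq\A$, the flow $S(R)$ is minimal iff every non-empty $A\in R$ is generic in $G$: if $g_1A\cup\dots\cup g_nA=G$ then $l_{g_1}[A]\cup\dots\cup l_{g_n}[A]=[g_1A]\cup\dots\cup[g_nA]=[G]=S(R)$, so every non-empty clopen of $S(R)$ has finitely many translates covering it, which forces minimality; conversely in a minimal $S(R)$ every non-empty $[A]$ has finitely many translates covering $S(R)$, and $[g_1A\cup\dots\cup g_nA]=[G]$ forces $g_1A\cup\dots\cup g_nA=G$. (ii) For $u\in J(I)$ with $I\triangleleft_m S(\A)$ (indeed for any $z\in I$) one has $cl(Gz)=S(\A)*z=I$; hence $I$ is a minimal subflow, and $\Ker d_u=\{A\in\A:[A]\cap I=\emptyset\}$. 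Observation (ii) rests on the continuity of the right translations $q\mapsto q*p$ of $S(\A)$ (immediate from Fact~\ref{1.3}(2), since $\{q:A\in q*p\}=S(\A)\cap[d_p(A)]$) together with the density in $S(\A)$ of the points $p$ with $l_p=l_g$ for some $g\in G$ (from Fact~\ref{1.3}(3)): these give $cl(Gz)=S(\A)*z$, and $S(\A)*z=I$ by minimality of the left ideal $I$.

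For (A), fix $I\triangleleft_m S(\A)$, $u\in J(I)$ and set $R=\Img d_u=R(uI)$. \emph{Genericity.} Since $u*u=u$, Fact~\ref{1.3}(2) gives $d_u=d_u\circ d_u$, so $d_u$ retracts $\A$ onto $R$ and hence $R\cong\A/\Ker d_u$ as $G$-algebras (Fact~\ref{1.4}(2)). Dualizing the quotient $\A\twoheadrightarrow\A/\Ker d_u$, the flow $S(R)$ is isomorphic, as a $G$-flow, to the closed $G$-invariant set of ultrafilters of $\A$ disjoint from the ideal $\Ker d_u$, which (as $d_u(A)\ne\emptyset\iff[A]\cap Gu\ne\emptyset$) is $cl(Gu)=I$. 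So $S(R)$ is a minimal flow, and by (i) every non-empty element of $R$ is generic. \emph{Maximality.} Suppose $R\subseteq R'$ with $R'$ a generic $G$-subalgebra. A non-empty $B\in\Ker d_u$ satisfies $[B]\cap I=\emptyset$, so $B$ misses the minimal subflow $I$ and is not generic; hence $R'\cap\Ker d_u=\{\emptyset\}$. As $R$ meets every $\Ker d_u$-coset, any $A\in R'$ differs by symmetric difference from its representative $A_0\in R\subseteq R'$ by an element $A\triangle A_0\in R'\cap\Ker d_u=\{\emptyset\}$, i.e.\ $A=A_0\in R$. Thus $R'=R$.

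For (B), the key step is the identity, valid for any $p\in S(\A)$ and any $G$-subalgebra $R$:
$$R\subseteq\Img d_p\iff d_p|_R=\mathrm{id}_R\iff p\cap R=\{A\in R:e\in A\}=:p_e^R.$$
Here $d_p(A)=A$ unravels to ``$g\in A\iff g^{-1}A\in p$ for all $g\in G$''; taking $g=e$ yields the last condition, and conversely, assuming it, for $A\in R$ and $g\in G$ one applies it to $B:=g^{-1}A\in R$ to get $g^{-1}A\in p\iff e\in g^{-1}A\iff g\in A$. Now let $R$ be any generic $G$-subalgebra, so $S(R)$ is minimal by (i). Pick any $I\triangleleft_m S(\A)$; the restriction map $\rho\colon S(\A)\to S(R)$, $p\mapsto p\cap R$, is a continuous $G$-map, so $\rho(I)$ is a non-empty subflow of the minimal flow $S(R)$, i.e.\ $\rho(I)=S(R)$. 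Choosing $q\in I$ with $\rho(q)=p_e^R$ gives $R\subseteq\Img d_q$. Finally $\Img d_q\in\R$: writing $u\in J(I)$ for the identity of the group $uI\ni q$, from $q=u*q$ we get $d_q=d_u\circ d_q$, so $\Img d_q\subseteq\Img d_u=R(uI)$, while $d_q$ restricts to an automorphism of $R(uI)$ by Fact~\ref{1.4}(2), forcing $\Img d_q=R(uI)$. This proves (B).

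The step I expect to be the main obstacle is (B), and within it the realization that the a priori strong condition ``$R\subseteq\Img d_p$'' collapses to the single requirement $p\cap R=p_e^R$ on the restriction of $p$ to $R$; once this is isolated, surjectivity of $\rho$ onto the minimal flow $S(R)$ closes the argument. In (A) the delicate point is the identification of $S(\Img d_u)$ with the minimal subflow $I$ (which rests on observation (ii), i.e.\ continuity of right translations and density of the translation orbit in $S(\A)$, both cheap consequences of Fact~\ref{1.3}), together with the routine verification that every map in sight is $G$-equivariant.
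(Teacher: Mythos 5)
The paper does not prove Fact \ref{1.5} at all --- it is quoted from \cite{ne3,ne4} --- so there is no internal argument to compare yours against; judged on its own, your proof is correct and essentially self-contained. Both halves work: in (A), the identification of $\{p\in S(\A):p\cap\Ker d_u=\emptyset\}$ with $cl(Gu)=I$ and the section-of-cosets argument for maximality are sound; in (B), surjectivity of the restriction $\rho:I\to S(R)$ onto the minimal flow $S(R)$ produces $q\in I$ with $q\cap R=p_e^R$, hence $d_q|_R=\mathrm{id}_R$ and $R\subseteq\Img d_q$, and $\Img d_q\in\R$ is then immediate from the definition of $\R$ (your closing appeal to Facts \ref{1.1}(2) and \ref{1.4}(2), giving $\Img d_q=\Img d_u$ for $q\in uI$, is also exactly what is needed to see that proving (A) for idempotents $u$ covers all of $\R$).

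One caveat: the first biconditional in your ``key identity'' is false as stated. $R\subseteq\Img d_p$ does not imply $d_p|_R=\mathrm{id}_R$: for $p=\hat g$ the map $d_{\hat g}$ is right translation by $g^{-1}$, which is surjective on $\A$ but not the identity on $R$ unless every member of $R$ is right $g^{-1}$-invariant; more to the point, for a non-idempotent $p\in uI$ the restriction $d_p|_{R(uI)}$ is in general a nontrivial automorphism of $R(uI)=\Img d_p$ (Fact \ref{1.4}(2)), which contradicts the claimed equivalence with $R=R(uI)$. Fortunately you only ever use the valid implications $p\cap R=p_e^R\iff d_p|_R=\mathrm{id}_R\Rightarrow R\subseteq\Img d_p$, so the argument is unaffected; just weaken that first ``$\iff$'' to an implication from right to left.
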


{\bf The combinatorial set-up} From now on in this paper usually we assume $G$ is an infinite group (possibly with an additional first order structure) and $L_G$ is the language of $G$. Also we assume that $G\prec H$. $\Def(G)$ denotes the $G$-algebra of subsets of $G$ definable by formulas of $L_G$, using parameters from $G$.

Given $A\in\Def(G)$ definable by an $L_G$-formula $\varphi(x)$, let $A^{\#}$ be the subset of $H$ defined by $\varphi(x)$. Throughout, $\A$ denotes a $d$-closed $G$-algebra contained in $\Def(G)$ and $\B$ a $d$-closed $H$-algebra containing $A^{\#}$ for all $A\in\A$. We also assume that $\B|_G:=\{B\cap G:B\in \B\}$ equals $\A$.

Clearly, the function $\mbox{}^{\#}:\A\to\B$ mapping $A$ to $A^{\#}$ is a Boolean algebra monomorphism respecting left translation by elements of $G$. So we have a dual continuous surjection $r:S(\B)\to S(\A)$, called restriction. When $r(q)=p$, we say that $q$ extends $p$ and write $p\subseteq q$.

\begin{remark}\label{1.6} For every $q\in S(\B)$ and $A\in \A$ we have that $d_qA^{\#}\cap G=d_{r(q)}A$.
\end{remark}

{\bf The model theoretic set-up} This set-up appeared already in Introduction. So we have an $L$-structure $M$ and $T=Th(M)$. $G=G(M)$ is an infinite group $0$-definable in $M$ and we are interested in the $G$-flow $S_{ext,G}(M)$ of external $G$-types over $M$. We show how to reduce this set-up to the combinatorial one described above. We proceed as in \cite[Section 2]{ne2}.

For every $U\in \Def_{ext}(M)$ let $P_U$ be a new relational symbol and let $L_{ext,M}= L\cup\{P_U:U\in\Def_{ext}(M)\}$. Let $M_{ext}$ be the expansion of $M$ to an $L_{ext,M}$-structure, where $P_U$ is interpreted as $U$.

To compare topological dynamics of flows $S_{ext,G}(M)$ for various models $M$ of $T$ we need to consider not just elementary, but rather $*$-elementary extensions. We say that an $L$-structure $N$ is a $*$-elementary extension of $M$ (and write $M\prec^* N$) if $M\prec N$ and the new relational symbols of $L_{ext,M}$ are identified with some relational symbols of $L_{ext,N}$ so that $M_{ext}\prec N_{ext}|_{L_{ext,M}}$.

It is easy to construct $*$-elementary extensions of $M$. Namely, let $N^0$ be any elementary extension of $M_{ext}$ and let $N=N^0|_L$. For $U\in \Def_{ext}(M)$ let $U^N=P_U(N^0)$. By \cite[Lemma 2.1]{ne2}, $U^N\in \Def_{ext}(N)$, so the identification of $P_U$ with $P_{U^N }$ witnesses that $M\prec^* N$.

Assume $M\prec^* N$ and let $H=G(N)$. Then $G\prec H$, where $G$ and $H$ are considered with the structure induced from $M_{ext}$ and $N_{ext}|_{L_{ext,M}}$, respectively. Let $\A=\Def_{ext,G}(M)$ and $\B=\Def_{ext,G}(N)$. For $A\in\A$ let $A^{\#}= P_A(N)$. By \cite[Lemma 1.2]{ne2}, $G,H,\A$ and $\B$ satisfy the assumptions of combinatorial set-up. So the results on $S(\A)$ and $S(\B)$ in the combinatorial set-up will be valid also in the model theoretic set-up.

\section{Weak heirs and weak coheirs}
We work in the combinatorial set-up.
\begin{definition}\label{2.1} Assume $p\in S(\A)$ and $q\in S(\B)$.\\
(1) $q$ is a weak heir of $p$ if $d_qA^{\#} = (d_pA)^{\#}$ for every $A\in \A$.\\
(2) $q$ is a weak coheir of $p$ if for every $A\in\A$ and $s\in S(\B)$ we have that
$$d_s A^{\#}\in q\iff d_sA^{\#}\cap G\in p$$
\end{definition}
In (2) ``$\iff$" may be equivalently replaced by ``$\Rightarrow$" and also by Remark \ref{1.6}, $d_sA^{\#}\cap G=d_{r(s)}A$. In the model-theoretic set-up the definition of weak heir appeared already in \cite{ne2}.

\begin{lemma}\label{2.2} Assume $q$ is a weak heir or weak coheir of $p$. Then $p\subseteq q$.
\end{lemma}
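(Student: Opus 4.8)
The plan is to show that for every $A\in\A$ one has $A\in p\iff A^\#\in q$; this is exactly the assertion that $r(q)=p$, i.e. $p\subseteq q$. The one tool needed throughout is that $d_{(-)}$ "sees" membership at the identity: for any $p'\in S(\A)$ and $A\in\A$ we have $e\in d_{p'}A\iff A\in p'$ (since $e^{-1}A=A$), and likewise $e\in d_{q'}B\iff B\in q'$ for $q'\in S(\B)$ and $B\in\B$. I also record the elementary fact that $A^\#\cap G=A$ for $A\in\A$, and more generally, for $C\in\A$, that $e\in C\iff e\in C^\#$: both hold because $C$ and $C^\#$ are defined by the same $L_G$-formula and $G\prec H$.

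First suppose $q$ is a weak heir of $p$ and fix $A\in\A$. By $d$-closure of $\A$ we have $d_pA\in\A$, so the observation above applies with $C=d_pA$, giving $e\in d_pA\iff e\in(d_pA)^\#$. Now chain equivalences: $A\in p\iff e\in d_pA\iff e\in(d_pA)^\#\iff e\in d_qA^\#\iff A^\#\in q$, where the middle step uses the weak-heir identity $d_qA^\#=(d_pA)^\#$. Hence $A\in p\iff A^\#\in q$, as required.

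Next suppose $q$ is a weak coheir of $p$. The key point is to apply Definition~\ref{2.1}(2) with the right choice of $s\in S(\B)$, namely the identity element of the semigroup $(S(\B),*)$. Concretely, let $s_0=\{B\in\B:e\in B\}$, an ultrafilter in $\B$; a direct computation gives $d_{s_0}B=\{h\in H:h^{-1}B\in s_0\}=\{h\in H:h\in B\}=B$ for every $B\in\B$, so $d_{s_0}=\mathrm{id}_{\B}$. Taking $s=s_0$ in the defining condition, for arbitrary $A\in\A$ we get $A^\#\in q\iff A^\#\cap G\in p$, and since $A^\#\cap G=A$ this reads $A^\#\in q\iff A\in p$. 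So again $r(q)=p$.

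I do not expect a real obstacle here: once one notices that evaluating $d_{(-)}$ at $e$ reduces everything to a statement about the single element $e$, and that the identity endomorphism $\mathrm{id}_{\B}$ is realized by a point of $S(\B)$ (the point to be substituted into the weak-coheir clause), both cases are immediate. The only place demanding a little care is invoking $G\prec H$ correctly — to obtain $A^\#\cap G=A$ and, in the heir case, to move between $e\in d_pA$ and $e\in(d_pA)^\#$.
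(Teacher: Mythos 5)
Your proof is correct and follows essentially the same route as the paper: both arguments reduce everything to evaluation at the identity $e$ (via $e\in d_pA\iff A\in p$ and $A^{\#}\cap G=A$) for the weak heir case, and substitute the point $\hat e=\{B\in\B:e\in B\}$, whose $d$-map is the identity on $\B$, into the weak coheir clause. The only cosmetic difference is that you argue directly while the paper argues by contraposition.
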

\begin{proof}. Suppose that $p\not\subseteq q$. Hence for some $A\in p$ we have that $A^{\#}\not\in q$. Therefore $e\in d_pA$ and $e\not\in  d_qA^{\#}$.  So also $e\in (d_pA)^{\#}$ and $(d_pA)^{\#}\neq d_qA^{\#}$, showing that $q$ is not a weak heir of $p$.

Let $\hat{e}=\{B\in\B: e\in \B\}$. So $\hat{e}\in S(\B)$. We have that $d_{\hat{e}}A^{\#}=A^{\#}\not\in q$ and $d_{\hat{e}}A^{\#}\cap G=d_{r(\hat{e})}A=A\in p$, so $q$ is not a weak coheir of $p$.
\end{proof}

By Lemma \ref{2.2}, in Definition \ref{2.1} if $q$ is a weak heir or weak coheir of $p$, then $p=r(q)$. Therefore in this situation we say also that $q$ is a weak [co]heir over $\A$. Regarding Definition \ref{2.1}(2), $d_{r(s)}A\in r(q)$ is equivalent to $(d_{r(s)}A)^{\#}\in q$.

In the model theoretic set-up, when $p\in S_{ext,G}(M)$ and $q\in S_{ext,G}(N)$ is a weak [co]heir of $p$, we say that $q$ is a weak [co]heir over $N$.

The next lemma provides an alternative definition of weak heir and weak coheir.
\begin{lemma}\label{2.3} Assume $p\in S(\A)$ and $q\in S(\B)$.\\
(1) $q$ is a weak heir of $p$ iff for every $A,B\in \A$ and $h\in H$, if $h\in B^{\#}$ and $h^{-1}A^{\#}\in q$, then for some $g\in G$, $g\in B$ and $g^{-1}A\in p$.\\
(2) $q$ is a weak coheir of $p$ iff $p=r(q)$ and for every $A,B\in\A$ and $s\in S(\B)$, if $d_sA^{\#}\cap B^{\#}\in q$, then $d_sA^{\#}\cap B^{\#}\cap G\neq\0$.
\end{lemma}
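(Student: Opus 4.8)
The plan is to derive each clause directly from the definitions, from the Boolean-algebra properties of $d_p$, $d_q$ and the embedding $\mbox{}^{\#}$, and from Remark~\ref{1.6}; the only place where more than Boolean bookkeeping enters is the elementarity $G\prec H$. I would first record two facts used repeatedly: for $h\in H$ the collection $p_h:=\{A\in\A: h\in A^{\#}\}$ is an ultrafilter in $\A$ (it is $r(\hat h)$, where $\hat h\in S(\B)$ is the ultrafilter determined by $h$); and, since $G\prec H$, we have $A^{\#}\cap G=A$ and $(A\cap B)^{\#}=A^{\#}\cap B^{\#}$ for $A,B\in\A$, and a set defined over $G$ is nonempty in $G$ iff its $\mbox{}^{\#}$-version is nonempty in $H$. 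Also $d_p$, $d_q$ and $\mbox{}^{\#}$ commute with complementation.

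For (1), direction $\Rightarrow$: assuming $q$ is a weak heir of $p$, $h\in B^{\#}$ and $h^{-1}A^{\#}\in q$, we get $h\in d_qA^{\#}=(d_pA)^{\#}$, hence $h\in(d_pA\cap B)^{\#}$; since $d_pA\cap B\in\A\subseteq\Def(G)$, elementarity gives a point $g\in G$ of $d_pA\cap B$, and then $g\in B$ and $g^{-1}A\in p$. Direction $\Leftarrow$: given $h\in d_qA^{\#}$, the hypothesis says $d_pA\cap B\neq\0$ for every $B\in p_h$; as $p_h$ is an ultrafilter this forces $d_pA\in p_h$, i.e.\ $h\in(d_pA)^{\#}$. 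So $d_qA^{\#}\subseteq(d_pA)^{\#}$ for every $A\in\A$; applying this inclusion with $A$ replaced by $G\setminus A$ and using that $d_q$, $d_p$ and $\mbox{}^{\#}$ commute with complementation yields the reverse inclusion, so $q$ is a weak heir of $p$.

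For (2), direction $\Rightarrow$: Lemma~\ref{2.2} gives $p=r(q)$; if $d_sA^{\#}\cap B^{\#}\in q$ then $d_sA^{\#}\in q$ and $B^{\#}\in q$, so by the weak coheir condition $d_{r(s)}A=d_sA^{\#}\cap G\in p$ (Remark~\ref{1.6}), while $B=B^{\#}\cap G\in p$, hence $d_{r(s)}A\cap B\in p$ is nonempty — and this set equals $d_sA^{\#}\cap B^{\#}\cap G$ by Remark~\ref{1.6}. Direction $\Leftarrow$: assume $p=r(q)$ and the displayed condition; by the remark after Definition~\ref{2.1} it suffices to show $d_sA^{\#}\in q\Rightarrow d_sA^{\#}\cap G\in p$. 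If $d_sA^{\#}\in q$ but $d_{r(s)}A=d_sA^{\#}\cap G\notin p$, put $B:=G\setminus d_{r(s)}A\in p=r(q)$; then $B^{\#}\in q$, so $d_sA^{\#}\cap B^{\#}\in q$, yet $d_sA^{\#}\cap B^{\#}\cap G=d_{r(s)}A\cap(G\setminus d_{r(s)}A)=\0$ by Remark~\ref{1.6}, contradicting the displayed condition. Hence $d_{r(s)}A\in p$, as required.

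The argument is almost entirely bookkeeping with Boolean identities and Remark~\ref{1.6}; the single indispensable ingredient is the transfer of a witness from $H$ down to $G$ through $G\prec H$ in (1)$\Rightarrow$. The one point needing care is the role of the auxiliary parameter $B$: in (1) it serves to approximate the ultrafilter $p_h$ of $h$, while in (2) the concrete choice $B=G\setminus d_{r(s)}A$ is precisely what converts the ``nonempty trace'' hypothesis into the membership $d_{r(s)}A\in p$. Once these reductions to ultrafilter membership are set up correctly, nothing deeper is required.
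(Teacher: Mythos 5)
Your proof is correct and follows essentially the same route as the paper's: the elementarity transfer of a witness from $H$ to $G$ in (1)$\Rightarrow$, and the choices $B=G\setminus d_pA$ (hidden inside your ultrafilter argument for $p_h$) and $B=G\setminus d_{r(s)}A$ in the converse directions, are exactly the paper's key steps. The only cosmetic differences are that you phrase (1)$\Leftarrow$ as a direct inclusion plus complementation rather than a proof by contradiction, and that you invoke the remark after Definition~\ref{2.1} to reduce (2)$\Leftarrow$ to one implication instead of using the symmetric difference.
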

\begin{proof}. (1)$\Rightarrow$ Assume $q$ is a weak heir of $p$. By Lemma \ref{2.2}, $p=r(q)$. Take any $A,B\in\A$ and $h\in H$ satisfying  $h\in B^{\#}$ and $h^{-1}A^{\#}\in q$. Then
$$h\in d_qA^{\#}\cap B^{\#}=(d_{r(q)}A\cap B)^{\#}$$
Since $G\prec H$, there is $g\in G$ with $g\in d_{r(q)}A\cap B$. It follows that $g\in B$ and $g^{-1}A\in r(q)$, as required.

$\Leftarrow$ Suppose for contradiction that the right hand side condition holds and $q$ is not a weak heir of $p$. Hence $(d_pA)^{\#}\neq d_qA^{\#}$ for some $A\in\A$. Replacing $A$ with $G\setminus A$ if necessary, we can assume that there is $h\in d_qA^{\#}\setminus (d_pA)^{\#}$.

Let $B=G\setminus d_pA$. Then $h\in B^{\#}$ and $h^{-1}A^{\#}\in q$, so  by our assumptions there is $g\in G$ satisfying $g\in B$ and $g^{-1}A\in p$. It follows that $g\in B$ and $g\in d_pA$, a contradiction.

(2)$\Rightarrow$ Assume $q$ is a weak coheir of $p$. By Lemma \ref{2.2}, $p=r(q)$. Take any $A,B\in\A$ and $s\in S(\B)$ with $d_sA^{\#}\cap B^{\#}\in q$. By the definition of weak coheir we get that
$$(d_{r(s)}A\cap B)\in p,$$
hence $d_{r(s)}A\cap B\neq\0$. So the right hand side condition holds.

$\Leftarrow$ Suppose for contradiction that the right hand side condition holds and $q$ is not a weak coheir of $p=r(q)$. hence for some $A\in\A$ and $s\in S(\B)$ we have
$$(d_{r(s)}A)^{\#}\triangle d_sA^{\#}\in q$$
Replacing $A$ with $G\setminus A$ if necessary, we can assume that $d_sA^{\#}\setminus(d_{r(s)}A)^{\#}\in q$. Let $B=G\setminus d_{r(s)}A$. Then $d_sA^{\#}\cap B^{\#}\in q$, but $d_sA^{\#}\cap B^{\#}\cap G=d_{r(s)}A\cap B=\0$, a contradiction.
\end{proof}

We define coheirs in the standard way.
\begin{definition}\label{2.4} (1) Let $q\in S(\B)$. We say that $q$ is a coheir over $\A$ if $B\cap G\neq\0$ for every $B\in q$.\\
(2) For $p\in S(\A)$ let
$$p^{\B}=\{B\in \B: B\cap G\in p\}$$
So $p^{\B}\in S(\B)$ is a coheir over $\A$ extending $p$.
\end{definition}
\begin{remark}\label{2.5} Assume $q\in S(\B)$. Then $q$ is a coheir over $\A$ iff $q=p^{\B}$, where $p=r(q)$. In particular every ultrafilter $p\in S(\A)$ has a unique coheir extension $p^{\B}$ in $S(\B)$ and the function $p\mapsto p^{\B}$ is a continuous embedding $S(\A)\to S(\B)$.
\end{remark}
So if $q\in S(\B)$ is a coheir over $\A$, we say also that $q$ is a coheir extension of $p=r(q)$.

We can identify formulas $\varphi(x)$ of $L_G$ with parameters from $H$ with the subsets of $H$ they define. For $A,B\in\A$ let $\varphi_{A,B}(x,y)$ be a formula of $L_G$ (with parameters from $G$) saying that ``$x\in y^{-1}A\land y\in B$". The condition in Lemma \ref{2.3}(1)  characterizing weak heirs says that if there is $h\in H$ with $\varphi_{A,B}(x,h)\in q$, then there is $g\in G$ with $\varphi_{A,B}(x,g)\in q$. This shows that the notion of weak heir is indeed a weakening of the notion of heir. Similarly by Lemmas \ref{2.5} and \ref{2.3}(2) we see that every coheir is a weak coheir. Consequently we get the following lemma.

\begin{lemma}\label{2.6} Every $p\in S(\A)$ extends to a weak heir and to a weak coheir in $S(\B)$.
\end{lemma}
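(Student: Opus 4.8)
The plan is to construct the two extensions separately. The weak coheir is essentially free from the discussion preceding the statement: take $q:=p^{\B}$ from Definition \ref{2.4}(2). By Remark \ref{2.5} this is a coheir over $\A$ with $r(q)=p$, and every coheir is a weak coheir; concretely, if $A,B\in\A$ and $s\in S(\B)$ satisfy $d_sA^{\#}\cap B^{\#}\in q$ (a set lying in $\B$ since $\B$ is $d$-closed and an algebra), then being a coheir gives $d_sA^{\#}\cap B^{\#}\cap G\neq\0$, which is exactly the condition in Lemma \ref{2.3}(2).

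For the weak heir I would produce $q$ directly as an ultrafilter on $\B$. Put
$$\F=\{\,h^{-1}A^{\#}\ :\ A\in\A,\ h\in H,\ h\in(d_pA)^{\#}\,\},$$
and note $\F\subseteq\B$ since $A^{\#}\in\B$ and $\B$ is closed under left translation by $H$. The plan is: (a) show $\F$ has the finite intersection property, so that it extends to some $q\in S(\B)$; (b) check that this $q$ satisfies $d_qA^{\#}=(d_pA)^{\#}$ for all $A\in\A$, i.e.\ is a weak heir of $p$ in the sense of Definition \ref{2.1}(1).

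Step (a) is the heart of the argument and the step I expect to be the main (if minor) obstacle; it is where the elementarity $G\prec H$ enters. Given $h_i^{-1}A_i^{\#}\in\F$ for $i=1,\dots,n$, fix $L_G$-formulas $\varphi_i(x)$, $\psi_i(x)$ over $G$ defining $A_i$ and $d_pA_i$. The hypothesis ``$h_i\in(d_pA_i)^{\#}$ for all $i$'' says $H\models\bigwedge_i\psi_i(h_i)$, while ``$\bigcap_i h_i^{-1}A_i^{\#}\neq\0$'' says $H\models\exists z\,\bigwedge_i\varphi_i(h_iz)$; so it suffices to verify in $G$ the $L_G$-sentence $\forall\bar y\,(\bigwedge_i\psi_i(y_i)\to\exists z\,\bigwedge_i\varphi_i(y_iz))$, which then transfers to $H$. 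Inside $G$ this is clear: if $g_1,\dots,g_n\in G$ satisfy $g_i\in d_pA_i$, i.e.\ $g_i^{-1}A_i\in p$, then $\bigcap_i g_i^{-1}A_i$ is a finite intersection of members of $p$, hence lies in $p$ and is non-empty, and any $g$ in it satisfies $g_ig\in A_i$ for all $i$.

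Finally, for step (b): the inclusion $(d_pA)^{\#}\subseteq d_qA^{\#}$ is immediate, since $h\in(d_pA)^{\#}$ puts $h^{-1}A^{\#}\in\F\subseteq q$, i.e.\ $h\in d_qA^{\#}$. For the converse, if $h\in d_qA^{\#}$ but $h\notin(d_pA)^{\#}$, then --- using that $d_p$ is a Boolean homomorphism and $\mbox{}^{\#}$ commutes with complementation --- $h\in(d_p(G\setminus A))^{\#}$, so $h^{-1}(G\setminus A)^{\#}=H\setminus h^{-1}A^{\#}$ lies in $\F\subseteq q$, contradicting $h^{-1}A^{\#}\in q$. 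Thus $d_qA^{\#}=(d_pA)^{\#}$ for every $A\in\A$, so $q$ is a weak heir of $p$ (and extends $p$, by Lemma \ref{2.2}), as required.
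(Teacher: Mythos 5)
Your proof is correct, but the weak heir half takes a genuinely different route from the paper. For the weak coheir you do exactly what the paper does: $p^{\B}$ is a coheir, hence a weak coheir by Remark \ref{2.5} and Lemma \ref{2.3}(2). For the weak heir, the paper does not build the ultrafilter by hand: it first extends $p$ to a complete type $p'\in S(G)$, takes an heir extension $q_1'\in S(H)$ of $p'$ in the classical sense, extends $q_1'$ to an ultrafilter of $\Po(H)$ and restricts to $\B$; the weak heir condition then follows from the heir property via the characterization in Lemma \ref{2.3}(1) (this is consistent with the paper's remark that a weak heir is a weakening of an heir to the formulas $\varphi_{A,B}$). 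You instead construct the weak heir directly: your family $\F$ is precisely the set $p_{\B}$ that the paper introduces later in Lemma \ref{2.8}, and you verify its finite intersection property by transferring, via $G\prec H$, the $L_G$-sentence expressing that whenever $g_i\in d_pA_i$ the translates $g_i^{-1}A_i$ have a common point (which holds in $G$ because $p$ is a filter); here you implicitly and correctly use that $\A$ is $d$-closed and contained in $\Def(G)$, so that $d_pA_i$ has a defining formula, and that $\#$ and $d_p$ are Boolean homomorphisms for step (b). What your approach buys is self-containedness: it bypasses the existence of heir extensions of complete types (itself proved by an analogous compactness-plus-elementarity argument) and, as a by-product, makes the non-emptiness claim in Lemma \ref{2.8} independent of Lemma \ref{2.6}, since you show directly that $p_{\B}$ generates a proper filter; the paper's route is shorter given that heirs are standard, and it makes the conceptual point that weak heirs are restrictions of heirs.
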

 \begin{proof}. Let $q_0=p^{\B}$. By Remark \ref{2.5} and Lemma \ref{2.3}(2), $q_0$ is a coheir of $p$, hence a weak coheir of $p$. Regarding the weak heir case, first extend $p$ to a complete type $p'\in S(G)$ and let $q_1'\in S(H)$ be a heir extension of $p'$. We can regard $q_1'$ as an ultrafilter in the algebra $\Def(H)$. Extend $q_1'$ to $q_1''\in \beta H$ and let $q_1=q_1''\cap \B$. By the discussion before Lemma \ref{2.6},  $q_1\in S(\B)$ is a weak heir of $p$.
 \end{proof}

Let $CH(\B/\A)=\{q\in S(\B): q \mbox{ is a coheir over }\A\}$ and similarly define the sets $WCH(\B/\A)$ and $WH(\B/\A)$ of weak coheirs and weak heirs respectively.

\begin{remark}\label{2.7} $CH(\B/\A)$ and $WCH(\B/\A)$ are closed subsets of $S(\B)$ and $CH(\B/\A)\subseteq WCH(\B/\A)$. The sets $CH(\B/\A), WCH(\B/\A)$ and $WH(\B/\A)$ are non-empty.
\end{remark}
Even though the set $WH(\B/\A)$ need not be closed (see an example at the end of this paper), it is a union of certain closed sets. For $p\in S(\A)$ let $WH_p(\B)=\{q\in WH(\B/\A): p\subseteq q\}$.
\begin{lemma}\label{2.8} The set $WH_p(\B)$ is closed and non-empty. $WH(\B/\A)=\bigcup_{p\in S(\A)}WH_p(\B)$.
\end{lemma}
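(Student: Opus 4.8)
The plan is to first identify $WH_p(\B)$ with the set of all $q\in S(\B)$ that are weak heirs of $p$, and then to exhibit that set as an intersection of closed subsets of $S(\B)$.

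For the identification: if $q$ is a weak heir of $p$, then $p\subseteq q$ by Lemma \ref{2.2}, so $r(q)=p$, hence $q\in WH(\B/\A)$ with $p\subseteq q$, i.e. $q\in WH_p(\B)$; conversely, if $q\in WH_p(\B)$, then $q\in WH(\B/\A)$ means $q$ is a weak heir of $r(q)$, and $p\subseteq q$ gives $r(q)=p$, so $q$ is a weak heir of $p$. By Definition \ref{2.1}(1) this gives $WH_p(\B)=\{q\in S(\B): d_qA^{\#}=(d_pA)^{\#}\text{ for every }A\in\A\}$. The equality $WH(\B/\A)=\bigcup_{p\in S(\A)}WH_p(\B)$ is then immediate: the inclusion $\supseteq$ holds since each $WH_p(\B)\subseteq WH(\B/\A)$, and $\subseteq$ holds since any $q\in WH(\B/\A)$ is a weak heir of $p:=r(q)$, so $q\in WH_p(\B)$. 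Non-emptiness of $WH_p(\B)$ is exactly the weak-heir part of Lemma \ref{2.6}.

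For closedness, fix $A\in\A$. Since $\A$ is $d$-closed, $d_pA\in\A$, so $(d_pA)^{\#}$ is a fixed element of $\B$; since $\B$ is a $d$-closed $H$-algebra, $d_qA^{\#}\in\B$ for every $q\in S(\B)$, and for each $h\in H$ we have $h\in d_qA^{\#}$ iff $h^{-1}A^{\#}\in q$, so $\{q\in S(\B): h\in d_qA^{\#}\}=S(\B)\cap[h^{-1}A^{\#}]$ is clopen because $h^{-1}A^{\#}\in\B$. Hence, for each $A$,
$$\{q\in S(\B): d_qA^{\#}=(d_pA)^{\#}\}=\bigcap_{h\in H}\{q\in S(\B): h\in d_qA^{\#}\Leftrightarrow h\in(d_pA)^{\#}\}$$
is an intersection of clopen sets, so it is closed, and therefore
$$WH_p(\B)=\bigcap_{A\in\A}\{q\in S(\B): d_qA^{\#}=(d_pA)^{\#}\}$$
is closed as well.

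The argument is routine once this reformulation is set up; the only point to watch is that the equality $d_qA^{\#}=(d_pA)^{\#}$ of subsets of $H$ is not a clopen condition on $q$ — distinct members of $\B$ can have the same trace on $G$ — which forces the intersection over all $h\in H$ rather than a finite Boolean combination, and is also why $WH_p(\B)$ need not be open even though $WH(\B/\A)$ is in general not closed. Alternatively, one may run the closedness argument from the criterion of Lemma \ref{2.3}(1): for fixed $A,B\in\A$ the conclusion ``$g\in B$ and $g^{-1}A\in p$ for some $g\in G$'' does not depend on $q$, so $WH_p(\B)$ is the intersection of the closed set $\{q\in S(\B):r(q)=p\}$ (closed by continuity of $r$) with a family of clopen sets indexed by $A,B\in\A$ and $h\in H$, yielding the same conclusion.
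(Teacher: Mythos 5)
Your proof is correct and is essentially the paper's argument: the paper also shows closedness by exhibiting $WH_p(\B)$ as the set of ultrafilters containing the family $\{h^{-1}A^{\#}: h\in (d_pA)^{\#},\ A\in\A\}$, which is exactly your intersection of clopen sets $[h^{-1}A^{\#}]$ (the paper gets the reverse inclusion $d_qA^{\#}\subseteq (d_pA)^{\#}$ for free by applying the one-sided condition to $G\setminus A$, where you instead impose the biconditional directly). The remaining parts (non-emptiness via Lemma \ref{2.6} and the union decomposition) match the paper as well.
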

\begin{proof}. $WH_p(\B)\neq\0$ by Lemma \ref{2.6}. Let
$$p_{\B}=\{h^{-1}A^{\#}: h\in (d_pA)^{\#}\mbox{ and } A\in\A\}$$
By Definition \ref{2.1}(1), $WH_p(\B)=\{q\in S(\B): p_{\B}\subseteq q\}$, so it is closed. The last part of the lemma is obvious.
\end{proof}

We see that our definitions of weak heir and weak coheir consist in restricting the definitions of heir and coheir to some special formulas.
The next proposition justifies this particular restriction.
\begin{proposition}\label{2.9} Assume $q\in S(\B)$.\\
(1) $q$ is a weak heir over $\A$ iff $r(s*q)=r(s)*r(q)$ for every $s\in S(\B)$.\\
(2) $q$ is a weak coheir over $\A$ iff $r(q*s)=r(q)*r(s)$ for every $s\in S(\B)$.
\end{proposition}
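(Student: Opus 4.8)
The plan is to reduce both equivalences to the characterization of $d_{p*q}$ from Fact~\ref{1.3}(2) together with Remark~\ref{1.6}, and then to match the resulting conditions against the definitions of weak heir and weak coheir in Definition~\ref{2.1}. The basic computation to have in hand is: for $A\in\A$ and $s,q\in S(\B)$,
$$r(s*q)\ni A \iff A^{\#}\in s*q \iff d_q A^{\#}\in s,$$
and likewise $r(s)*r(q)\ni A\iff d_{r(q)}A\in r(s)\iff (d_{r(q)}A)^{\#}\in s$. So $r(s*q)=r(s)*r(q)$ for all $s$ is equivalent to: for every $A\in\A$, the sets $d_qA^{\#}$ and $(d_{r(q)}A)^{\#}$ lie in exactly the same elements $s$ of $S(\B)$, i.e. $d_qA^{\#}=(d_{r(q)}A)^{\#}$ as elements of $\B$. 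By Lemma~\ref{2.2} a weak heir $q$ satisfies $r(q)=p$, and then the displayed equality is precisely Definition~\ref{2.1}(1); conversely, if the equality $d_qA^{\#}=(d_{r(q)}A)^{\#}$ holds for all $A$, then $q$ is by definition a weak heir over $\A$. (One should first note that the equality of two clopen sets in $S(\B)$ is detected by membership in all ultrafilters, so the ``for all $s$'' quantifier genuinely collapses to set equality.) This handles~(1).

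For~(2), compute symmetrically: for $A\in\A$ and $s,q\in S(\B)$,
$$r(q*s)\ni A\iff A^{\#}\in q*s\iff d_sA^{\#}\in q,$$
while $r(q)*r(s)\ni A\iff d_{r(s)}A\in r(q)\iff (d_{r(s)}A)^{\#}\in q$, and by Remark~\ref{1.6} we have $(d_{r(s)}A)^{\#}$ and $d_sA^{\#}$ agreeing after intersecting with $G$, namely $d_sA^{\#}\cap G=d_{r(s)}A$. Thus $r(q*s)=r(q)*r(s)$ for all $s$ says: for every $A\in\A$ and every $s\in S(\B)$,
$$d_sA^{\#}\in q\iff d_sA^{\#}\cap G\in r(q),$$
which is exactly the defining condition of weak coheir from Definition~\ref{2.1}(2) once we know $p=r(q)$. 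For the reverse implication, Definition~\ref{2.1}(2) directly gives $d_sA^{\#}\in q\iff (d_{r(s)}A)^{\#}\in q$ for all $A$ and $s$, hence $A\in r(q*s)\iff A\in r(q)*r(s)$; and note that weak coheir of $p$ forces $p=r(q)$ by Lemma~\ref{2.2}, so there is no mismatch of base points. This closes~(2).

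I expect no serious obstacle here; the proposition is essentially a bookkeeping unfolding of Fact~\ref{1.3}(2) and Remark~\ref{1.6}. The one point that deserves care is the quantifier reduction: the definitions of weak heir/weak coheir are stated with a ``for every $A\in\A$'' (and, for coheir, ``for every $s\in S(\B)$'') quantifier, whereas the proposition quantifies over $s\in S(\B)$; the bridge is that two elements of a Boolean algebra are equal iff they lie in the same ultrafilters, so ``$d_qA^{\#}=(d_{r(q)}A)^{\#}$ in $\B$'' is the same as ``$d_qA^{\#}\in s\iff (d_{r(q)}A)^{\#}\in s$ for all $s\in S(\B)$''. A secondary subtlety, already flagged in the text after Definition~\ref{2.1} and after Lemma~\ref{2.2}, is the silent use of $p=r(q)$: in both directions one must invoke Lemma~\ref{2.2} (or Definition~\ref{2.1} respectively) to ensure the base type over $\A$ on the two sides of the claimed equality matches, and for the ``only if'' direction one should observe that taking $s=\hat e$ (as in the proof of Lemma~\ref{2.2}) already forces $r(q)\subseteq q$, i.e. $r(q)$ is the restriction — so the hypothesis $r(s*q)=r(s)*r(q)$ for all $s$ is indeed strong enough to recover that $q$ extends $r(q)$ in the required sense.
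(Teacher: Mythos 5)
Your proposal is correct and follows essentially the same route as the paper: both rest on the computation $A\in r(s*q)\iff d_qA^{\#}\in s$ versus $A\in r(s)*r(q)\iff (d_{r(q)}A)^{\#}\in s$ (and its mirror image for (2)), with the quantifier over $s$ collapsed to equality in $\B$ because ultrafilters separate distinct elements — the paper phrases this last step by putting the symmetric difference $d_qA^{\#}\triangle(d_{r(q)}A)^{\#}$ into some $s$, which is the same idea. Your extra remarks about base points are harmless (the paper handles this via Lemma \ref{2.2} and the convention that ``weak [co]heir over $\A$'' means over $r(q)$), so nothing further is needed.
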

\begin{proof}. Note that for any $A\in\A$ and $s\in S(\B)$,
$$A\in r(s*q)\iff d_qA^{\#}\in s$$
$$A\in r(s)*r(q)\iff (d_{r(q)}A)^{\#}\in s$$

(1) If $q$ is a weak heir over $\A$, then the right hand sides of the above equivalences are equivalent for each $A,s$, so $r(s*q)=r(s)*r(q)$. On the other hand, if $q$ is not a weak heir over $\A$, then $d_qA^{\#}\neq(d_{r(q)}A)^{\#}$ for some $A\in \A$. Thus we can find $s\in S(\B)$ with $d_qA^{\#}\triangle(d_{r(q)}A)^{\#}\in s$ and then $r(s*q)\neq r(s)*r(q)$.

(2) follows directly from definition.
\end{proof}
In fact, Proposition \ref{2.9}(1)$\Rightarrow$ was already proved in \cite{ne2} in the model theoretic set-up.

For $h\in H$ let $\hat{h}=\{B\in \B: h\in H\}$ and $\hat{H}=\{\hat{h}:h\in H\}$. Clearly $\hat{H}$ is a dense subset of $S(\B)$. It is easy to see that the function $f:H\to\hat{H}$ mapping $h$ to $\hat{h}$ is a $*$-homomorphism, so $\hat{H}$ is a dense subgroup of $S(\B)$ and $f$ is a group epimorphism. Since $*$ is a continuous in the first coordinate we get the following corollary.

\begin{corollary}\label{2.10} Assume $q\in S(\B)$. Then $q$ is a weak heir over $\A$ iff $r(\hat{h}*q)=r(\hat{h}) * r(q)$ for every $h\in H$.
\end{corollary}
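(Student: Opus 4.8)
The plan is to deduce Corollary~\ref{2.10} from Proposition~\ref{2.9}(1) together with the continuity of $*$ in the first coordinate and the density of $\hat H$ in $S(\B)$. The left-to-right direction is immediate: if $q$ is a weak heir over $\A$, then by Proposition~\ref{2.9}(1) we have $r(s*q)=r(s)*r(q)$ for \emph{every} $s\in S(\B)$, in particular for $s=\hat h$ with $h\in H$. So the only real content is the converse.

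For the converse, assume $r(\hat h*q)=r(\hat h)*r(q)$ for every $h\in H$; I want to upgrade this to $r(s*q)=r(s)*r(q)$ for all $s\in S(\B)$ and then invoke Proposition~\ref{2.9}(1). Fix $A\in\A$. Using the two equivalences recorded at the start of the proof of Proposition~\ref{2.9}, namely $A\in r(s*q)\iff d_qA^{\#}\in s$ and $A\in r(s)*r(q)\iff (d_{r(q)}A)^{\#}\in s$, the hypothesis applied to $s=\hat h$ says precisely that for every $h\in H$,
$$h\in d_qA^{\#}\iff h\in (d_{r(q)}A)^{\#}.$$
Since this holds for all $h\in H$, the two subsets $d_qA^{\#}$ and $(d_{r(q)}A)^{\#}$ of $H$ are equal. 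As $A\in\A$ was arbitrary, this is exactly the defining condition of a weak heir in Definition~\ref{2.1}(1), so $q$ is a weak heir over $\A$, and we are done.

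In fact the argument shows that testing the equality $r(\hat h*q)=r(\hat h)*r(q)$ against all $h\in H$ already recovers the full weak-heir condition without needing to pass through arbitrary $s\in S(\B)$; the role of the density of $\hat H$ and of left-continuity of $*$ is simply the conceptual observation (made just before the corollary) that $\hat H$ is a dense subgroup and $q\mapsto l_q$ is continuous, so conditions of the form ``$r(s*q)=r(s)*r(q)$'' for $s$ ranging over a dense set propagate to all of $S(\B)$; but for this particular statement the pointwise computation above is cleanest. I do not anticipate a genuine obstacle here: the corollary is essentially a restatement of Proposition~\ref{2.9}(1) with the parameter $s$ restricted to the dense subgroup $\hat H$, and the only thing to check carefully is that equality of $d_qA^{\#}$ and $(d_{r(q)}A)^{\#}$ as subsets of $H$ follows from their having the same elements, which is trivial.
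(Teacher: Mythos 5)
Your proof is correct, and for the nontrivial direction it takes a different (more elementary) route than the one the paper intends. The paper derives the corollary from Proposition~\ref{2.9}(1) by a topological argument: $\hat H$ is dense in $S(\B)$, the maps $s\mapsto r(s*q)$ and $s\mapsto r(s)*r(q)$ are continuous (left-continuity of $*$ on $S(\B)$ and on $S(\A)$, plus continuity of $r$), and two continuous maps into a Hausdorff space agreeing on a dense set agree everywhere; then the full hypothesis of Proposition~\ref{2.9}(1) holds and the weak-heir conclusion follows. You instead specialize the two equivalences $A\in r(s*q)\iff d_qA^{\#}\in s$ and $A\in r(s)*r(q)\iff (d_{r(q)}A)^{\#}\in s$ to the principal ultrafilters $s=\hat h$, so the hypothesis says $h\in d_qA^{\#}\iff h\in (d_{r(q)}A)^{\#}$ for all $h\in H$, giving $d_qA^{\#}=(d_{r(q)}A)^{\#}$ for every $A\in\A$, which is literally Definition~\ref{2.1}(1) (with $p=r(q)$, as forced by Lemma~\ref{2.2}). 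Your computation avoids any appeal to density or continuity and shows that the principal ultrafilters alone already detect the weak-heir condition pointwise; the paper's route is shorter to state given the machinery already set up and makes the ``dense subgroup'' picture explicit, which is the conceptual point the surrounding discussion is after. The forward direction is the same in both: restrict Proposition~\ref{2.9}(1) to $s=\hat h$. One cosmetic remark: your closing paragraph about propagation along a dense set is exactly the paper's argument, and to make it precise one must note that both $s\mapsto r(s*q)$ and $s\mapsto r(s)*r(q)$ are continuous and that $S(\A)$ is Hausdorff; but since your main argument does not rely on it, this does not affect correctness.
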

Notice also that $\hat{h}*q=hq$.
\begin{corollary}\label{2.11} The sets $CH(\B/\A)\subseteq WCH(\B/\A)$ and $WH(\B/\A)$ are sub-semigroups of $S(\B)$ and restrictions $r:WCH(\B/\A)\to S(\A)$ and $r:WH(\B/\A)\to S(\A)$ are $*$-epimorphisms. The restriction $r:CH(\B/\A)\to S(\A)$ is a semigroup isomorphism and homeomorphism. Its inverse is the function $j:S(\A)\to CH(\B/\A)$ mapping $p$ to $p^{\B}$.
\end{corollary}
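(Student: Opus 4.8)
The plan is to verify each assertion of Corollary~\ref{2.11} by assembling the pieces already established. First I would handle $CH(\B/\A)$ and $WCH(\B/\A)$. By Remark~\ref{2.7} both are closed, hence compact, subsets of $S(\B)$; to see they are sub-semigroups it suffices to show closure under $*$. For $WCH(\B/\A)$ this is immediate from Proposition~\ref{2.9}(2): if $q_1,q_2$ are weak coheirs, then for every $s\in S(\B)$ we have $r((q_1*q_2)*s)=r(q_1*(q_2*s))=r(q_1)*r(q_2*s)=r(q_1)*r(q_2)*r(s)=r(q_1*q_2)*r(s)$, so $q_1*q_2\in WCH(\B/\A)$. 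For $CH(\B/\A)$ one uses Remark~\ref{2.5}: a coheir over $\A$ is exactly $p^{\B}$ for $p=r(q)$, and since $j\colon p\mapsto p^{\B}$ is a continuous embedding whose image is $CH(\B/\A)$, closure under $*$ will follow once we know $r$ restricted to $CH(\B/\A)$ is a semigroup homomorphism — equivalently once we know $p^{\B}*p'^{\B}=(p*p')^{\B}$. That last identity is the content of Proposition~\ref{2.9}(2) applied to $q=p^{\B}$, which is a (weak) coheir: it gives $r(p^{\B}*s)=p*r(s)$ for all $s$; taking $s=p'^{\B}$ yields $r(p^{\B}*p'^{\B})=p*p'$, and since $p^{\B}*p'^{\B}$ lies in the compact semigroup $CH(\B/\A)$ (using that $p'^{\B}$ is a coheir and left multiplication preserves coheirs, which again is Proposition~\ref{2.9}(2)), it equals $(p*p')^{\B}$ by Remark~\ref{2.5}.

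Next I would treat $WH(\B/\A)$. By Proposition~\ref{2.9}(1), $q$ is a weak heir iff $r(s*q)=r(s)*r(q)$ for all $s$; so if $q_1,q_2\in WH(\B/\A)$ then $r(s*(q_1*q_2))=r((s*q_1)*q_2)=r(s*q_1)*r(q_2)=r(s)*r(q_1)*r(q_2)=r(s)*r(q_1*q_2)$, so $q_1*q_2$ is again a weak heir. Hence $WH(\B/\A)$ is a sub-semigroup (not necessarily closed, as noted before the corollary).

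For the epimorphism statements: $r$ is already known to be a continuous surjection $S(\B)\to S(\A)$, and by Lemma~\ref{2.6} every $p\in S(\A)$ has a weak heir and a weak coheir extension, so the restrictions $r\colon WH(\B/\A)\to S(\A)$ and $r\colon WCH(\B/\A)\to S(\A)$ are surjective; that they are semigroup homomorphisms is exactly Proposition~\ref{2.9} specialized to pairs of weak [co]heirs — for $q_1,q_2$ weak coheirs, $r(q_1*q_2)=r(q_1)*r(q_2)$ by Proposition~\ref{2.9}(2) with $s=q_2$, and for weak heirs, $r(q_1*q_2)=r(q_1)*r(q_2)$ by Proposition~\ref{2.9}(1) with $s=q_1$. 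Finally, for $r\colon CH(\B/\A)\to S(\A)$: Remark~\ref{2.5} already says $p\mapsto p^{\B}$ is a continuous embedding $S(\A)\to S(\B)$ with image $CH(\B/\A)$ and with $r$ as a two-sided inverse, so $r|_{CH(\B/\A)}$ is a homeomorphism; it is a semigroup homomorphism by the identity $p^{\B}*p'^{\B}=(p*p')^{\B}$ established above, hence an isomorphism, with inverse $j$.

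I do not expect a genuine obstacle here — the corollary is a bookkeeping consequence of Proposition~\ref{2.9}, Remark~\ref{2.5}, Remark~\ref{2.7}, and Lemma~\ref{2.6}. The one point requiring a little care is arguing that $CH(\B/\A)$ is closed under $*$: one must observe that $p^{\B}*p'^{\B}$ is itself a coheir before invoking Remark~\ref{2.5} to identify it, and the cleanest route is to note that $CH(\B/\A)=j[S(\A)]$ is the continuous image of a compact space, so it is closed, and then that $r$ maps it homeomorphically onto $S(\A)$ while commuting with $*$, which forces the product of two coheirs computed in $S(\B)$ to again be the unique coheir lying over its restriction. Everything else is a direct substitution into the cited results.
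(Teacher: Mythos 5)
Most of your proposal is fine and follows the same bookkeeping route the paper intends: closure of $WCH(\B/\A)$ and $WH(\B/\A)$ under $*$ and the epimorphism statements do follow from Proposition \ref{2.9} together with Lemma \ref{2.6}, and the homeomorphism claim for $CH(\B/\A)$ from Remarks \ref{2.5} and \ref{2.7}. However, there is a genuine gap at exactly the one point the paper's proof is devoted to: showing that $CH(\B/\A)$ is closed under $*$. Your justification that $p^{\B}*p'^{\B}\in CH(\B/\A)$ rests on ``left multiplication preserves coheirs, which again is Proposition \ref{2.9}(2)''. Proposition \ref{2.9}(2) says nothing of the sort: it only gives $r(q*s)=r(q)*r(s)$ for a weak coheir $q$, and in general left multiplication by an arbitrary element of $S(\B)$ does \emph{not} preserve coheirs (for instance $x*\hat{g}=xg$ for $g\in G$, which fails to be a coheir as soon as some $A\in x$ satisfies $A\cap G=\0$); read as ``multiplication of a coheir by a coheir yields a coheir'' it is precisely the statement to be proved, so the argument is circular. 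Your fallback at the end has the same defect: knowing that $CH(\B/\A)=j[S(\A)]$ is closed and that $r$ restricted to it is a homeomorphism commuting with $*$ does not force $p^{\B}*p'^{\B}$, computed in $S(\B)$, to lie in $CH(\B/\A)$ at all --- there are many types over $\B$ restricting to $p*p'$ (e.g.\ weak coheirs that are not coheirs), and uniqueness of the coheir extension only identifies the product once you already know it is a coheir.

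The paper closes this gap by a direct computation with the maps $d$: for a coheir $q$ and $A\in\B$ one has $A\in q\iff A\cap G\in r(q)$ and $d_qA\cap G=d_{r(q)}(A\cap G)$, whence for coheirs $p,q$,
$$A\in p*q\iff d_qA\in p\iff d_qA\cap G\in r(p)\iff d_{r(q)}(A\cap G)\in r(p)\iff A\cap G\in r(p)*r(q),$$
so $A\in p*q$ implies $A\cap G\neq\0$, i.e.\ $p*q\in CH(\B/\A)$ (and $r(p*q)=r(p)*r(q)$ falls out of the same chain). Alternatively, you could repair your compactness idea honestly: $CH(\B/\A)$ is exactly the closure of $\hat{G}=\{\hat{g}:g\in G\}$ in $S(\B)$, each $\hat{g}*p'^{\B}$ is a coheir by Lemma \ref{2.12}(4), and since $*$ is continuous in the left coordinate, $p^{\B}*p'^{\B}=\lim_{\hat{g}\to p^{\B}}\hat{g}*p'^{\B}$ lies in the closed set $CH(\B/\A)$. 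Either way, the membership $p^{\B}*p'^{\B}\in CH(\B/\A)$ needs a real argument, which your proposal does not supply.
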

\begin{proof}. We prove that the set $CH(\B/\A)$ is closed under $*$, the other claims of the corollary follow directly from earlier results. First, for every $q\in CH(\B/\A)$ and $A\in\B$ we have that
$$A\in q\iff A\cap G\in r(q)\mbox{ and } d_qA\cap G = d_{r(q)}(A\cap G)$$
Therefore for $p,q\in CH(\B/\A)$ and $A\in\B$ we have that
$$A\in p*q\iff d_qA\in p\iff d_qA\cap G\in r(p)\iff $$
$$d_{r(q)}(A\cap G)\in r(p)\iff A\cap G\in r(p)*r(q)$$
Hence $A\in p*q$ implies $A\cap G\neq\0$ and $p*q\in CH(\B/\A)$.
\end{proof}

Even though the left-continuous semigroup $WH(\B/\A)$ may not be closed, still it has the properties from Fact \ref{1.1} (see Proposition \ref{3.0.5}).
In the model theoretic set-up Corollary \ref{2.11} was already partially noticed in earlier papers. Namely in \cite{ne1} it was mentioned that the semigroup $S(\A)$ is isomorphic to $CH(\B/\A)$ (suitably rephrased in the model theoretic context) and in \cite{ne2} it was proved that $WH(\B/\A)$ is a semigroup and $r:WH(\B/\A)\to S(\A)$ is an epimorphism. We shall use Corollary \ref{2.11} later to relate algebraically the Ellis groups of $S(\A)$ and $S(\B)$. The semigroup $WH(\B/\A)\cap CH(\B/A)$ contains an important subgroup.
\begin{lemma}\label{2.12} Let $\hat{G}=\{\hat{h}\in\hat{H}: h\in G\}$.\\
(1) $\hat{G}$ is a subgroup of $S(\B)$ and the mapping $g\mapsto \hat{g}$ is a group epimorphism $G\to \hat{G}$.\\
(2) $\hat{G}\subseteq WH(\B/\A)\cap CH(\B/A)$, $\hat{e}$ is the unique identity element in $S(\B)$.\\
(3) For every $q\in S(\B)$ and $g\in G$ we have that
$$q\in WH(\B/\A)\iff \hat{g}*q\in WH(\B/\A)\iff q*\hat{g}\in WH(\B/A)$$
(4) Like (3), but with $WH$ replaced by $WCH$ [or $CH$].
\end{lemma}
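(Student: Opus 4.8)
The plan is to verify the four parts in order, leaning heavily on the fact (stated just before the lemma) that $f:H\to\hat H$, $h\mapsto\hat h$, is a group epimorphism and that $\hat h * q = hq$, together with the characterizations of weak heirs and weak coheirs already established.

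For (1), I would observe that $\hat G = f[G]$ is the image of the subgroup $G\le H$ under the group epimorphism $f:H\to\hat H\le S(\B)$, hence a subgroup of $S(\B)$, and $g\mapsto\hat g$ is a group epimorphism onto $\hat G$ since it is the restriction of $f$. For (2), note first that $\hat e$ is an identity in $S(\B)$ because $\hat e * q = eq = q$ and, using left-continuity of $*$ together with density of $\hat H$, $q * \hat e = q$ as well (alternatively cite Fact \ref{1.3} and the corresponding fact for $\hat H$); uniqueness of the identity in any monoid is automatic. That $\hat G\subseteq CH(\B/\A)$ is clear: for $g\in G$, every $B\in\hat g$ satisfies $g\in B\cap G$, so $B\cap G\neq\emptyset$. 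That $\hat G\subseteq WH(\B/\A)$ follows from Corollary \ref{2.10}: for $g\in G$ and any $h\in H$ we must check $r(\hat h * \hat g) = r(\hat h)*r(\hat g)$, i.e. $r(\widehat{hg}) = r(\hat h)*r(\hat g)$; but $r(\hat h) = \widehat{(h\cap G)}$-style reasoning together with $r(\widehat{h'})=\widehat{h'}$ computed inside $S(\A)$ when $h'\in G$ makes both sides equal to $\widehat{hg}|_{\A}$ — so the equality reduces to associativity of the action of $H$ on $S(\A)$, which holds because $G\prec H$ acts and $g\in G$. (Concretely: for $A\in\A$, $A\in r(\hat h*\hat g)\iff (hg)^{-1}A^{\#}\ni e\iff hg\in A^{\#}$, and $A\in r(\hat h)*r(\hat g)\iff d_{r(\hat g)}A\in r(\hat h)\iff (g^{-1}A)^{\#}\ni h \iff hg\in A^{\#}$, since $g\in G$ gives $d_{\hat g}A = g^{-1}A$ and $r$ commutes with this.)

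For (3), the key point is that $WH(\B/\A)$ is a semigroup (Corollary \ref{2.11}) containing $\hat G$ by part (2), and $\hat g$ is invertible in $S(\B)$ with inverse $\widehat{g^{-1}}\in\hat G\subseteq WH(\B/\A)$. Hence if $q\in WH(\B/\A)$ then $\hat g * q$ and $q*\hat g$ lie in $WH(\B/\A)$ (product of two elements of a semigroup); conversely, if $\hat g * q\in WH(\B/\A)$, then $q = \widehat{g^{-1}} * (\hat g * q)\in WH(\B/\A)$, and symmetrically for $q*\hat g$. The same argument verbatim proves (4) for $WCH$ and for $CH$, since both $WCH(\B/\A)$ (Corollary \ref{2.11}) and $CH(\B/\A)$ (Corollary \ref{2.11}, where it is shown closed under $*$) are semigroups containing $\hat G$.

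I expect the only mildly delicate step to be the verification $\hat G\subseteq WH(\B/\A)$ in part (2): one must be careful that restriction $r$ interacts correctly with translation by elements of $\hat G$, i.e. that $r(\hat g * q) = \hat g\cdot r(q)$ in the $G$-flow $S(\A)$ for $g\in G$, which is exactly Remark \ref{1.6} specialized to $s=\hat g$ (or a direct unwinding of $d_{\hat g}$). Once that identity is in hand, Corollary \ref{2.10} applied to $q=\hat g$ makes the membership immediate, and everything else is formal semigroup bookkeeping.
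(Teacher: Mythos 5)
Your proposal is correct, and it is essentially the paper's argument: the paper's proof of Lemma \ref{2.12} is a one-liner ("reveal the definitions and apply Proposition \ref{2.9} in (3) and (4)"), and your unfolding of (1) and (2) is exactly that, while your handling of (3) and (4) via the sub-semigroup property of $WH(\B/\A)$, $WCH(\B/\A)$, $CH(\B/\A)$ (Corollary \ref{2.11}) together with invertibility of $\hat{g}$ in $\hat{G}$ is an equivalent repackaging, since Corollary \ref{2.11} is itself a direct consequence of Proposition \ref{2.9}; it buys you a purely formal cancellation argument ($q=\widehat{g^{-1}}*(\hat{g}*q)$, etc.) in place of rechecking the restriction identities for each $s$. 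One small slip in your verification that $\hat{G}\subseteq WH(\B/\A)$: for $g\in G$ one has $d_{\hat{g}}A^{\#}=A^{\#}g^{-1}$ and $d_{r(\hat{g})}A=Ag^{-1}$ (right, not left, translation), so the intermediate formula "$d_{\hat{g}}A=g^{-1}A$" is wrong as written; however, the equivalence you actually use, namely that both $A\in r(\hat{h}*\hat{g})$ and $A\in r(\hat{h})*r(\hat{g})$ reduce to $hg\in A^{\#}$, is the one that follows from the correct computation, so the argument stands after fixing that line.
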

\begin{proof}. The proof is by revealing the definitions and applying Proposition \ref{2.9} in (3) and (4).
\end{proof}

By Fact \ref{1.3}(3), elements of $S(\A)$ and $S(\B)$ may be regarded as functions from the Ellis semigroups $E(S(\A))$ and $E(S(\B))$. In Proposition \ref{2.11.1} we give yet another characterization of weak [co]heirs in terms of functions from $E(S(\A))$ and $E(S(\B))$.

Let $\A^{\#}$ be the $H$-subalgebra of $\B$ generated by the set $\{ A^{\#} : A \in \A \}$. So $\A^{\#}$ need not be $d$-closed. Each $q_0 \in S(\A)$ has a unique weak heir in $S(\A^{\#})$, that is, a type $q \in S(\A^{\#})$ such that $d_q A^{\#} = (d_{q_0} A)^{\#}$ for each $A \in \A$. We write $(q_0)^{\#} := q$.

Consider any $p_0 \in S(\A),p \in S(\B)$ and let $f_0 \in E(S(\A))$, $f \in E(S(\B))$ correspond to $p_0$ and $p$ via the standard isomorphisms from Fact \ref{1.3}(3). $p$ determines a function $f^{\#}:S(\A^{\#})\to S(\A^{\#})$ defined by
$f^{\#}(q) = p * q$, where $p*q\in S(\A^{\#})$ is defined by the formula from Fact \ref{1.3}(2). Since $p*q=\lim_{h\to p}hq$, we have that $f^{\#}\in E(S(\A^{\#}))$ and $f^{\#}$ is the unique function from $E(S(\A^{\#}))$ such that the following diagram commutes:

\begin{center}
\begin{tikzcd}
S(\B) \arrow{r}{f} \arrow{d}{r} & S(\B) \arrow{d}{r} \\
S(\A^{\#}) \arrow{r}{f^{\#}} & S(\A^{\#})
\end{tikzcd}
\end{center}
Also using the formula for $*$ from Fact \ref{1.3}(2) we get that $d_{p*q}=d_p\circ d_q$ for $p\in S(\B)$ and $q\in S(\A^{\#})$ .
\begin{proposition}\label{2.11.1}
Assume $p_0\in S(\A),p\in S(\B)$ and $f_0,f,f^{\#}$ are defined as above.\\
(1) $p$ extends $p_0$ iff the following diagram commutes:
\begin{center}\begin{tikzcd}
S(\A^{\#}) \arrow{r}{f^{\#}} & S(\A^{\#}) \arrow{d}{r} \\
S(\A) \arrow{u}{\#} \arrow{r}{f_0} & S(\A)
\end{tikzcd}
\end{center}
(2) $p$ is a weak heir of $p_0$ iff the following diagram commutes:
\begin{center}\begin{tikzcd}
S(\A^{\#}) \arrow{r}{f^{\#}} & S(\A^{\#}) \\
S(\A) \arrow{u}{\#} \arrow{r}{f_0} & S(\A) \arrow{u}{\#}
\end{tikzcd}
\end{center}
(3) $p$ is a weak coheir of $p_0$ iff the following diagram commutes:
\begin{center}\begin{tikzcd}
S(\A^{\#}) \arrow{d}{r} \arrow{r}{f^{\#}} & S(\A^{\#}) \arrow{d}{r} \\
S(\A) \arrow{r}{f_0} & S(\A)
\end{tikzcd}
\end{center}

\end{proposition}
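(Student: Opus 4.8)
The plan is to reduce everything to the defining formulas and the known description of $*$ on $S(\A^\#)$, and then to check the three equivalences by diagram chasing at a single element $q\in S(\A^\#)$ and a single generator $A^\#$ of $\A^\#$. Throughout I use that $r\colon S(\B)\to S(\A)$ and $r\colon S(\A^\#)\to S(\A)$ are dual to the inclusions $A\mapsto A^\#$, that $f=l_p$, $f_0=l_{p_0}$, $f^\#=l_p$ on $S(\A^\#)$ in the sense given before the statement, and that for $p\in S(\B)$, $q\in S(\A^\#)$ we have $d_{p*q}=d_p\circ d_q$ and $A^\#\in p*q\iff d_q A^\#\in p$. I also use Remark~\ref{1.6}: $d_q A^\#\cap G=d_{r(q)}A$ for $q\in S(\B)$, and the analogous fact for $q\in S(\A^\#)$, which holds since $\A^\#\subseteq\B$ and $r$ on $S(\A^\#)$ is the corestriction of $r$ on $S(\B)$.

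For part~(1): the square in the statement commutes iff for every $p_1\in S(\A)$ we have $r(f^\#((p_1)^\#))=f_0(p_1)=p_0*p_1$. Unwinding the left side, $r(p*(p_1)^\#)$ is the type in $S(\A)$ containing exactly those $A\in\A$ with $A^\#\in p*(p_1)^\#$, i.e. with $d_{(p_1)^\#}A^\#\in p$. Now $d_{(p_1)^\#}A^\#=(d_{p_1}A)^\#$ by the very definition of $(p_1)^\#$ as the weak heir over $\A^\#$, so the condition becomes $(d_{p_1}A)^\#\in p$, equivalently $d_{p_1}A\in r(p)$. On the other side, $A\in p_0*p_1\iff d_{p_1}A\in p_0$. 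So the square commutes iff $d_{p_1}A\in r(p)\iff d_{p_1}A\in p_0$ for all $A,p_1$; taking $p_1$ such that $d_{p_1}=\mathrm{id}$ (e.g. $p_1$ the type concentrated at $e$, using that $\A\subseteq\Def(G)$ and so $e$-types exist in $S(\A)$) forces $r(p)=p_0$, and conversely $r(p)=p_0$ makes both sides literally equal. Hence the square commutes iff $p$ extends $p_0$.

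For part~(2): the square commutes iff $f^\#((p_1)^\#)=(f_0(p_1))^\#=(p_0*p_1)^\#$ in $S(\A^\#)$ for every $p_1\in S(\A)$. Since $\A^\#$ is generated as an $H$-algebra by the $A^\#$, and $S(\A^\#)$ is determined by the $d_q A^\#$ (because $d_q$ is an $H$-algebra homomorphism and the $A^\#$ generate), it suffices to compare $d_{p*(p_1)^\#}A^\#$ with $d_{(p_0*p_1)^\#}A^\#$ for all $A\in\A$. The right side is $(d_{p_0*p_1}A)^\#=(d_{p_0}d_{p_1}A)^\#$ by definition of $(\,\cdot\,)^\#$. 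The left side is $d_p(d_{(p_1)^\#}A^\#)=d_p((d_{p_1}A)^\#)$. Writing $B:=d_{p_1}A\in\A$, equality for all $A,p_1$ says exactly $d_p(B^\#)=(d_{p_0}B)^\#$ for all $B\in\A$, which is the definition of $p$ being a weak heir of $p_0$. (The reverse direction: if $p$ is a weak heir then $d_p B^\#=(d_{p_0}B)^\#$ and we read the chain backwards; note a weak heir extends $p_0$ by Lemma~\ref{2.2}, so $f_0=l_{p_0}$ is the right map.)

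For part~(3): the square commutes iff $r(f^\#(q))=f_0(r(q))$ for every $q\in S(\A^\#)$, i.e. $r(p*q)=r(p)*r(q)$ for every $q\in S(\A^\#)$. One checks $A\in r(p*q)\iff A^\#\in p*q\iff d_q A^\#\in p$, while $A\in r(p)*r(q)\iff d_{r(q)}A\in r(p)\iff (d_{r(q)}A)^\#\in p$. By Remark~\ref{1.6} applied in $S(\A^\#)$, $(d_{r(q)}A)^\#$ is the $\A^\#$-member $d_q A^\#\cap G$ transported back up — more precisely $d_{r(q)}A=d_q A^\#\cap G$, so the two conditions are "$d_q A^\#\in p$" versus "$d_q A^\#\cap G\in r(p)$". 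Thus the square commutes for all $q\in S(\A^\#)$ iff for all $q\in S(\A^\#)$ and $A\in\A$, $d_q A^\#\in p\iff d_q A^\#\cap G\in r(p)$. Comparing with Definition~\ref{2.1}(2): the definition of weak coheir quantifies over $s\in S(\B)$ rather than $q\in S(\A^\#)$, but $d_s A^\#$ depends only on $r_{\A^\#}(s):=s\cap\A^\#\in S(\A^\#)$ since $d_s$ restricted to $\A^\#$ equals $d_{s\cap\A^\#}$, and every element of $S(\A^\#)$ arises this way; so the two quantifications give the same family of sets $d_{(\cdot)}A^\#$. Hence the square commutes iff $p$ is a weak coheir of $p_0$ — here one also has to note that a weak coheir satisfies $r(p)=p_0$ by Lemma~\ref{2.2}, which is exactly the content "$f_0=l_{p_0}$", and conversely commutativity at $q=\hat e\cap\A^\#$ gives back $r(p)=p_0$.

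The main obstacle I expect is bookkeeping rather than depth: namely the clean identification of $S(\A^\#)$ with the data $(d_q A^\#)_{A\in\A}$, i.e. justifying that a function $f^\#\in E(S(\A^\#))$ is pinned down by its action on the clopen sets $[A^\#]$, and that $d_s|_{\A^\#}=d_{s\cap\A^\#}$ so that the "$s\in S(\B)$" quantifier in the definition of weak coheir can be replaced by "$q\in S(\A^\#)$" without change. Once this compatibility of restriction maps with $d$ is in place, all three parts are a one-line computation with the formula $A^\#\in p*q\iff d_q A^\#\in p$ and Remark~\ref{1.6}; the only other subtlety is remembering to invoke Lemma~\ref{2.2} in parts (2) and (3) to ensure the bottom arrow is $f_0$ for the correct $p_0=r(p)$.
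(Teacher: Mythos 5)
Your proof is correct and takes essentially the same route as the paper: each square is unwound pointwise using the mixed product formulas $d_{p*q}=d_p\circ d_q$ and $A^{\#}\in p*q\iff d_qA^{\#}\in p$, the defining property of $(\cdot)^{\#}$ as the unique weak heir in $S(\A^{\#})$, and the fact that a type in $S(\A^{\#})$ is determined by the sets $d_qA^{\#}$, $A\in\A$. The only cosmetic difference is in (3), where the paper cites Proposition \ref{2.9}(2) and passes through the outer rectangle on $S(\B)$ using surjectivity of $r:S(\B)\to S(\A^{\#})$, whereas you transfer the quantifier from $s\in S(\B)$ to $q\in S(\A^{\#})$ directly via $d_s|_{\A^{\#}}=d_{s\cap\A^{\#}}$ and the same surjectivity --- the same argument in substance.
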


\begin{proof}.
(1) We have that
\begin{align*}
p \text{ extends } p_0 & \iff (\forall q \in S(\A))(\forall A \in \A) \, \big( d_q A \in p_0 \Leftrightarrow (d_q A)^{\#} \in p \big) \\[1ex]
& \iff (\forall q \in S(\A))(\forall A \in \A) \, \big( d_q A \in p_0 \Leftrightarrow d_{q^{\#}} A^{\#} \in p \big) \\[1ex]
& \iff (\forall q \in S(\A))(\forall A \in \A) \, \big( A \in p_0 \ast q \Leftrightarrow A^{\#} \in p \ast q^{\#} \big) \\[1ex]
& \iff (\forall q \in S(\A))(\forall A \in \A) \, \big( A \in f_0(q) \Leftrightarrow A^{\#} \in f^{\#}(q^{\#}) \big) \\[1ex]
& \iff (\forall q \in S(\A))(\forall A \in \A) \, \big( A \in f_0(q) \Leftrightarrow A \in r(f^{\#}(q^{\#})) \big)
\end{align*}
and the last condition means that the corresponding diagram commutes.

(2) We have that
\begin{align*}
p \text{ is a weak heir of } p_0 & \iff (\forall B \in \A) \, (d_{p_0} B)^{\#} = d_p B^{\#} \\[1ex]
& \iff (\forall q \in S(\A))(\forall A \in \A) \, (d_{p_0} d_q A)^{\#} = d_p (d_q A)^{\#} \\[1ex]
& \iff (\forall q \in S(\A))(\forall A \in \A) \, (d_{p_0} d_q A)^{\#} = d_p d_{q^{\#}} A^{\#} \\[1ex]
& \iff (\forall q \in S(\A))(\forall A \in \A) \, (d_{p_0 * q} A)^{\#} = d_{p * q^{\#}} A^{\#} \\[1ex]
& \iff (\forall q \in S(\A))(\forall A \in \A) \, d_{(p_0 * q)^{\#}} A^{\#} = d_{p * q^{\#}} A^{\#} \\[1ex]
& \iff (\forall q \in S(\A))(\forall A \in \A) \, d_{f_0(q)^{\#}} A^{\#} = d_{f^{\#}(q^{\#})} A^{\#}.
\end{align*}
The last condition holds exactly when $f_0(q)^{\#} = f^{\#}(q^{\#})$ for each $q \in S(\A)$, which is equivalent to the commutativity of the corresponding diagram.

(3) By Proposition \ref{2.9} we have that
\begin{align*}
p \text{ is a weak coheir of } p_0 & \iff (\forall q \in S(\B)) \, r(p * q) = p_0 * r(q) \\[1ex]
& \iff (\forall q \in S(\B)) \, r(f(q)) = f_0(r(q)),
\end{align*}
where $r:S(\B)\to S(\A)$ is restriction.
The last condition is equivalent to the commutativity of the outer rectangle in the following diagram:
\begin{center}
\begin{tikzcd}
S(\B) \arrow{d}{r} \arrow{r}{f} & S(\B) \arrow{d}{r} \\
S(\A^{\#}) \arrow{d}{r} \arrow{r}{f^{\#}} & S(\A^{\#}) \arrow{d}{r} \\
S(\A) \arrow{r}{f_0} & S(\A)
\end{tikzcd}
\end{center}
Since the upper rectangle is commutative and $r : S(\B) \to S(\A^{\#})$ is surjective, the condition is equivalent to the commutativity of the lower rectangle.
\end{proof}

In order to be able  to compare the flows $S(\A)$ and $S(\B)$ we need a way to lift minimal left ideals in $S(\A)$ to minimal left ideals in $S(\B)$. This uses weak heirs and was already done in \cite[Lemma 2.4]{ne2} in the model theoretic set-up. Here we easily adapt the argument from \cite{ne2}. Recall from topological dynamics that a point $p$ in a $G$-flow $X$ is called almost periodic if $p$ belongs to a minimal subflow of $X$, equivalently the subflow $cl(Gp)\subseteq X$ generated by $p$ is minimal. In our situation the minimal flows in $S(\A)$ and $S(\B)$ are exactly the minimal left ideals. Their elements are exactly the almost periodic points. The next lemma is the main place where we use the assumption that $G\prec H$ in the combinatorial set-up and $M\prec^* N$ in the model theoretic set-up.
\begin{lemma}\label{2.13} (1) Assume $q\in S(\B)$ is a weak heir of $p\in S(\A)$. Then $r[cl(Hq)]=cl(Gp)$. If moreover $p$ is almost periodic, then $r[cl(Hq')]=cl(Gp)$ for every $q'\in cl(Hq)$.\\
(2) Assume $I\triangleleft_mS(\A)$. Then there is $I'\triangleleft_mS(\B)$ with $r[I']=I$. In particular, every almost periodic $p\in S(\A)$ extends to an almost periodic $q\in S(\B)$.
\end{lemma}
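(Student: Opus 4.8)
I would prove part (1) first, then derive part (2) from it. For part (1), the inclusion $r[cl(Hq)]\subseteq cl(Gp)$ is automatic: since $r$ is a continuous semigroup homomorphism (it respects left translation by $G$, and in fact $r(hq')=hr(q')$ for $h\in G$; more generally, for $h\in H$ the image $r(hq')$ lies in $S(\A)$), the image of the closed $H$-invariant set $cl(Hq)$ under the continuous map $r$ is a closed set containing $r(Hq)\supseteq Gr(q)=Gp$, hence contains $cl(Gp)$. Wait — I must be careful: $r(hq')$ for $h\in H\setminus G$ need not equal anything in $Gp$, but $r[cl(Hq)]$ is still a closed subset of $S(\A)$ containing $p=r(q)$; the point is rather to get the reverse inclusion. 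For $cl(Gp)\subseteq r[cl(Hq)]$: take any $p'\in cl(Gp)$, so $p'=\lim_i g_ip$ along some net $(g_i)$ in $G$. By compactness of $cl(Hq)$ (a closed subset of the compact space $S(\B)$), the net $(g_iq)$ has a convergent subnet $g_jq\to q'\in cl(Hq)$. Then $r(q')=\lim_j r(g_jq)=\lim_j g_jr(q)=\lim_j g_jp=p'$, using continuity of $r$ and the fact that $r$ commutes with left translation by elements of $G$. So $p'\in r[cl(Hq)]$, giving part (1)'s first assertion. Here the hypothesis that $q$ is a weak heir is not yet needed — only $p=r(q)$, which follows from Lemma 2.2.

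The weak heir hypothesis enters in the "moreover" clause. Suppose $p$ is almost periodic and $q'\in cl(Hq)$. I claim $q'$ is again a weak heir (of its restriction $r(q')$), and moreover $r(q')\in cl(Gp)$ with $cl(Gr(q'))=cl(Gp)$ by minimality of $cl(Gp)$. The key observation is that the set $WH_p(\B)$ is closed (Lemma 2.8), but that describes weak heirs of the \emph{fixed} $p$; what I actually need is that the property "being a weak heir over $\A$" is preserved under the relevant limits along $H$-translation. The clean route: by Proposition 2.9(1), $q$ is a weak heir over $\A$ iff $r(s*q)=r(s)*r(q)$ for all $s\in S(\B)$. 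For $q'=\lim_j h_jq$ with $h_j\in H$, write $q'=\hat{h}_j$-limit; using left-continuity of $*$ in the first coordinate and that $\hat{h}_j*q=h_jq$, one checks $s*q'=\lim_j s*(h_jq)$ is \emph{not} immediate since $*$ is only left-continuous. Instead I would argue: $WH(\B/\A)$ is a sub-semigroup of $S(\B)$ (Corollary 2.11) and contains $\hat{H}\supseteq\hat G$... no, $\hat H\not\subseteq WH(\B/\A)$ in general. The correct and standard move, following \cite[Lemma 2.4]{ne2}: pass to $cl(Hq)$, which is an $H$-subflow, and use that $r$ maps it onto $cl(Gp)$; since $cl(Gp)$ is minimal, for $q'\in cl(Hq)$ the subflow $cl(Hq')\subseteq cl(Hq)$ still surjects onto a non-empty subflow of $cl(Gp)$, which must be all of $cl(Gp)$ by minimality. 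That is: $r[cl(Hq')]$ is a non-empty closed $G$-invariant subset of $r[cl(Hq)]=cl(Gp)$, hence equals $cl(Gp)$. This uses only continuity and $G$-equivariance of $r$ plus minimality, and it does \emph{not} require $q'$ itself to be a weak heir — so the weak heir hypothesis is really only used to establish $r[cl(Hq)]=cl(Gp)$ in the first place (via $p=r(q)$ and the argument above). Good; that streamlines things.

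For part (2): given $I\triangleleft_m S(\A)$, pick any $p\in I$; then $p$ is almost periodic, so $cl(Gp)=I$. By Lemma 2.6, $p$ extends to a weak heir $q\in S(\B)$. Let $I'$ be a minimal left ideal of $S(\B)$ contained in $cl(Hq)$ (such exists by Ellis's theory, Fact 1.1, since $cl(Hq)$ is a closed sub-semigroup... actually $cl(Hq)$ is a left ideal of $S(\B)$: for $s\in S(\B)$, $s*q'=\lim_{h\to s}hq'\in cl(Hq)$ for $q'\in cl(Hq)$, wait need $cl(Hq)$ left-invariant under $*$, which holds since it is closed and $H$-invariant and $s*q'$ is a limit of $h q'$). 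Pick any $q''\in I'$. Since $q''\in cl(Hq)$ and $p=r(q)$ is almost periodic, part (1)'s "moreover" clause gives $r[cl(Hq'')]=cl(Gp)=I$. But $cl(Hq'')=I'$ since $I'$ is a minimal left ideal (every point of a minimal left ideal generates it). Hence $r[I']=I$. The final sentence follows by taking $q''\in I'$: it is almost periodic in $S(\B)$ and $r(q'')\in I$ can be any prescribed almost periodic point after translating, or more simply: every almost periodic $p\in S(\A)$ lies in some $I\triangleleft_m S(\A)$, and the surjection $r[I']=I$ provides an almost periodic preimage.

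The main obstacle I anticipate is the bookkeeping around which sets are left ideals versus merely sub-semigroups or subflows, and making sure $r$ is $G$-equivariant but the limits are taken along $H$ — the argument must carefully distinguish translation by $G$ (where $r$ commutes) from the semigroup operation $*$ on $S(\B)$ (where it need not interact simply with $r$ unless $q$ is a weak heir). Once the "weak heir $\Rightarrow$ $r[cl(Hq)]=cl(Gp)$" step is nailed down and combined with the minimality argument, the rest is a short Ellis-semigroup manipulation essentially copied from \cite[Lemma 2.4]{ne2}.
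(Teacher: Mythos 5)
There is a genuine gap in your treatment of the first assertion of (1). You prove only the inclusion $cl(Gp)\subseteq r[cl(Hq)]$ (the net/compactness argument is fine, and indeed uses only $p=r(q)$), but the statement is an equality, and the reverse inclusion $r[cl(Hq)]\subseteq cl(Gp)$ is neither automatic nor proved anywhere in your write-up: for $h\in H\setminus G$, or for a general $s\in S(\B)$, there is no reason that $r(hq)$, resp.\ $r(s*q)$, should lie in $cl(Gp)$ unless $q$ is a weak heir. Your opening claim that this inclusion is ``automatic'' is supported by an argument that in fact proves the opposite inclusion, and your later remark that ``the hypothesis that $q$ is a weak heir is not yet needed'' confirms the missing step was never supplied. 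This is exactly where the paper uses the hypothesis: since $*$ is left-continuous and $\hat{H}$ is dense in $S(\B)$, one has $cl(Hq)=S(\B)*q$ and likewise $cl(Gp)=S(\A)*p$; then Proposition \ref{2.9}(1) gives $r(s*q)=r(s)*r(q)=r(s)*p$ for every $s\in S(\B)$, and surjectivity of $r$ yields $r[cl(Hq)]=r[S(\B)*q]=S(\A)*p=cl(Gp)$, i.e., both inclusions at once.

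The remainder of your plan is sound and coincides with the paper's argument: for the ``moreover'' clause you (correctly, after a detour about whether $q'$ is again a weak heir) use that $r[cl(Hq')]$ is a nonempty closed $G$-invariant subset of the minimal flow $cl(Gp)$, hence equals it --- but note this presupposes $r[cl(Hq')]\subseteq r[cl(Hq)]=cl(Gp)$, which is precisely the inclusion you skipped; and part (2) (take a weak heir $q_0$ of some $p\in I$, choose $I'\triangleleft_m S(\B)$ inside the left ideal $cl(Hq_0)$, and apply the ``moreover'' clause to any point of $I'$, which generates $I'$) is the paper's proof essentially verbatim. So the proposal stands or falls with the missing inclusion above, and as written it does not establish the lemma.
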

\begin{proof}. (1) For $h\in H$ we have that $hq=\hat{h}*q$. Since $*$ is left-continuous and the set $\{\hat{h}:h\in H\}$ is dense in $S(\B)$, we have that $cl(Hq)=S(\B)*q$. Proposition \ref{2.9}(1) implies that
$$r[cl(Hq)]= r[S(\B)*q]=S(\A)*p=cl[Gp]$$

In the case where $p$ is almost periodic, $cl(Gp)$ is a minimal left ideal in $S(\A)$. Since $r[cl(Hq')]$ is a $G$-invariant closed subset of $cl(Gp)$, we get that $r[cl(Hq)]=cl(Gp)$.

(2) Let $p\in I$ and let $q_0\in S(\B)$ be a weak heir of $p$. Let $I'\triangleleft_mS(B)$ be a minimal subflow of $cl(Hq_0)$. By (1), $r[I']=I$.
\end{proof}

\section{Transfer between $S(\A)$ and $S(\B)$: the Ellis groups}
In this section we shall relate algebraically the Ellis groups of $S(\A)$ and $S(\B)$, under some additional assumptions. Let $I\triangleleft_mS(\A)$ and $u\in J(I)$. First we lift  the group $uI$ to a group in $S(\B)$, possibly not an Ellis group. We do it in two ways, using the coheir and weak heir extensions.

First let $\G=j[uI]$, where $j:S(\A)\to CH(\B/\A)$ is the $*$-isomorphism from Corollary \ref{2.11}. So $\G$ is a subgroup of $S(\B)$ such that $r:\G\to uI$ is a group isomorphism. The group $\G$ is determined by the choice of $u\in J(I)$, so we may write $\G=\G_u$. The groups $\G_u,u\in J(I),I\triangleleft_mS(\A)$ are all isomorphic and their choice is canonical.

Next, we find a subgroup $\Hh$ of $WH(\B/\A)$ with the property that $r:\Hh\to uI$ is a group epimorphism. In the model theoretic set-up this has been done in \cite[Section 3]{ne2}. The construction from \cite{ne2} is easily adapted to our combinatorial set-up. We describe it briefly here, referring the reader to \cite[Section 3]{ne2} for details.

By Lemma \ref{2.8} and Corollary \ref{2.11}, the set $WH_u(\B)$ is a closed subset of $S(\A)$ and a sub-semigroup. Fix an $I'\triangleleft_mWH_u(\B)$. By \cite[Lemma 3.3(2)]{ne2} there is a common kernel $K'\subseteq\B$ of all $d_{q'},q'\in I'$. Also let $\R'_u=\{\Img d_{q'}:q'\in I'\}$. By Fact \ref{1.1}, $I'$ is a disjoint union of isomorphic groups $u'I',u'\in J(I')$. By \cite[Lemma 3.3]{ne2} there is $I^+\triangleleft S(\B)$ with $I'=WH_u(\B)\cap I^+$ (in fact, $I^+=cl(Hs)$ for any $s\in I'$). Notice also that by Fact \ref{2.13}, $r[I^+]=I$.

By \cite[Lemma 3.6]{ne2}, for every $q\in I$ the set $I_q':= WH_q(\B)\cap I^+$ is nonempty and for every $q'\in I_q'$ we have that $\Ker d_{q'}=K'$. Also for every $q\in uI$ we have that $\R'_u=\R'_q$, where $\R'_q:=\{\Img d_{q'}: q'\in I_q'\}$. Now fix an idempotent $u'\in J(I')$ and let $R'=\Img d_{u'}$. Let
$$\Hh=\{q'\in\bigcup_{q\in uI}I_q': \Img d_{q'}=R'\}$$
Clearly, $\Hh\subseteq WH(\B/\A)\cap I^+$. By \cite[Proposition 3.7]{ne2}, $\Hh$ is a subgroup of $S(\B)$ and $r:\Hh\to uI$ is a group epimorphism, with kernel $u'I'$. The group $\Hh$ is determined by the choice of $u'$, hence we may write $\Hh$ as $\Hh_{u'}$.

Using $\Hh$, in \cite{ne2} we proved in the model-theoretic set-up that the Ellis  groups of $S_{ext,G}(M)$ are homomorphic images of some subgroups of the Ellis groups of $S_{ext,G}(N)$, under the assumption that there are generic types in $S_{ext,G}(N)$ (equivalently, in $S_{ext,G}(N)$ there is exactly one minimal left ideal \cite{ne1}).  We will recall this result later in the combinatorial set-up, with an easier proof. Notice that the choice of the group $\Hh=\Hh_{u'}$ is canonical. We mentioned it in \cite{ne2}, here we make this claim precise in Proposition \ref{3.0.5}, that is a structural result on the semigroup $WH(\B/\A)$ parallel to Fact \ref{1.1}. Let $I_0^+=WH(\B/\A)\cap I^+$. Hence $I_0^+\triangleleft WH(\B/\A)$.

\begin{lemma}\label{3.0.1} (1) $r[I^+]=r[I_0^+]=I$.\\
(2) For every $q'\in I^+_0,\ I^+=cl(Hq')$ and $\Ker d_{q'}=K'$.\\
(3) If $q\in I$, then $I_q'\neq\0$ and if $q\in J(I)$, then $I_q'\triangleleft_m WH_q(\B)$.\\
(4) For $q\in I$ let $\R_q'=\{\Img d_{q'}:q'\in I_q'\}$. If $u_0\in J(I)$ and $q\in u_0I$, then $\R_q'=\R_{u_0}'$.\\
(5) For $v_0\in J(I_0^+)$ let
$$\Hh_{v_0}=\{q'\in\bigcup_{q\in u_0I}I_q': \Img d_{q'}=\Img d_{v_0}\}$$
where $u_0=r(v_0)$. Then $u_0\in J(I),\ \Hh_{v_0}\subseteq I_0^+$ is a group and $I_0^+$ is a disjoint union of the groups $\Hh_{v_0},v_0\in J(I_0^+)$.\\
(6) If $v_0\neq v_1\in J(I_0^+)$, then $\Img d_{v_0}\neq \Img d_{v_1}$.\\
(7) The groups $\Hh_v,v\in J(I^+_0)$ are isomorphic and $\Hh_v=vI_0^+$.\\
(8) $I_0^+\triangleleft_m WH(\B/\A)$.
\end{lemma}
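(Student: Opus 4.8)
The plan is to treat parts (1)--(8) as a single structural package, mirroring the proof of Fact \ref{1.1} for the left-continuous (but possibly non-closed) semigroup $WH(\B/\A)$, while importing the relevant facts about $I^+$, $I'$, $K'$ and the fibers $I_q'$ from \cite[Lemmas 3.3, 3.6, Proposition 3.7]{ne2}. First I would record the basic relations: $I^+\triangleleft S(\B)$, $I'=WH_u(\B)\cap I^+$ is a minimal left ideal of the semigroup $WH_u(\B)$, $I_0^+=WH(\B/\A)\cap I^+$, and $r[I^+]=I$ by Lemma \ref{2.13}. Part (1) then follows once we see $r[I_0^+]=I$: given $q\in I$ pick a weak heir $q'$ of $q$ (Lemma \ref{2.6}); since $q'\in WH(\B/\A)$ and $cl(Hq')\subseteq I^+$ (because $q'\in cl(Hs)=I^+$ for $s\in I'$, using that $I'$ generates $I^+$ and weak heirs of elements of $I$ land in the ideal), we get $q'\in WH(\B/\A)\cap I^+=I_0^+$ with $r(q')=q$. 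For (2), $I^+$ is a minimal left ideal of $S(\B)$, so for any $q'\in I^+$ we have $cl(Hq')=S(\B)*q'=I^+$; and $\Ker d_{q'}=K'$ holds for $q'\in I_q'$ with $q\in I$ by \cite[Lemma 3.6]{ne2}, so it holds on all of $I_0^+=\bigcup_{q\in I}I_q'$. Part (3) restates \cite[Lemma 3.6]{ne2} together with the observation that $I_q'=WH_q(\B)\cap I^+$ is a left ideal of $WH_q(\B)$; minimality when $q\in J(I)$ comes from minimality of $I'$ in $WH_u(\B)$ transported by the group translation $\hat g$ for a suitable $g$, or directly from Lemma \ref{2.12}(3). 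Part (4) is the last assertion of the cited \cite[Lemma 3.6]{ne2}.

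For (5) I would argue as follows. Fix $v_0\in J(I_0^+)$; then $v_0$ is an idempotent of $S(\B)$ lying in $I^+$, and $u_0:=r(v_0)$ is an idempotent of $S(\A)$ in $I$, so $u_0\in J(I)$. The set $\Hh_{v_0}$ is exactly the group $\Hh_{u'}$ constructed in the body of the section, with $u'=v_0$ and $R'=\Img d_{v_0}$; by \cite[Proposition 3.7]{ne2} it is a subgroup of $S(\B)$ contained in $\bigcup_{q\in u_0I}I_q'\subseteq WH(\B/\A)\cap I^+=I_0^+$, and $r:\Hh_{v_0}\to u_0I$ is a group epimorphism with kernel $v_0I'$. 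To see $I_0^+$ is the disjoint union of the $\Hh_{v_0}$: every $q'\in I_0^+$ satisfies $r(q')\in u_0I$ for a unique $u_0\in J(I)$ (by Fact \ref{1.1}(2) applied to $I$), and within the fiber over that component $q'$ belongs to $\Hh_{v_0}$ for the unique $v_0\in J(I_0^+)$ with $r(v_0)=u_0$ and $\Img d_{v_0}=\Img d_{q'}$ --- such a $v_0$ exists because by (4) the set of images $\{\Img d_{q'}:q'\in I_q'\}$ is the same for all $q$ in a fixed component and each such image is realized by some idempotent (run the argument producing $u'$ in the construction, or apply Fact \ref{1.4}(3)). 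Uniqueness of $v_0$ and disjointness then reduce to (6).

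Part (6): if $v_0,v_1\in J(I_0^+)$ have $\Img d_{v_0}=\Img d_{v_1}$, I would show $v_0=v_1$. If $r(v_0)\neq r(v_1)$ then $v_0,v_1$ lie in different components of $I$; but then $\Ker d_{v_0}\neq\Ker d_{v_1}$ would force a contradiction only via images, so instead note $\Ker d_{v_i}=K'$ for both by (2), hence $R(uI)\cong \B/K'$-type data --- more directly, apply Fact \ref{1.4}(3) inside $S(\B)$: for a fixed minimal left ideal and a fixed image there is a unique idempotent. Since $v_0,v_1\in I^+$ with the same image, Fact \ref{1.4}(3) gives $v_0=v_1$, which in particular also pins down $r(v_0)=r(v_1)$. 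Then (7): the groups $\Hh_v$ are pairwise isomorphic because they all surject onto copies of $u_0I$ with isomorphic kernels (the kernels $vI'$ are isomorphic by Fact \ref{1.1}(3) applied inside $I^+$, or inside $I'$), and the identity $\Hh_v=vI_0^+$ follows from the semigroup computation: $v*I_0^+\subseteq \Hh_v$ since left-multiplying by the idempotent $v$ fixes the image $\Img d_v$, and conversely any $q'\in\Hh_v$ satisfies $v*q'=q'$ because $d_v$ is the identity on $\Img d_{q'}=\Img d_v$ and $q'$ is determined by its value there, so $q'\in v*I_0^+$. Finally (8): $I_0^+$ is a left ideal of $WH(\B/\A)$ by construction; minimality follows because any nonempty left ideal $L\subseteq I_0^+$ of $WH(\B/\A)$ projects under $r$ to a nonempty left ideal of $S(\A)$ contained in $I$ (using Corollary \ref{2.11} that $r$ is an epimorphism on weak heirs), hence equal to $I$ by minimality of $I$; combined with (5) and the fiber description, $L$ must meet every $\Hh_{v_0}$ and, being a left ideal, absorb it, so $L=I_0^+$.

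The main obstacle I expect is part (6) together with the uniqueness/disjointness half of (5): establishing that the image $\Img d_{v_0}$ of an idempotent $v_0\in J(I_0^+)$, taken inside the non-closed semigroup $WH(\B/\A)$, still rigidly determines $v_0$. The clean route is to embed the question into the ambient \emph{closed} semigroup $S(\B)$ via $I_0^+\subseteq I^+$ and apply Fact \ref{1.4}(3) there; the delicate point is checking that an idempotent of $S(\B)$ lying in $I^+$ whose $r$-image is idempotent and which is a weak heir is \emph{the same} object as an idempotent of the abstract semigroup $WH(\B/\A)$, i.e. that the two notions of "idempotent in $I_0^+$'' coincide --- this is where one must carefully use that $*$ restricted to $WH(\B/\A)$ is the restriction of $*$ on $S(\B)$ (Corollary \ref{2.11}) and that $I^+=cl(Hq')$ for $q'\in I_0^+$ from part (2).
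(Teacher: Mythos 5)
Your overall architecture matches the paper's (import \cite[Lemmas 3.3, 3.6, Proposition 3.7]{ne2}, exploit the symmetric role of $u_0$ and $u$, decompose $I_0^+$ into the groups $\Hh_{v_0}$, then get minimality), but there is a genuine error at the point you yourself flag as delicate. You assert that $I^+$ is a minimal left ideal of $S(\B)$ and use this twice: in (2), to get $cl(Hq')=I^+$ for $q'\in I^+$, and in (6), to apply Fact \ref{1.4}(3) ``inside $S(\B)$'' to conclude that two idempotents of $I_0^+$ with the same image coincide. But the construction only gives $I^+\triangleleft S(\B)$, not $\triangleleft_m$: $I^+=cl(Hs)$ where $s$ is minimal merely in the small compact semigroup $WH_u(\B)$, and elements of $I^+$ need not be almost periodic in $S(\B)$ at all (see the definably compact example at the end of Section 4, where weak heirs of almost periodic types are never almost periodic). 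So Fact \ref{1.4}(3) simply does not apply to $v_0,v_1\in J(I_0^+)$, and your route to (6) and to the disjointness half of (5) collapses. Note also that statement (2) is deliberately restricted to $q'\in I_0^+$ (it is quoted from \cite[Lemma 3.6]{ne2}); if $I^+$ were minimal it would hold trivially for all of $I^+$.

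The paper's actual argument avoids $S(\B)$-minimality entirely: disjointness in (5) is split into two cases --- if $r(v_0)\neq r(v_1)$ one uses that $r[\Hh_{v_i}]=u_iI$ are disjoint, and if $r(v_0)=r(v_1)=u_0$ one works inside the compact left-continuous semigroup $WH_{u_0}(\B)$, where $I'_{u_0}\triangleleft_m WH_{u_0}(\B)$ and distinct idempotents of that minimal ideal have distinct images; then (6) follows from the disjointness just proved together with $\Ker d_{v_0}=\Ker d_{v_1}=K'$ (an idempotent endomorphism with kernel $K'$ whose image is a section of the $K'$-cosets is determined by that image, a Fact \ref{1.4}(2)-type argument). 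Your proof of (6) could be repaired along these lines, but not as written. A smaller gap: in (1) you claim that an arbitrary weak heir of $q\in I$ lies in $I^+$ (``weak heirs of elements of $I$ land in the ideal''), which is unjustified and false in general; the correct route, and the paper's, is through (3), i.e. the non-emptiness of $I'_q=WH_q(\B)\cap I^+$ from \cite[Lemma 3.6]{ne2}, combined with $r[I^+]=I$ from Lemma \ref{2.13}. Your treatment of (7) and (8) is essentially sound (the identity $\Hh_v=vI_0^+$ via ``$d_v$ is the identity on its image'' and the absorption argument for minimality both work), though the paper's proof of (8) is more direct: for $x,y\in I_0^+$ it exhibits $z\in I_0^+$ with $y=zx$ using the group structure of the $\Hh_{v}$'s.
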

\begin{proof}. (2)  and (3) are \cite[Lemma 3.6(1),(2)]{ne2}. Together with Lemma \ref{2.13} they imply (1).

(4) is \cite[Lemma 3.6(3)]{ne2} when $u_0=u$. However every $u_0\in J(I)$ can play the role of $u$ in generating $I^+$. Namely, by (3) $I_{u_0}'\triangleleft _m WH_{u_0}(\B)$ and by (2), $I^+=cl(Hu_0')$ for any $u_0'\in J(I_{u_0}')$. So the roles of $u$ and $u_0$ in $I^+$ are symmetrical hence \cite[Lemma 3.6(3)]{ne2} holds for arbitrary $u_0\in J(I)$. So (4) follows.

(5) Let $v_0\in J(I_0^+)$ and $u_0=r(v_0)$. We have that $u_0\in J(I)$ because $r:I_0^+\to I$ is a $*$-homomorphism. That $\Hh_{v_0}$ is a group with $r:\Hh_{v_0}\to u_0I$ being a group epimorphism follows from \cite[Proposition 3.7]{ne2} and the symmetrical role of $u_0$ and $u$ in $I^+$.

To see that the groups $\Hh_{v_0},v_0\in J(I_0^+)$ are disjoint, consider $v_0\neq v_1\in J(I_0^+)$. Let $u_i=r(v_i)\in J(I),\ i=0,1$.

If $u_0\neq u_1$, then $r[\Hh_{v_0}]=u_0I$, $r[\Hh_{v_1}]=u_1I$ and $u_0I\cap u_1I=\0$, hence also $\Hh_{v_0}\cap \Hh_{v_1}=\0$.

If $u_0=u_1$, then $v_0,v_1\in J(I_{u_0}')$ and $I_{u_0}'\triangleleft_m WH_{u_0}(\B)$, so $v_0\neq v_1$ implies $\Img d_{v_0}\neq\Img d_{v_1}$. But $\Img d_{v_i}=\Img d_x$ for every $x\in \Hh_{v_i}$, hence $\Hh_{v_0}$ and $\Hh_{v_1}$ are disjoint also in this case.

By (4) every $q'\in I_0^+$ belongs to some $\Hh_{v_0},v_0\in J(I_0^+)$, so we are done.

(6) Assume $v_0\neq v_1\in J(I_0^+)$. By (5) the groups $\Hh_{v_0}$ and $\Hh_{v_1}$ are disjoint and $\Ker d_{v_0}=\Ker d_{v_1}=K'$, hence by Fact \ref{1.4}, $\Img d_{v_0}\neq\Img d_{v_1}$.

(7) Let $v_0\neq v_1\in J(I_0^+)$. Since $K(\Hh_{v_0})=K(\Hh_{v_1})=K'$, the groups $\Hh_{v_i}$ may be regarded as subgroups of $\Aut(\B/K')$ (via the functiom $d$, since $R(\Hh_{v_i})$ is a section of the family of $K'$-cosets in  $\B$). Hence the function mapping $x$ to $v_1x$ is a group isomorphism $\Hh_{v_0}\to \Hh_{v_1}$. We see also that $\Hh_{v_0}=v_0I_0^+$.

(8) Let $x,y\in I_0^+$. Choose $v_0,v_1\in J(I_0^+)$ such that $x\in \Hh_{v_0}$ and $y\in \Hh_{v_1}$. It is enough to show that $y=zx$ for some $z\in I_0^+$.

When $v_0=v_1$, then $x,y\in \Hh_{v_0}$ and $z=yx^{-1}$ calculated in the group $\Hh_{v_0}$ will do. Otherwise, $v_1x\in \Hh_{v_1}$ and $z=z_0v_1$ will do, where $z_0=y(v_1x)^{-1}$ calculated in the group $\Hh_{v_1}$.
\end{proof}
\begin{lemma}\label{3.0.2} Assume $I^*\triangleleft_m WH(\B/\A)$. Then $I^*$ is of the form $I_0^+$ for some $I\triangleleft_m S(\A),\ u\in J(I),\ I'\triangleleft_m WH_u(\B)$ and $u'\in J(I')$.
\end{lemma}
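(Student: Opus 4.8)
The plan is to invert the construction of $I_0^+$ described just before Lemma~\ref{3.0.1}: starting from $I^*\triangleleft_m WH(\B/\A)$, I will exhibit data $I\triangleleft_m S(\A)$, $u\in J(I)$, $I'\triangleleft_m WH_u(\B)$, $u'\in J(I')$ from which that construction returns precisely $I_0^+=I^*$.

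First I would put $I:=r[I^*]$. Since $r:WH(\B/\A)\to S(\A)$ is a $*$-epimorphism (Corollary~\ref{2.11}), $I$ is a left ideal of $S(\A)$. For any $p\in I$ choose $q^*\in I^*$ with $r(q^*)=p$; minimality of $I^*$ gives $I^*=WH(\B/\A)*q^*$, hence $I=r[WH(\B/\A)*q^*]=S(\A)*p$, which equals $cl(Gp)$ by left-continuity of $*$ and density of the translations $l_g$, $g\in G$ (exactly as in the proof of Lemma~\ref{2.13}). Taking $p$ inside a minimal subflow of $I$ forces $I=cl(Gp)$ to be that minimal subflow, so $I\triangleleft_m S(\A)$, and by Fact~\ref{1.1}(1) we may pick $u\in J(I)$. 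As $u$ is an idempotent of $S(\A)$, the set $WH_u(\B)$ is a closed sub-semigroup of $WH(\B/\A)$ (Lemma~\ref{2.8}, Corollary~\ref{2.11}), hence a compact Hausdorff left-continuous semigroup.

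Next, since $r[I^*]=I\ni u$ there is $w\in I^*$ with $r(w)=u$, so $w\in I^*\cap WH_u(\B)$; this intersection is a nonempty left ideal of $WH_u(\B)$, because $WH_u(\B)*(I^*\cap WH_u(\B))\subseteq WH_u(\B)\cap(WH(\B/\A)*I^*)\subseteq I^*\cap WH_u(\B)$. Hence it contains a minimal left ideal $I'\triangleleft_m WH_u(\B)$ (every left ideal of a compact Hausdorff left-continuous semigroup contains one), and by Fact~\ref{1.1}(1) we may take $u'\in J(I')$; note $u'\in I'\subseteq I^*\cap WH_u(\B)$, so $r(u')=u$ and $u'$ is an idempotent lying in $I^*$. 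Minimality of $I'$ gives $I'=WH_u(\B)*u'$, so every $s\in I'$ satisfies $s*u'=s$. Feeding $(I,u,I',u')$ into the construction yields $I^+=cl(Hs)=S(\B)*s$ (for $s\in I'$) and $I_0^+=WH(\B/\A)\cap I^+$. To see $I^*=I_0^+$, fix $s\in I'\subseteq I^*$: minimality of $I^*$ gives $I^*=WH(\B/\A)*s\subseteq S(\B)*s=I^+$, and $I^*\subseteq WH(\B/\A)$, so $I^*\subseteq I_0^+$. Conversely, any $q\in I_0^+\subseteq cl(Hs)=S(\B)*s$ has the form $q=x*s$ with $x\in S(\B)$, whence $q*u'=x*(s*u')=x*s=q$; since $q\in WH(\B/\A)$ and $u'\in I^*\triangleleft WH(\B/\A)$ we get $q=q*u'\in I^*$. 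Thus $I_0^+\subseteq I^*$, so $I^*=I_0^+$, as required.

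The main obstacle I anticipate is the bookkeeping forced by the rigidity of the construction template, in particular the fact that a freely chosen idempotent $u\in J(I)$ need not be almost periodic inside $WH_u(\B)$: one cannot simply pick an idempotent in $I^*\cap WH_u(\B)$ and take its orbit closure, but must descend to a genuine minimal left ideal $I'$ of $WH_u(\B)$ and then exploit the identity $s*u'=s$ (rather than idempotency of any original generator) to recover $I^*$. A secondary point requiring care is checking that the stated properties of the construction — especially the identifications $I^+=cl(Hs)$ and $I_0^+=WH(\B/\A)\cap I^+$, and the relation $I'=WH_u(\B)\cap I^+$ — apply verbatim to the tuple $(I,u,I',u')$ produced above.
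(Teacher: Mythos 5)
Your proof is correct and follows essentially the same construction as the paper: set $I=r[I^*]$, pick $u\in J(I)$, take $I'\triangleleft_m WH_u(\B)$ inside the nonempty left ideal $I^*\cap WH_u(\B)$ of the compact semigroup $WH_u(\B)$, and pick $u'\in J(I')$. The only real difference is the concluding identification $I^*=I_0^+$: the paper gets $I_0^+=WH(\B/\A)*u'$ by appealing to Lemma~\ref{3.0.1} (in particular the minimality of $I_0^+$) and then uses minimality of $I^*$, whereas you check both inclusions directly --- $I^*=WH(\B/\A)*s\subseteq S(\B)*s=I^+$ for one, and $q=q*u'\in I^*$ for every $q\in I_0^+$ (via $s*u'=s$) for the other --- which makes the verification self-contained and also supplies details the paper leaves implicit (that $r[I^*]$ is a \emph{minimal} left ideal and that $I^*\cap WH_u(\B)$ contains a minimal left ideal of $WH_u(\B)$).
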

\begin{proof}. Let $I=r[I^*]$. Since $r:WH(\B/\A)\to S(\A)$ is a $*$-epimorphism, we have that $I\triangleleft_m S(\A)$. Choose $u\in J(I)$ and then $I'\subseteq I^*\cap WH_u(\B)$ with $I'\triangleleft_m WH_u(\B)$. Let $u'\in J(I'),\ I^+=cl(Hu')$ and $I_0^+=I^+\cap WH(\B/\A)$. By Lemma \ref{3.0.1} and the minimality of $I^*$ we have that $I_0^+=WH(\B/\A)u'=I^*$.
\end{proof}
\begin{lemma}\label{3.0.3} Assume $I_i^+\triangleleft_m WH(\B/\A)$ and $\R_i=\{\Img d_{q'}:q'\in I_i^+\},i=0,1$. \\
(1) If $R_0,R_1\in\R_i$ and $R_0\subseteq R_1$, then $R_0=R_1$.\\
(2) $\R_0=\R_1$.
\end{lemma}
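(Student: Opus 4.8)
The plan is to treat the two parts separately, leaning on the structure of minimal left ideals of $WH(\B/\A)$ supplied by Lemmas \ref{3.0.1} and \ref{3.0.2}. The preliminary observation is that, by Lemma \ref{3.0.2}, each $I_i^+$ is of the form described in Lemma \ref{3.0.1}; hence each $I_i^+$ is a disjoint union of the groups $\Hh_v=vI_i^+$, $v\in J(I_i^+)$, all $d_{q'}$ with $q'\in I_i^+$ share a common kernel $K_i'\subseteq\B$, and every $q'\in I_i^+$ lies in some $\Hh_v$, so $\Img d_{q'}=\Img d_v=R(\Hh_v)$ by Fact \ref{1.4}(2). In particular $\R_i=\{\Img d_v:v\in J(I_i^+)\}$.

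For (1): by Fact \ref{1.4}(2) every member of $\R_i$ is a section of the family of $K_i'$-cosets in $\B$, i.e.\ it meets each such coset in exactly one point. Since the $K_i'$-cosets partition $\B$, an inclusion $R_0\subseteq R_1$ between two such sections forces $R_0\cap C=R_1\cap C$ for every coset $C$, hence $R_0=R_1$.

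For (2) it suffices, by symmetry, to prove $\R_0\subseteq\R_1$. Fix $R\in\R_0$ and write $R=\Img d_{v_0}$ with $v_0\in J(I_0^+)$. I will produce an idempotent $v_1'\in J(I_1^+)$ with $v_0v_1'=v_1'$ and $v_1'v_0=v_0$. Granting this, Fact \ref{1.3}(2) gives $d_{v_1'}=d_{v_0}\circ d_{v_1'}$ and $d_{v_0}=d_{v_1'}\circ d_{v_0}$, so $\Img d_{v_1'}=d_{v_0}[\Img d_{v_1'}]\subseteq\Img d_{v_0}=R$ and $R=\Img d_{v_0}=d_{v_1'}[\Img d_{v_0}]\subseteq\Img d_{v_1'}$; hence $R=\Img d_{v_1'}\in\R_1$, as wanted. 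To build $v_1'$: since $v_0\in I_0^+$, $WH(\B/\A)$ is a semigroup, and $I_0^+$ is minimal, $I_1^+v_0$ is a nonempty left ideal of $WH(\B/\A)$ contained in $I_0^+$, so $I_1^+v_0=I_0^+$; pick $w\in I_1^+$ with $wv_0=v_0$ and let $v_1\in J(I_1^+)$ be the identity of the group from Lemma \ref{3.0.1}(5) containing $w$. Then $v_1v_0=(v_1w)v_0=wv_0=v_0$, and a direct computation shows $v_1':=v_0v_1\in I_1^+$ is idempotent and satisfies $v_0v_1'=v_1'$, $v_1'v_0=v_0$.

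The step I expect to be delicate is precisely this last construction. The naive ``equivalent idempotent'' of $v_0$ in $I_1^+$ only yields $\Img d_{v_0}\cong\Img d_{v_1}$ rather than equality; equality is forced only once the mutual absorptions are arranged in the orientation $v_0v_1'=v_1'$ and $v_1'v_0=v_0$, which is why one must pass from $v_1$ to $v_1'=v_0v_1$. A secondary point requiring care is that $WH(\B/\A)$ need not be closed, so Fact \ref{1.1} is unavailable directly; but Lemma \ref{3.0.1} already provides the structure used above (existence of idempotents, partition into groups), and the minimal-ideal absorption $I_1^+v_0=I_0^+$ is checked by hand rather than via Fact \ref{1.1}(4).
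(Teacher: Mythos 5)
Your proof is correct, and while part (1) coincides with the paper's argument (both members of $\R_i$ are sections of the family of cosets of the common kernel, by Lemma \ref{3.0.1}(2) and Fact \ref{1.4}(2), and an inclusion between two sections of the same coset family forces equality), your part (2) follows a genuinely different route. The paper fixes idempotents $v_i\in J(I_i^+)$ and, for an \emph{arbitrary} $x\in I_0^+$, uses the chain $\Img d_{xv_1v_0}\subseteq\Img d_{xv_1}\subseteq\Img d_x$ coming from $d_{p*q}=d_p\circ d_q$; since the two outer terms lie in $\R_0$, part (1) collapses the chain and the middle term, which lies in $\R_1$, equals $\Img d_x$. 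You instead first reduce to idempotents via the decomposition of $I_0^+$ into the groups $\Hh_v$ of Lemma \ref{3.0.1}(5) (legitimate, since all elements of $\Hh_v$ share the image $\Img d_v$), and then, for a given $v_0\in J(I_0^+)$, construct an explicit partner idempotent $v_1'=v_0v_1\in J(I_1^+)$ with $v_0v_1'=v_1'$ and $v_1'v_0=v_0$, using the by-hand observation $I_1^+v_0=I_0^+$ (which correctly avoids Fact \ref{1.1}(4), unavailable since $WH(\B/\A)$ need not be compact) to find $w\in I_1^+$ with $wv_0=v_0$ and hence $v_1v_0=v_0$. The mutual absorption then gives $\Img d_{v_1'}=\Img d_{v_0}$ directly from Fact \ref{1.3}(2), so your proof of (2) does not invoke (1) at all. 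What the paper's squeeze buys is brevity and the ability to handle arbitrary elements of $I_0^+$ without the group decomposition; what your construction buys is a sharper conclusion, namely for each idempotent of $I_0^+$ an explicit idempotent of $I_1^+$ with literally the same image (and, by Lemma \ref{3.0.1}(2), with the kernel of $I_1^+$), which is essentially the matching of idempotents that the paper only extracts later, in the proof of Lemma \ref{3.0.4}.
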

\begin{proof}. (1) Let $K$ be the common kernel of $d_x,x\in I^+_i$. By Fact \ref{1.4}, the algebras $R_0,R_1$ are sections of the family of $K$-cosets in $\B$, so $R_0\subseteq R_1$ implies $R_0=R_1$.

(2) Chooose $v_i\in J(I_i^+),i=0,1$. Let $x\in I_0^+$. Hence $xv_1\in I_1^+$ and $xv_1v_0\in I_0^+$. Therefore $\Img d_x,\Img d_{xv_1v_0}\in\R_0$ and $\Img d_{xv_1}\in\R_1$. We have that
$$\Img d_{xv_1v_0}\subseteq\Img d_{xv_1}\subseteq \Img d_x$$
hence $\Img d_{xv_1v_0}\subseteq\Img d_x$ and by (1) $\Img d_{xv_1v_0}=\Img d_x$. Therefore also $\Img d_x=\Img d_{xv_1}\in \R_1$. So $\R_0\subseteq \R_1$. By symmetry, $\R_0=\R_1$.
\end{proof}
\begin{lemma}\label{3.0.4} Assume for $i=0,1,\ I_i^+\triangleleft_m WH(\B/\A),\ v_i\in J(I_i^+)$ and let $\Hh_i=\Hh_{v_i}=v_iI_i^+$, $I_i=r[I_i^+]\triangleleft_m S(\A)$ and $u_i=r(v_i)\in J(I_i)$. Then there are group isomorphisms $f:\Hh_0\to \Hh_1$ and $\delta:u_0I_0\to u_1I_1$ such that the following diagram commutes

\begin{center}\begin{tikzcd} \Hh_0 \arrow{r}{f} \arrow{d}{r} & \Hh_1 \arrow{d}{r}\\
u_0I_0 \arrow{r}{\delta} & u_1I_1
\end{tikzcd}
\end{center}
\end{lemma}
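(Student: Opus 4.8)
We want group isomorphisms $f:\Hh_0\to\Hh_1$ and $\delta:u_0I_0\to u_1I_1$ making the square commute. The natural candidate for $f$ is left multiplication in $S(\B)$ by $v_1$, i.e.\ $f(x)=v_1x$. Indeed, by Lemma \ref{3.0.1}(7), $\Hh_0=v_0I_0^+$ and $\Hh_1=v_1I_1^+$; and a standard argument (exactly as in the proof of Lemma \ref{3.0.1}(7), using that all the relevant kernels agree with $K'$ by Lemma \ref{3.0.1}(2), so the $\Hh_i$ embed into $\Aut(\B/K')$ via $d$) shows that $x\mapsto v_1x$ is a group homomorphism $\Hh_0\to\Hh_1$. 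To see it is a bijection, note that $x\mapsto v_0x$ maps $\Hh_1$ back into $\Hh_0$, and $v_0(v_1x)=x$ for $x\in\Hh_0$ because $v_0$ acts as the identity on $\Hh_0$ and $d_{v_0}$ restricted to $R(\Hh_0)$ is the identity automorphism; symmetrically $v_1(v_0y)=y$ for $y\in\Hh_1$. So $f$ is a group isomorphism with inverse $y\mapsto v_0y$.

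**Descending to $S(\A)$.** Since $r:WH(\B/\A)\to S(\A)$ is a $*$-epimorphism (Corollary \ref{2.11}), it carries $\Hh_0=v_0I_0^+$ onto $r(v_0)r[I_0^+]=u_0I_0$ and likewise $\Hh_1$ onto $u_1I_1$; moreover $r:\Hh_i\to u_iI_i$ is a group epimorphism (this is the content of the $\Hh$-construction recalled before Lemma \ref{3.0.1}, via \cite[Proposition 3.7]{ne2}, with kernel $v_iI_i'$). Now define $\delta:u_0I_0\to u_1I_1$ by $\delta(a)=u_1a$, left multiplication in $S(\A)$ by $u_1$. The same reasoning as above — applied now inside $S(\A)$, using Fact \ref{1.4}(2) so that $u_0I_0,u_1I_1$ embed into $\Aut(\A/K)$ for the common kernel $K$ of the minimal left ideals' members, and noting $u_0,u_1$ act as identities on $u_0I_0,u_1I_1$ respectively — shows $\delta$ is a group isomorphism with inverse $b\mapsto u_0b$. (If $I_0=I_1$ this is automatic; if $I_0\ne I_1$ one uses that $u_1$ still acts identically on $R(u_1I_1)$.)

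**Commutativity.** It remains to check $r\circ f=\delta\circ r$, i.e.\ $r(v_1x)=u_1\,r(x)$ for every $x\in\Hh_0$. Since $r$ is a $*$-homomorphism, $r(v_1x)=r(v_1)*r(x)=u_1*r(x)=\delta(r(x))$, and we are done. This is the clean point: once $f$ and $\delta$ are both realized as left translations by $v_1$ and $u_1=r(v_1)$ respectively, commutativity of the square is just functoriality of $r$ for $*$.

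**Main obstacle.** The only genuinely delicate point is verifying that $f(x)=v_1x$ really is a well-defined group \emph{isomorphism} $\Hh_0\to\Hh_1$ rather than merely a homomorphism with image a subgroup — i.e.\ that $v_1x\in\Hh_1$ (not just in $I_0^+$ or in some other $\Hh_{v}$) and that $v_0$ inverts it. This rests on the fact that all idempotents $v_0,v_1$ under consideration have the \emph{same} kernel $K'$ (Lemma \ref{3.0.1}(2)), so that multiplication moves between the groups $\Hh_{v}$ only by changing the image $R(\Hh_v)=\Img d_v$, and then $v_1x$ has image $\Img d_{v_1}$, placing it in $\Hh_1$; the same phenomenon in $S(\A)$ makes $\delta$ work. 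I would isolate this as the key lemma-step and, if needed, cross-reference the analogous computation already carried out in the proof of Lemma \ref{3.0.1}(7)–(8), since the argument is identical in form.
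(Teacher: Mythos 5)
There is a genuine gap: your map $f(x)=v_1x$ is not well defined as a map into $\Hh_1$ in the general case. The lemma allows $I_0^+\neq I_1^+$, and since $I_0^+$ is a left ideal of $WH(\B/\A)$ and $v_1\in WH(\B/\A)$, for $x\in\Hh_0\subseteq I_0^+$ we get $v_1x\in I_0^+$; distinct minimal left ideals are disjoint, so $v_1x\notin I_1^+\supseteq\Hh_1$. The premise you lean on --- that $v_0$ and $v_1$ share the common kernel $K'$ --- is exactly what fails across distinct minimal ideals: Lemma \ref{3.0.1}(2) gives a constant kernel only within a single $I_i^+$, and by Proposition \ref{3.0.5}(5) (equivalently, Lemma \ref{3.0.1}(2) together with Fact \ref{1.4}(1)) distinct minimal left ideals of $WH(\B/\A)$ are determined by, hence have distinct, kernels. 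The same problem affects $\delta(a)=u_1a$: for $a\in u_0I_0$ one has $u_1a\in I_0$, so $\delta$ misses $u_1I_1$ whenever $I_0\neq I_1$; your parenthetical remark that ``$u_1$ still acts identically on $R(u_1I_1)$'' does not repair this, since the image of the map is simply in the wrong ideal. Your argument is correct precisely in the special case $I_0^+=I_1^+$ (equivalently $K_0=K_1$), where it reproduces Lemma \ref{3.0.1}(7) and Case 1 of the paper's proof; the final commutativity computation via $r$ being a $*$-homomorphism is fine but cannot save maps that are not defined into the right targets.

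The paper's proof supplies the missing ingredient by factoring through an intermediate idempotent. By Lemma \ref{3.0.3} there is $v_2\in J(I_0^+)$ with $\Img d_{v_2}=\Img d_{v_1}$ (and automatically $\Ker d_{v_2}=K_0$). The passage $\Hh_0\to\Hh_{v_2}$ is by left translation $x\mapsto v_2x$ inside the single ideal $I_0^+$, which is where your argument legitimately applies; the passage $\Hh_{v_2}\to\Hh_1$, between possibly different ideals, is by \emph{right} translation $x\mapsto xv_1$, using that equality of images forces $v_2v_1=v_1$ and $v_1v_2=v_2$, with the analogous maps $x\mapsto u_2x$ and $x\mapsto xu_1$ on the $S(\A)$ level; the required square then commutes for the composition. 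Without this right-translation step (or some other device for moving between distinct minimal ideals while keeping track of kernels and images) your construction does not prove the lemma.
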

\begin{proof}.
Let $K_i=\Ker d_{v_i}$ and $R_i=\Img d_{v_i}, i=0,1$. First we consider two special cases.

{\em Case 1.} $K_0=K_1$. In this case by Fact \ref{1.4} and Lemma \ref{3.0.1}(2) we have that $cl(Hv_0)=cl(Hv_1)$ and
$$I_0^+=cl(Hv_0)\cap WH(\B/\A)=cl(Hv_1)\cap WH(\B/\A)=I_1^+$$
Hence also $I_0=r[I_0^+]=r[I_1^+]=I_1$.
By Fact \ref{1.4}(2) and Lemma \ref{3.0.1}(5), the groups $\Hh_i$ are isomorphic to some subgroups of $\Aut(R_i)\cong\Aut(\B/K_i)$. Hence, considering the elements of $\Hh_i$ as automorphisms of $\B/K_i$ (via the function $d$) we see that the functions $f:\Hh_0\to \Hh_1$ and $\delta:u_0I_0\to u_1I_1$ defined by  $f(x)=v_1x$ and $\delta(x)=u_1x$ are group isomorphisms and the diagram from the lemma commutes.

{\em Case 2.} $R_0=R_1$. In this case we have that $v_0v_1=v_1$ and $v_1v_0=v_0$. Therefore by Corollary \ref{2.11}, $u_0u_1=u_1$ and $u_1u_0=u_0$, which gives that also $\Img d_{u_0}=\Img d_{u_1}$. Here the groups $\Hh_i$ are isomorphic to some subgroups of $\Aut(R_0)=Aut(R_1)$, the functions $f:\Hh_0\to \Hh_1$ and $\delta:u_0I_0\to u_1I_1$ defined by $f(x)=xv_1$ and $\delta(x)=xu_1$ are group isomorphisms and the diagram from the lemma commutes.

In general, by Lemma \ref{3.0.3} there is $v_2\in J(I_0^+)$ with $\Img d_{v_2}=R_1$ and $\Ker d_{v_2}=K_0$. Let $\Hh_2=\Hh_{v_2}=v_2I_0^+$ and $u_2=r(v_2)\in J(I_0)$. By Cases 1 and 2 we get group isomorphisms $f_0:\Hh_0\to\Hh_2$, $\delta_0:u_0I_0\to u_2I_0$, $f_1:\Hh_2\to\Hh_1$ and $\delta_1: u_2I_0\to u_1I_1$ such that the corresponding diagrams commute. Then the functions $f=f_1\circ f_0$ and $\delta=\delta_1\circ \delta_0$ satisfy our demands.
\end{proof}

The next proposition summarizes the previous four lemmas.
\begin{proposition}\label{3.0.5} There are minimal left ideals in $WH(\B/\A)$. Assume $I^*\triangleleft_m WH(\B/\A)$. Then the following hold.\\
(1) The set $J(I^*)$ is nonempty.\\
(2) $I^*$ is a disjoint union of groups $u'I^*,u'\in J(I^*)$.\\
(3) The groups of the form $u'I^*$, where $I^*\triangleleft_m WH(\B/\A)$ and $u'\in J(I^*)$, are isomorphic.\\
(4) Assume $u'\in J(I^*)$. Then the restriction function $r:u'I^*\to uI$ is a group epimorphism, where $I=r[I^*]\triangleleft_m S(\A)$ and $u=r(u')\in J(I)$. Also $\Ker (r) = I'\triangleleft_m WH(\B/\A)$, where $I'=I^*\cap WH_u(\B)$.\\
(5) The minimal ideals $I^*\triangleleft_m WH(\B/\A)$ are relatively closed in $WH(\B/\A)$ and determined by the common kernel of $d_x,x\in I^*$.
\end{proposition}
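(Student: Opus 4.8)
The plan is to assemble Lemmas \ref{3.0.1}--\ref{3.0.4}, of which the proposition is a summary. The first task is the existence of minimal left ideals in $WH(\B/\A)$; this does \emph{not} follow from Fact \ref{1.1} directly, since $WH(\B/\A)$ need not be closed. Instead I would build one from inside $S(\A)$: pick $I\triangleleft_m S(\A)$ (available by compactness of $S(\A)$) and $u\in J(I)$; by Lemma \ref{2.8} the set $WH_u(\B)$ \emph{is} a closed sub-semigroup of $S(\B)$, so Fact \ref{1.1} applies to it and furnishes $I'\triangleleft_m WH_u(\B)$ and $u'\in J(I')$. Then put $I^+=cl(Hu')\triangleleft S(\B)$ and $I_0^+=I^+\cap WH(\B/\A)$, and invoke Lemma \ref{3.0.1}(8), which says $I_0^+\triangleleft_m WH(\B/\A)$.

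Next, for an arbitrary $I^*\triangleleft_m WH(\B/\A)$, I would apply Lemma \ref{3.0.2} to present $I^*$ as $I_0^+$ for some data $I\triangleleft_m S(\A)$, $u\in J(I)$, $I'\triangleleft_m WH_u(\B)$, $u'\in J(I')$ as above, and then read the five assertions off the preceding lemmas. Part (1) is immediate from $u'\in J(I')\subseteq J(I^*)$. Part (2) is Lemma \ref{3.0.1}(5) together with the identification $\Hh_{v_0}=v_0I_0^+$ of Lemma \ref{3.0.1}(7). For part (5): minimality is Lemma \ref{3.0.1}(8); relative closedness holds because $I^+=cl(Hu')$ is closed in $S(\B)$, so $I^*=I^+\cap WH(\B/\A)$ is relatively closed in $WH(\B/\A)$; the existence of a common kernel $K'$ of all $d_x$, $x\in I^*$, is Lemma \ref{3.0.1}(2); and that $K'$ \emph{determines} $I^*$ is exactly Case~1 in the proof of Lemma \ref{3.0.4}, where $\Ker d_{v_0}=\Ker d_{v_1}$ was shown to force $I_0^+=I_1^+$.

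For part (3) I would argue in two steps. Within a single $I^*$, the groups $u'I^*$, $u'\in J(I^*)$, are mutually isomorphic by Lemma \ref{3.0.1}(7). For two different minimal left ideals $I_0^*,I_1^*$ of $WH(\B/\A)$ and any choices of idempotents $v_i\in J(I_i^*)$, Lemma \ref{3.0.4} supplies a group isomorphism $v_0I_0^*\to v_1I_1^*$. Composing these, all the groups appearing in (3) are isomorphic. For part (4): since $r$ is a $*$-epimorphism onto $S(\A)$ (Corollary \ref{2.11}), $I:=r[I^*]\triangleleft_m S(\A)$ and $u:=r(u')\in J(I)$; as $r$ is a homomorphism, $r|_{u'I^*}$ is a group homomorphism into $uI$, and it is onto since $r[u'I^*]=uI$. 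It remains to identify its kernel as $I^*\cap WH_u(\B)$ and to check that this set is again a minimal left ideal of $WH(\B/\A)$; this is taken from the analysis of \cite[Lemma 3.3, Proposition 3.7]{ne2}, recorded in Lemma \ref{3.0.1}(4),(5), using there that $\Ker d_{u'}=K'$ and the remark (from the proof of Lemma \ref{3.0.1}(4)) that any idempotent of $J(I)$ may play the role of $u$ in generating $I^+$.

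I expect part (4) to require the most care: one must be precise about which object is the kernel of $r|_{u'I^*}$ and, above all, verify that this kernel is a minimal left ideal of the whole semigroup $WH(\B/\A)$ rather than merely of the closed sub-semigroup $WH_u(\B)$. The symmetry-of-idempotents observation in the proof of Lemma \ref{3.0.1}(4) and the cited results of \cite{ne2} are what do the real work there; everything else reduces to routine bookkeeping with the four preceding lemmas and with Corollary \ref{2.11}.
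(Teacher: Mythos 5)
Your proposal is correct and follows essentially the same route as the paper, whose proof simply assembles the preceding lemmas (Lemmas \ref{3.0.1} and \ref{3.0.2} for (1),(2),(4),(5), and Lemma \ref{3.0.4} for (3)). Your additional bookkeeping --- existence via $I_0^+$ and Lemma \ref{3.0.1}(8), reducing an arbitrary $I^*$ to the form $I_0^+$ by Lemma \ref{3.0.2}, and the kernel-determines-the-ideal argument as in Case 1 of Lemma \ref{3.0.4} --- is exactly the intended content.
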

\begin{proof}. (1),(2),(4) and (5) follow from Lemmas \ref{3.0.1} and \ref{3.0.2}. (3) follows from Lemma \ref{3.0.4}.
\end{proof}
We see that the group $\Hh=\Hh_{u'}$ from the beginning of this section is of the form $u'I^*$, where $I^*\triangleleft_m WH(\B/\A)$ and $u'\in J(I^*)$. We call such groups the Ellis subgroups of $WH(\B/\A)$. All of them are isomorphic.

 In this paper we show an algebraic connection between the Ellis groups of $S(\A)$ and the Ellis groups of $S(\B)$ under an assumption dual to the genericity assumption considered in \cite{ne2}. By Fact \ref{1.4} there is a bijection between the Ellis groups of $S(\B)$ and the set $\K\times\R$ (defined as in Fact \ref{1.4}). Every Ellis subgroup $u''I''$ of $S(\B)$ is determined uniquely by the pair $(K(u''I''), R(u''I''))\in\K\times\R$, where $K(u''I'')$ is the same for all Ellis subgroups $u''I''$ of $I''$, and for every $I''\triangleleft_mS(\B)$, all $R\in\R$ appear as $R(u''I'')$ for $u''\in J(I'')$. Hence the genericity assumption that in $S(\B)$ there is exactly one minimal left ideal $I''$ is equivalent to $|\K|=1$.
Here we consider a dual assumption that $|\R|=1$ which means that every (equivalently some) minimal left ideal of $S(\B)$ is a group \footnote{In fact, this assumption is equivalent to distality of every (equivalently: some) minimal left ideal in $S(\B)$, regarded as an $H$-flow. This will be proved in a forthcoming paper.}.
\begin{theorem}\label{3.1} Assume every minimal left ideal in $S(\B)$ is a group. Then the Ellis groups of $S(\A)$ are isomorphic to some closed subgroups of the Ellis groups of $S(\B)$.
\end{theorem}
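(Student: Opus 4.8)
The plan is to exploit the two liftings of an Ellis group $uI$ of $S(\A)$ already constructed: the coheir copy $\G_u = j[uI] \subseteq CH(\B/\A)$, which is isomorphic to $uI$ via $r$, and the weak-heir group $\Hh = \Hh_{u'} = u'I^* \subseteq WH(\B/\A)$, for which $r:\Hh\to uI$ is merely an epimorphism with kernel $I' = I^*\cap WH_u(\B)$. Since $\G_u$ is an \emph{isomorphic} lift of $uI$, it suffices to show that $\G_u$ is contained in (a conjugate of) some Ellis group of $S(\B)$, and that it sits there as a closed subgroup. The hypothesis that every minimal left ideal of $S(\B)$ is a group — equivalently $|\R|=1$ by Fact \ref{1.5} and the discussion before the theorem — is exactly what forces the weak-heir lift and the coheir lift to land in the \emph{same} minimal left ideal of $S(\B)$, because the obstruction to $\Hh$ being an Ellis group was the discrepancy between $R(\Hh)=R'$ and the sections $R\in\R$; when $|\R|=1$ this discrepancy vanishes.

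First I would fix $I\triangleleft_m S(\A)$, $u\in J(I)$, form $\G_u = j[uI]$, and also fix the associated $I^+ = cl(Hs)$ (for $s\in I'$, $I'\triangleleft_m WH_u(\B)$) and $I_0^+ = I^+\cap WH(\B/\A)$ from Proposition \ref{3.0.5}. By Fact \ref{1.1}(4), $I^+$ being a minimal left ideal of $S(\B)$ makes every $I^+ a$, $a\in S(\B)$, minimal; by hypothesis each such ideal \emph{is} a group. The next step is to locate $\G_u$ inside one of these group ideals. For $p\in uI$ let $p^\B = j(p)\in\G_u$; I would show that $p^\B$ is almost periodic in $S(\B)$, i.e. lies in a minimal left ideal. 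This follows because $p$ is almost periodic in $S(\A)$ (Fact \ref{1.1} applied to $S(\A)$), the coheir section $j$ is a $*$-isomorphism onto $CH(\B/\A)$ (Corollary \ref{2.11}), and $cl(Hp^\B)$ maps onto $cl(Gp)=I$ under $r$ (using that $r$ is a $*$-epimorphism and Lemma \ref{2.13}-type reasoning, or directly: $cl(Hp^\B)=S(\B)*p^\B$ and $r[S(\B)*p^\B] = S(\A)*p = I$ since $p^\B$ is a coheir, hence a weak coheir, so Proposition \ref{2.9}(2) applies). A closed $G$-invariant set mapping onto the minimal set $I$ need not itself be minimal, but since $S(\B)$ has its minimal left ideals being \emph{groups}, I would instead argue: pick any $I''\triangleleft_m S(\B)$ with $I''\subseteq cl(Hp^\B)$ and an idempotent $v\in I''$ (it is the identity of the group $I''$); then $v*p^\B\in I''$ and I claim $v*p^\B = p^\B$ up to adjusting the choice of minimal ideal, or more robustly: the element $u^\B = j(u)$ is idempotent in $CH(\B/\A)$ (as $j$ preserves $*$), and I would show $u^\B$ is idempotent in $S(\B)$ and almost periodic, hence $u^\B\in J(I'')$ for some $I''\triangleleft_m S(\B)$; then $\G_u = u^\B * \G_u \subseteq u^\B * I'' = u^\B I''$, which is precisely an Ellis group of $S(\B)$ (here crucially $I''$ is a group so $u^\B I'' = I''$ when $|\R|=1$, matching $R(u^\B I'')$ to the unique section).

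Then I would check the embedding is as subgroups: $\G_u\subseteq u^\B I''$ and both carry the group structure induced by $*$; the inclusion respects $*$ because $\G_u$ is $*$-closed (Corollary \ref{2.11}, $CH(\B/\A)$ is a subsemigroup) and the group operation in $u^\B I''$ is just $*$ (Fact \ref{1.1}(2)). Composing with $r^{-1}:uI\to\G_u$ gives the desired isomorphism of $uI$ onto a subgroup of $u^\B I''$. Finally, for closedness: $\G_u = j[uI]$ and $uI$ is closed in $S(\A)$? In general an Ellis group need not be closed, so instead I would use that $\G_u = CH(\B/\A)\cap u^\B I''$ — the coheir semigroup $CH(\B/\A)$ is closed in $S(\B)$ by Remark \ref{2.7}, and if the ambient Ellis group $u^\B I''$ is identified appropriately, $\G_u$ is its intersection with a closed set, hence closed in $u^\B I''$. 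Concretely: $\G_u = \{q\in u^\B I'' : q\text{ is a coheir over }\A\} = u^\B I''\cap CH(\B/\A)$, and this is relatively closed.

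The main obstacle I anticipate is the step showing $u^\B = j(u)$ is almost periodic in $S(\B)$ and that $\G_u$ actually lands inside a single Ellis \emph{group} of $S(\B)$ (not just an arbitrary subgroup of $S(\B)$) — this is exactly where the hypothesis $|\R|=1$ must be used, and the argument should run: $u^\B$ is an idempotent of the subsemigroup $CH(\B/\A)\cong S(\A)$, and since $u\in uI$ with $I$ minimal in $S(\A)$, $u^\B$ lies in the image under $j$ of a minimal left ideal; one then transfers minimality across $r$ using that $r:WCH(\B/\A)\to S(\A)$ is a $*$-epimorphism whose restriction to $CH(\B/\A)$ is an isomorphism, so $j[I]$ is a minimal left ideal of the semigroup $CH(\B/\A)$ — but to get minimality in all of $S(\B)$ one needs $j[I]\cdot s$ to stay inside $j[I]$ for $s\in S(\B)$, which fails unless the discrepancy measured by $\R$ is trivial. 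I would handle this by instead taking $I'' = cl(Hu^\B)\cap$(minimal subflow) and using $|\R|=1$ to conclude $R(u^\B I'')$ equals the unique section and hence $\G_u$, being the coheir-image which is also a section-respecting copy, coincides with or embeds into $u^\B I''$; verifying this coincidence of sections carefully is the technical heart of the proof.
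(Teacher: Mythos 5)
There is a genuine gap at exactly the step you flag as the ``main obstacle'', and the fix you sketch does not work. Your plan hinges on showing that $u^{\B}=j(u)$ is almost periodic in $S(\B)$, so that $u^{\B}\in J(I'')$ for some $I''\triangleleft_m S(\B)$ and $\G_u=u^{\B}*\G_u\subseteq I''$. But the hypothesis $|\R|=1$ does not make coheir extensions of almost periodic points almost periodic: in the o-minimal example at the end of the paper, $G_1=(M,+)$ has the gip for all $N\succ M$, yet the unique almost periodic extensions of $p_{\pm\infty}$ are weak heirs and \emph{not} weak coheirs, so the coheir $j(p_{\pm\infty})$ is not almost periodic. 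Worse, if $j(u)$ did lie in a minimal left ideal $I''$ which is a group, then being idempotent it would have to equal the identity $u''$ of $I''$, an even stronger false claim. So $\G_u$ simply need not sit inside any minimal left ideal of $S(\B)$, and the argument ``transfer minimality across $r$ using $|\R|=1$'' cannot be completed; closedness of $\G_u$ via $\G_u=u^{\B}I''\cap CH(\B/\A)$ collapses for the same reason.

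The paper's proof uses the hypothesis quite differently, and the two ingredients you are missing are its Lemmas \ref{3.2} and \ref{3.3}. First, Lemma \ref{3.3} shows that if every minimal left ideal of $S(\B)$ is a group then so is every minimal left ideal of $S(\A)$; hence $uI=I$ is closed and $\G=j[I]$ is a closed subgroup of $S(\B)$ (this also supplies the group inverse needed below). Second, instead of trying to place $\G$ inside a minimal ideal, one chooses $I''\triangleleft_m S(\B)$ with $r[I'']=I$ (Lemma \ref{2.13}(2)), takes its unique idempotent $u''$, and uses the retraction $\varphi(x)=x*u''$, which by Lemma \ref{3.2} is a $*$-homomorphism precisely because $I''$ is a group with identity $u''$. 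Then $\G'=\varphi[\G]$ is a closed subgroup of the Ellis group $I''=u''I''$, and injectivity of $\varphi|_{\G}$ follows from the twisted commuting diagram: since $\G\subseteq WCH(\B/\A)$, Proposition \ref{2.9} gives $r(\varphi(x))=r(x)*r(u'')$, so composing with right translation by the inverse of $r(u'')$ in the group $I$ recovers $r$, which is injective on $\G$. Your proposal correctly identifies the two lifts $\G_u$ and $\Hh_{u'}$ and correctly senses that the coheir lift is the one to use, but without the ``multiply on the right by $u''$'' homomorphism and the group structure of $I$ the embedding into an Ellis group of $S(\B)$ is not established.
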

Before the proof we shall prove two lemmas.
\begin{lemma}\label{3.2} Assume $I''\triangleleft_mS(\B)$ is a group and $u''$ is the only idempotent in $I''$. Then the function $\varphi:S(\B)\to I''$ mapping $x$ to $x*u''$ is a $*$-epimorphism and a continuous retraction.
\end{lemma}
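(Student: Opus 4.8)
The plan is to prove Lemma \ref{3.2} directly from the properties of the semigroup operation $*$ on $S(\B)$ recorded in Fact \ref{1.1} and Fact \ref{1.3}. First I would check that $\varphi$ maps into $I''$: since $I''$ is a left ideal and $x*u''$ has $u''\in I''$, we get $x*u''\in I''$. Then I would verify $\varphi$ is the identity on $I''$, i.e.\ that $x*u''=x$ for all $x\in I''$; this is where the hypothesis that $u''$ is the \emph{only} idempotent in $I''$ is used, together with the fact that $I''$ is a group --- in a group with a unique idempotent, that idempotent is the identity element, so right multiplication by $u''$ fixes every element of $I''$. This already gives that $\varphi$ is a retraction onto $I''$. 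To see $\varphi$ is continuous, note that $x\mapsto x*u''=l_x(u'')$ is just evaluation at the fixed point $u''$ of the left-translation map, and since $*$ is left-continuous (continuous in the first coordinate, by Fact \ref{1.3}(3)), $\varphi$ is continuous.

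Next I would show $\varphi$ is a semigroup homomorphism. We must check $\varphi(x*y)=\varphi(x)*\varphi(y)$, i.e.\ $(x*y)*u''=(x*u'')*(y*u'')$. By associativity of $*$ the right side equals $x*(u''*y*u'')$ and the left side equals $x*(y*u'')$, so it suffices to prove $u''*y*u''=y*u''$ for all $y\in S(\B)$, equivalently $u''*z=z$ for all $z\in I''$ (taking $z=y*u''\in I''$). Again, since $I''$ is a group with unique idempotent $u''$, that idempotent acts as a two-sided identity on $I''$, so $u''*z=z$. Hence $\varphi$ is a homomorphism. Surjectivity is immediate since $\varphi$ restricts to the identity on $I''$, so $\varphi$ is a $*$-epimorphism and a continuous retraction, as claimed.

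The only subtle point --- and the one I would be most careful about --- is the justification that a minimal left ideal which happens to be a group has a \emph{unique} idempotent, namely its identity, and that this identity is then a two-sided identity for the ideal. By Fact \ref{1.1}(2), $I''$ is a disjoint union of the groups $u''I''$ over $u''\in J(I'')$; if $I''$ is itself a group, it has a single identity element, which is the unique idempotent, and then $I''=u''I''$ for that idempotent and $u''$ is its neutral element, hence two-sided identity on $I''$. The lemma's hypothesis that $u''$ is ``the only idempotent in $I''$'' is thus automatically packaging this; I would simply record that under it, $u''$ is the neutral element of the group $I''$, so $u''*z=z*u''=z$ for every $z\in I''$, which is exactly what the homomorphism and retraction computations above require. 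No genuine obstacle is expected here; it is all formal manipulation with $*$ once this identity fact is isolated.
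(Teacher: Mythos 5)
Your proposal is correct and follows essentially the same route as the paper: the key step in both is that $u''$, being the identity of the group $I''$, satisfies $u''*(y*u'')=y*u''$ (equivalently $y*u''=u''*y*u''$), from which $(x*y)*u''=(x*u'')*(y*u'')$ follows by associativity. The paper dismisses continuity, surjectivity and the retraction property as easy, while you spell them out (left-continuity of $*$ in the first coordinate, and $\varphi|_{I''}=\mathrm{id}$), which is a harmless elaboration rather than a different argument.
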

\begin{proof}. Let $x,y\in S(\B)$. Since $yu''\in I''$ and $u''$ is the identity element of $I''$, we have that $yu''=u''yu''$. Therefore
$$\varphi(xy)=xyu''=xu''yu''=\varphi(x)\varphi(y)$$
hence $\varphi$ is a $*$-homomorphism. The rest is easy.
\end{proof}
\begin{lemma}\label{3.3} Assume every minimal left ideal in $S(\B)$ is a group. Then every minimal left ideal in $S(\A)$ is  a group.
\end{lemma}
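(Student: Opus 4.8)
The plan is to exploit the description of Ellis groups from Fact~\ref{1.4}(3): for a $d$-closed algebra, combining it with Fact~\ref{1.1} one gets that every minimal left ideal of $S(\A)$ is a group precisely when all the algebras $\Img d_p$, $p$ ranging over the almost periodic points of $S(\A)$, coincide (for fixed $I\triangleleft_m S(\A)$ the map $u\mapsto\Img d_u$ is a bijection from $J(I)$ onto $\{\Img d_p: p\in I'\triangleleft_m S(\A)\}$, so this set is a singleton iff $|J(I)|=1$). By the same token, the hypothesis says that there is a single algebra $R^{*}=\Img d_{q'}$ valid for all almost periodic $q'\in S(\B)$. Put $R^{*}|_G=\{B\cap G: B\in R^{*}\}$; since $B\mapsto B\cap G$ is a Boolean epimorphism $\B\to\B|_G=\A$ commuting with left $G$-translation, $R^{*}|_G$ is a well-defined $G$-subalgebra of $\A$ which does not depend on any choices. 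So it suffices to prove that $\Img d_p=R^{*}|_G$ for every almost periodic $p\in S(\A)$.

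The first ingredient I would establish is that $R^{*}\subseteq\Img d_q$ for \emph{every} $q\in S(\B)$, not only for the almost periodic ones. Indeed $cl(Hq)=S(\B)*q$ is a flow, hence contains a minimal subflow $I''\triangleleft_m S(\B)$; picking $q'\in I''$ we have $q'=s*q$ for some $s\in S(\B)$, so $d_{q'}=d_s\circ d_q$ by Fact~\ref{1.3}(2) and thus $\Img d_{q'}\subseteq\Img d_q$; and since $q'$ is almost periodic, $\Img d_{q'}=R^{*}$.

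Now fix an almost periodic $p\in S(\A)$ and write $R=\Img d_p$. For the inclusion $R\subseteq R^{*}|_G$: by Lemma~\ref{2.13}(2) extend $p$ to an almost periodic $q_1\in S(\B)$, so $\Img d_{q_1}=R^{*}$; by Remark~\ref{1.6}, $R=\Img d_p=\{d_{q_1}A^{\#}\cap G: A\in\A\}$, and each $d_{q_1}A^{\#}$ lies in $\Img d_{q_1}=R^{*}$, whence $R\subseteq R^{*}|_G$. For the opposite inclusion I would use instead the coheir extension $q_2=p^{\B}\in S(\B)$ (Definition~\ref{2.4}): for $B\in\B$ and $g\in G$ the defining property of $p^{\B}$ gives $g^{-1}B\in q_2\iff g^{-1}B\cap G\in p$, and $g^{-1}B\cap G=g^{-1}(B\cap G)$ because $g\in G$; hence $d_{q_2}B\cap G=d_p(B\cap G)\in\Img d_p=R$ for every $B\in\B$. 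Applying the first ingredient with $q=q_2$ gives $R^{*}\subseteq\Img d_{q_2}$, and intersecting with $G$ yields $R^{*}|_G\subseteq\{d_{q_2}B\cap G: B\in\B\}\subseteq R$. Therefore $R=R^{*}|_G$, which is what was needed.

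The only places that call for real care are the ``first ingredient'' $R^{*}\subseteq\Img d_q$ and the computation of $d_{q_2}B\cap G$ for the coheir $q_2$; both reduce to bookkeeping with the identity $d_qA^{\#}\cap G=d_{r(q)}A$ of Remark~\ref{1.6} and with Fact~\ref{1.3}(2), keeping straight the interplay of $\,^{\#}$, restriction $r$ and the endomorphisms $d_q$. Once the reduction to the algebras $\Img d_p$ is in place there is no substantial obstacle, the heavier input (lifting minimal subflows along $r$) being already packaged in Lemma~\ref{2.13}(2).
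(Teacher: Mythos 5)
Your overall strategy --- reducing the claim to $|\R_{\A}|=1$ via Fact \ref{1.4}(3) (so that ``every minimal left ideal is a group'' becomes ``all almost periodic points have the same image algebra'') and then identifying each $\Img d_p$ with $R^{*}|_G$ by playing the weak heir extension against the coheir extension of $p$ --- is sound and genuinely different from the paper's proof, which instead works directly with idempotents of a minimal left ideal of $CH(\B/\A)$ and of $S(\B)$ and derives a contradiction from two distinct idempotents in $J(I)$. Your reduction, the inclusion $R\subseteq R^{*}|_G$ via an almost periodic extension $q_1$ of $p$ together with Remark \ref{1.6}, and the computation $d_{p^{\B}}B\cap G=d_p(B\cap G)$ are all correct.

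There is, however, a genuine error in your justification of the ``first ingredient'' $R^{*}\subseteq\Img d_q$: from $q'=s*q$ and $d_{q'}=d_s\circ d_q$ you conclude $\Img d_{q'}\subseteq\Img d_q$, but $\Img(d_s\circ d_q)=d_s[d_q[\B]]\subseteq d_s[\B]=\Img d_s$, i.e.\ the image of a product is contained in the image of the \emph{first} factor, not the second (this is precisely how the chain $\Img d_{xv_1v_0}\subseteq\Img d_{xv_1}\subseteq\Img d_x$ is used in Lemma \ref{3.0.3}). Since $\Img d_q$ is only a left-invariant subalgebra, there is no reason for $d_s[\Img d_q]$ to land back inside $\Img d_q$, so the step fails as written. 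The claimed containment is nevertheless true, and the repair is to multiply on the other side: fix any $I''\triangleleft_m S(\B)$ and $u''\in J(I'')$; then $q*u''\in I''$ because $I''$ is a left ideal, so $q*u''$ is almost periodic and $\Img d_{q*u''}=R^{*}$ by the hypothesis, while $\Img d_{q*u''}=\Img(d_q\circ d_{u''})\subseteq\Img d_q$. With this one-line correction (applied to $q=q_2=p^{\B}$, which is all you need) the rest of your argument goes through.
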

\begin{proof}. Let $I\triangleleft_m S(\A)$ and by Lemma \ref{2.13}(2) choose $I''\triangleleft_m S(\B)$ with $r[I'']=I$. Let $u''\in J(I'')$ and then let $u\in J(I)$ be such that $r(u'')\in uI$. By Corollary \ref{2.11} $r:CH(\B/\A)\to S(\A)$ is a $*$-isomorphism.
Let $I'$ be a minimal left ideal in $CH(\B/\A)$ with $r[I']=I$ and choose $u'\in J(I')$ with $r(u')=u$.

Suppose for contradiction that $I$ is not a group. So there is $u_0\in J(I)$ distinct from $u$. Choose $u_0'\in J(I')$ with $r(u_0')=u_0$. We have that both $u'u''$ and $u''$ belong to  $I''$ that is a group, hence there is a $p\in I''$ with $u'u''p=u''$. Therefore
$$u'u''=u'(u'u''p)=(u'u')u''p=u'u''p=u''$$
and similarly $u_0'u''=u''$.

By Proposition \ref{2.9}
$$r(u'u'')=r(u')r(u'')=ur(u'')\in uI \mbox{ and } r(u_0'u'')=r(u_0')r(u'')=u_0r(u'')\in u_0I$$
Since $u\neq u_0\in J(I)$, $uI$ and $u_0I$ are disjoint groups and $ur(u'')\neq u_0r(u'')$. Hence $u'u''\neq u_0'u''$, a contradiction.
\end{proof}
\begin{proof}{\em of Theorem \ref{3.1}.} Let $I\triangleleft_m S(\A)$, $u\in J(I)$ and $\G$ be as in the beginning of this section. Due to the assumptions of Theorem \ref{3.1} and by Lemma \ref{3.3} we have that $\G\subseteq CH(\B/\A)$ is a closed subgroup of $S(\B)$ and $r:\G\to I$ is a continuous group isomorphism. By Lemma \ref{2.13}(2) choose $I''\triangleleft_m S(\B)$ with $r[I'']=I$ and pick $u''\in J(I'')$. By Lemma \ref{3.2} the function $\varphi:\G\to I''$ mapping $x$ to $x*u''$ is a continuous group homomorphism. Let $\G'=\varphi[\G]$. So $\G'$ is a closed subgroup of $I''$ and $\varphi:\G\to\G'$ is a group epimorphism. Let $p=r(u'')$ and $p'$ be the group inverse of $p$ in the group $I$. Let $\chi:I\to I$ be the right translation by $p'$. $\G\subseteq WCH(\B/\A)$, so by Proposition \ref{2.9}, for $x\in\G$ we have that
$$r(\varphi(x))=r(xu'')=r(x)r(u'')=r(x)p$$
Therefore the following diagram commutes.
\begin{center}\begin{tikzcd}
\G \arrow{r}{\varphi} \arrow{d}{r} & \G' \arrow{d}{r}\\
I  & I \arrow{l}{\chi}
\end{tikzcd}
\end{center}
Since in this diagram $r:\G\to I$ is a bijection and $r=\chi r\varphi$, also $\varphi$ is a bijection, hence a group isomorphism. Let $f:\G'\to I$ be the composition $\chi r$. So also $f=r\varphi^{-1}$. We see that $f$ is a continuous group isomorphism.
\end{proof}
Notice that the function $r:\G'\to I$ in the proof of Theorem \ref{3.1} is a twisted isomorphism, i.e. $r$ is the composition of isomorphism $f$ with right translation by $p$. The group $\G'$ in the proof of Theorem \ref{3.1} depends on the choice of $I''\triangleleft_mS(\B)$. However it is independent of this choice up to isomorphism.

In the model theoretic set-up Theorem \ref{3.1} translates into the following.
\begin{theorem}\label{3.4} Assume every minimal left ideal in $S_{ext,G}(N)$ is a group. Then the Ellis groups of $S_{ext,G}(M)$ are isomorphic to some closed subgroups of the Ellis groups of $S_{ext,G}(N)$.
\end{theorem}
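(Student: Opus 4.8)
The plan is to reduce Theorem \ref{3.4} directly to Theorem \ref{3.1} via the dictionary established at the end of Section 1. Concretely, the model-theoretic set-up fixes $M\prec^* N$, the definable group $G=G(M)$, $H=G(N)$, and the $d$-closed algebras $\A=\Def_{ext,G}(M)$, $\B=\Def_{ext,G}(N)$, with $A^{\#}=P_A(N)$ for $A\in\A$. As explained in the excerpt (citing \cite[Lemma 1.2]{ne2}), these data verify all the hypotheses of the combinatorial set-up: $G\prec H$, $\A\subseteq\Def(G)$ is $d$-closed, $\B$ is $d$-closed, $\B$ contains $A^{\#}$ for every $A\in\A$, and $\B|_G=\A$. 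Under these identifications $S(\A)=S_{ext,G}(M)$ and $S(\B)=S_{ext,G}(N)$ as $G$-flows and (by Fact \ref{1.3}) as left-continuous semigroups $(S(\A),*)$ and $(S(\B),*)$.

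First I would observe that the hypothesis of Theorem \ref{3.4}, namely that every minimal left ideal in $S_{ext,G}(N)$ is a group, is literally the hypothesis of Theorem \ref{3.1} once we read $S_{ext,G}(N)$ as $S(\B)$. Here it matters that the semigroup operation $*$ on $S_{ext,G}(N)$ coming from its Ellis semigroup structure is exactly the operation $*$ on $S(\B)$ from Fact \ref{1.3}(3), so ``minimal left ideal'' and ``group'' mean the same thing on both sides. Next, the conclusion of Theorem \ref{3.1} asserts that the Ellis groups of $S(\A)$ embed as closed subgroups of the Ellis groups of $S(\B)$; translating back, the Ellis groups of $S_{ext,G}(M)$ embed as closed subgroups of the Ellis groups of $S_{ext,G}(N)$, which is precisely the statement of Theorem \ref{3.4}. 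So the proof is essentially one sentence: apply Theorem \ref{3.1} to $\A=\Def_{ext,G}(M)$ and $\B=\Def_{ext,G}(N)$.

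The only point requiring a word of care — and the mild ``obstacle'' — is making sure the two notions of Ellis group coincide under the dictionary. By definition, the Ellis groups of the flow $S(\A)$ are the groups $uI$ with $I\triangleleft_m S(\A)$ and $u\in J(I)$, computed in the semigroup $(S(\A),*)$ (Fact \ref{1.1} and Fact \ref{1.3}). Since $(S(\A),*)\cong E(S(\A))$ as semigroups via $p\mapsto l_p$, these are exactly the Ellis subgroups of the Ellis semigroup $E(S_{ext,G}(M))$, i.e. the Ellis groups of the flow $S_{ext,G}(M)$ in the sense of the Introduction; likewise for $N$. The isomorphism $r\varphi^{-1}:\G'\to I$ produced in the proof of Theorem \ref{3.1}, with $\G'$ a closed subgroup of a minimal left ideal $I''\triangleleft_m S(\B)$, therefore exhibits each Ellis group of $S_{ext,G}(M)$ as isomorphic to a closed subgroup of the corresponding Ellis group of $S_{ext,G}(N)$, and by Fact \ref{1.1}(3) all these Ellis groups are mutually isomorphic, so the statement does not depend on the choices of $I$, $u$, $I''$, $u''$.

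\begin{proof}{\em of Theorem \ref{3.4}.} Put $\A=\Def_{ext,G}(M)$ and $\B=\Def_{ext,G}(N)$, and for $A\in\A$ let $A^{\#}=P_A(N)$. As recalled at the end of Section 1 (using \cite[Lemma 1.2]{ne2}), the tuple $(G,H,\A,\B)$ with $H=G(N)$ satisfies all the assumptions of the combinatorial set-up, and under the resulting identifications $S(\A)=S_{ext,G}(M)$, $S(\B)=S_{ext,G}(N)$ as $G$-flows, while the semigroup operation $*$ furnished by Fact \ref{1.3}(2) on each of $S(\A)$ and $S(\B)$ agrees, via the isomorphism $p\mapsto l_p$ of Fact \ref{1.3}(3), with the composition operation on the respective Ellis semigroup. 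In particular the minimal left ideals and the Ellis subgroups of $(S(\B),*)$ coincide with the minimal left ideals and the Ellis groups of the flow $S_{ext,G}(N)$, and similarly for $M$. Hence the hypothesis that every minimal left ideal in $S_{ext,G}(N)$ is a group is exactly the hypothesis of Theorem \ref{3.1} for this $\A$ and $\B$. Applying Theorem \ref{3.1}, the Ellis groups of $S(\A)$ are isomorphic to closed subgroups of the Ellis groups of $S(\B)$; translating back through the identifications above, the Ellis groups of $S_{ext,G}(M)$ are isomorphic to closed subgroups of the Ellis groups of $S_{ext,G}(N)$, as required.
\end{proof}
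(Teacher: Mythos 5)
Your proposal is correct and is exactly the paper's route: the paper obtains Theorem \ref{3.4} as the direct translation of Theorem \ref{3.1} into the model theoretic set-up, using the reduction from Section 1 (via \cite[Lemma 1.2]{ne2}) with $\A=\Def_{ext,G}(M)$ and $\B=\Def_{ext,G}(N)$. Your additional remark verifying that the semigroup $*$ and hence the Ellis groups match under the identification of Fact \ref{1.3}(3) is a harmless and correct elaboration of what the paper leaves implicit.
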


Under the dual genericity assumption we have the following theorem.
\begin{theorem}\label{3.5} Assume $S(\B)$ has a unique minimal left ideal. Then the Ellis groups of $S(\A)$ are homomorphic images of subgroups of the Ellis groups of $S(\B)$.
\end{theorem}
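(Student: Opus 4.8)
The plan is to follow the scheme of the proof of Theorem~\ref{3.1}, but with the coheir group $\G$ replaced by the weak--heir group $\Hh=\Hh_{u'}$ constructed in Section~3. Recall that $\Hh$ is an Ellis subgroup $u'I^{*}$ of $WH(\B/\A)$, where $I^{*}\triangleleft_m WH(\B/\A)$, and that by Proposition~\ref{3.0.5}(4) the restriction $r:\Hh\to uI$ is a group epimorphism onto the Ellis group $uI$ of $S(\A)$, with $I=r[I^{*}]\triangleleft_m S(\A)$ and $u=r(u')\in J(I)$. Since all Ellis groups of $S(\A)$ are mutually isomorphic, and likewise for $S(\B)$ (Fact~\ref{1.1}(3), Proposition~\ref{3.0.5}(3)), it suffices to realise this one $\Hh$ as a subgroup of some Ellis group of $S(\B)$: composing that inclusion with $r$ then exhibits $uI$ as a homomorphic image of a subgroup of an Ellis group of $S(\B)$.

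First one checks that $\Hh$ is already a subgroup of the semigroup $(S(\B),*)$: it is closed under $*$ because $WH(\B/\A)$ is a subsemigroup of $S(\B)$ (Corollary~\ref{2.11}), its identity $u'$ is an idempotent of $S(\B)$, and it contains the $*$--inverse of each of its elements. Writing $I^{*}=I^{+}\cap WH(\B/\A)$ with $I^{+}=cl(Hu')\triangleleft S(\B)$ as in the construction preceding Theorem~\ref{3.1}, we have $\Hh\subseteq I^{+}$; hence it is enough to show that $I^{+}$ is a \emph{minimal} left ideal of $S(\B)$, for then $u'\in J(I^{+})$ and $\Hh\subseteq u'I^{+}$, an Ellis subgroup of $S(\B)$. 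Here the hypothesis enters. As $S(\B)$ has a unique minimal left ideal $I''$, this $I''$ is contained in every left ideal of $S(\B)$, and $I''q_0=I''$ for every $q_0\in S(\B)$ by Fact~\ref{1.1}(4) and uniqueness. Now $I^{+}=cl(Hs)=S(\B)*s$ for any $s$ in the minimal left ideal $I'\triangleleft_m WH_u(\B)$ used to build $\Hh$ (Lemma~\ref{3.0.1}(2)), so it is enough to be able to choose $I'$ with $I'\subseteq I''$: then $I^{+}=S(\B)*s\subseteq I''$ for $s\in I'\subseteq I''$, and the nonempty left ideal $I^{+}$ of $S(\B)$, being contained in the minimal left ideal $I''$, must equal $I''$. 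Since $WH_u(\B)$ is a \emph{closed} subsemigroup of $S(\B)$ (by Lemma~\ref{2.8} and Corollary~\ref{2.11}), such an $I'$ exists as soon as $WH_u(\B)\cap I''\neq\0$, for then $WH_u(\B)\cap I''$ is a nonempty left ideal of $WH_u(\B)$ containing one.

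The main obstacle is therefore the single claim that $WH_u(\B)$ meets $I''$, i.e.\ that some almost periodic point of $S(\B)$ is a weak heir of $u$. For a candidate I would take $v''*q_0$, where $q_0\in WH_u(\B)$ is an arbitrary weak heir of $u$ (Lemma~\ref{2.6}) and $v''\in J(I'')$ is an idempotent chosen so that $\{A^{\#}:A\in\Img d_u\}\subseteq\Img d_{v''}$. Then for every $A\in\A$, using $d_{q_0}A^{\#}=(d_uA)^{\#}$ (as $q_0\in WH_u(\B)$), that $(d_uA)^{\#}\in\Img d_{v''}$, and the idempotency of $d_{v''}$, we get $d_{v''*q_0}A^{\#}=d_{v''}(d_{q_0}A^{\#})=d_{v''}((d_uA)^{\#})=(d_uA)^{\#}$, so $v''*q_0$ is a weak heir of $u$ by Definition~\ref{2.1}(1); moreover $v''*q_0\in I''q_0=I''$. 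What remains is to justify the choice of $v''$: by Fact~\ref{1.4}(3), applied to $S(\B)$, for the minimal left ideal $I''$ and any prescribed maximal generic $H$--subalgebra $R$ of $\B$ (these are exactly the members of $\R$, by Fact~\ref{1.5}) there is a unique $v''\in J(I'')$ with $\Img d_{v''}=R$; so it suffices to know that $\{A^{\#}:A\in\Img d_u\}$ is contained in some maximal generic $H$--subalgebra of $\B$, equivalently that the $H$--subalgebra of $\B$ it generates is generic. Proving this last point — where the genericity of $S(\B)$ (its having a unique minimal left ideal) is genuinely needed, via $G\prec H$ and Fact~\ref{1.5} — I expect to be the hardest step; granting it, the bookkeeping of Proposition~\ref{3.0.5} and Fact~\ref{1.1} completes the proof.
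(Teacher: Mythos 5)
Your reduction chain is clean up to a point: it is indeed enough to realise $\Hh$ inside an Ellis group of $S(\B)$, this would follow if $I^{+}$ were a minimal left ideal of $S(\B)$, and that in turn reduces (given the unique minimal left ideal $I''$) to the single claim that $WH_u(\B)\cap I''\neq\0$, i.e.\ that some almost periodic point of $S(\B)$ is a weak heir of $u$. But that claim is false in general, so the approach cannot be completed. The paper's own closing example is a counterexample: for $G_2$ definably compact in an o-minimal structure, $S_{ext,G_2}(N)$ has external generic types, hence a unique minimal left ideal (so the hypothesis of the theorem holds), yet the almost periodic types of $S_{ext,G_2}(N)$ are exactly the coheirs of almost periodic types over $M$ and are \emph{not} weak heirs over $M$. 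So $WH(\B/\A)\cap I''=\0$, $I^{+}$ is never minimal in $S(\B)$, and $\Hh$ does not embed in any Ellis group of $S(\B)$. Concretely, the step you flagged as hardest -- that $\{A^{\#}:A\in\Img d_u\}$ generates a generic $H$-subalgebra of $\B$ -- is exactly what fails: for a generic interval-type set $A$ and an infinitesimal translate $hA^{\#}$, the symmetric difference $A^{\#}\triangle hA^{\#}$ is nonempty but not generic in $H$. (The rest of your construction of $v''*q_0$, including $d_{v''*q_0}A^{\#}=d_{v''}\bigl((d_uA)^{\#}\bigr)$ and $I''q_0=I''$, is correct; it is only the existence of the required $v''$ that breaks down.)

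The paper circumvents this by not trying to place $\Hh$ inside an Ellis group of $S(\B)$. Instead it uses Lemma \ref{3.6}: since $I''$ is the unique minimal left ideal, $\psi(x)=u''*x$ is a $*$-homomorphism of $S(\B)$ onto the Ellis group $u''I''$. One sets $\Hh'=\psi[\Hh]$, a genuine subgroup of the Ellis group $u''I''$ of $S(\B)$, and then uses $\Hh\subseteq WH(\B/\A)$ together with Proposition \ref{2.9} to get $r(\psi(x))=r(u'')*r(x)$, so that after composing with the left translation $\chi$ by the inverse of $r(u'')$ in $uI$ one obtains a group epimorphism $f=\chi\circ r:\Hh'\to uI$. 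You would need to replace your "embed $\Hh$" step with this "project $\Hh$ by $\psi$ and correct by a translation" step; as it stands, your argument proves the theorem only in the special cases where weak heirs can be taken almost periodic, not in general.
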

This theorem has been proved in \cite[Theorem 4.1]{ne2} in the model theoretic set-up and the proof translates directly into the combinatorial set-up. However, using the group $\Hh$ described in the beginning of this section we shall give an easier proof, in a way dual to the proof of Theorem \ref{3.1}. The only properties of the group $\Hh$ we shall use will be that $\Hh\subseteq WH(\B/\A)$ and $r:\Hh\to uI$ is a group epimorphism. We shall need the following lemma.
\begin{lemma}\label{3.6} Assume in $S(\B)$ there is a unique minimal left ideal $I''$ and let $u''\in J(I'')$. Then the function $\psi:S(\B)\to S(\B)$ mapping $x$ to $u''*x$ is a $*$-homomorphism with the image $u''I''$.
\end{lemma}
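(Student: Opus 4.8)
The plan is to first use the uniqueness of the minimal left ideal to see that $I''$ is a two-sided ideal of $S(\B)$, then observe that $\psi$ actually takes values in the \emph{group} $u''I''$, and finally derive the homomorphism property from the fact that $u''$ is a two-sided identity of that group.

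For the first step, given any $a\in S(\B)$, Fact \ref{1.1}(4) tells us that $I''a\triangleleft_m S(\B)$, so by uniqueness $I''a=I''$; thus $I''$ is also a right ideal, i.e.\ a two-sided ideal. It follows that $u''*x\in I''$ for every $x\in S(\B)$, and since $u''$ is idempotent, $u''*x=u''*(u''*x)\in u''I''$. This gives $\Img\psi\subseteq u''I''$, and the reverse inclusion is trivial, since for $w\in u''I''$ we may write $w=u''*w'$ with $w'\in I''$, which exhibits $w=\psi(w')$. Hence $\Img\psi=u''I''$.

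For the homomorphism property, Fact \ref{1.1}(2) says $u''I''$ is a group with identity $u''$, so $w*u''=w$ for every $w\in u''I''$; applying this to $w=u''*x$ yields $u''*x*u''=u''*x$. Using associativity of $*$ we then compute, for $x,y\in S(\B)$,
$$\psi(x*y)=u''*(x*y)=(u''*x)*y=(u''*x*u'')*y=(u''*x)*(u''*y)=\psi(x)*\psi(y).$$

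All the computations are short; the genuine point — and the one place where care is needed — is the ordering of the argument: one must establish that $I''$ is two-sided before anything else, because only then does $u''*x$ land inside the subgroup $u''I''$ on which the relation $u''*x*u''=u''*x$ holds. In a general compact left-continuous semigroup, an idempotent sitting in a non-group minimal left ideal need not satisfy $w*u''=w$ for all $w$ in that ideal, so the homomorphism computation would break down. (Note that, unlike in Lemma \ref{3.2}, continuity of $\psi$ is neither asserted nor needed here, since $*$ is only continuous in the first coordinate.)
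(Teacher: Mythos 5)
Your proof is correct and follows essentially the same route as the paper's: both use Fact \ref{1.1}(4) together with uniqueness of the minimal left ideal to place $u''*x$ in $u''I''$, and then the identity $u''*x*u''=u''*x$ (from $u''$ being the identity of the group $u''I''$) to get the homomorphism property. The only cosmetic difference is that you phrase the first step as ``$I''$ is two-sided'' while the paper phrases it as ``$u''x$ is almost periodic''; the content is identical.
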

\begin{proof}. Let $x,y\in S(\B)$. By Fact \ref{1.1}(4), $u''x$ is almost periodic, hence it belongs to the only minimal left ideal $I''$ in $S(\B)$. Also
$u''x=u''(u''x)\in u''I$.
If $x\in u''I''$, then $u''x=x$, hence the group $u''I''$ is the image of $\psi$.

Since $u''I''$ is a group and $u''x\in u''I''$, we get that $u''x=u''xu''$. Therefore
$$\psi(xy)=u''xy=u''xu''y=\psi(x)\psi(y)$$
and $\psi$ is a homomorphism.
\end{proof}
\begin{proof}{\em of Theorem \ref{3.5}.} Let $I\triangleleft_mS(\A),\ u\in J(I)$ and $\Hh\subseteq WH(\B/\A)$ be as in the beginning of this section. So $\Hh$ is a group and $r:\Hh\to uI$ is a group epimorphism. Let $I''\triangleleft_m S(\B)$ and $u''\in J(I'')$. Since $I''$ is the only minimal left ideal in $S(\B)$, by Lemma \ref{2.13}(2) we have that $r[I'']=I$. Replacing $u$ by another element of $J(I)$ if necessary (and modifying $\Hh$ accordingly), we may assume that $r(u'')\in uI$. Let $\psi:\Hh\to u''I''$ be as in Lemma \ref{3.6} and let $\Hh'=\psi[\Hh]$. So $\psi:\Hh\to\Hh'$ is a group epimorphism. Let $p=r(u'')$ and let $p'$ be the group inverse of $p$ in $uI$. Let $\chi:uI\to uI$ be the left translation by $p'$. $\Hh\subseteq WH(\B/\A)$, so by Proposition \ref{2.9} we have that
$$r(\psi(x))=r(u''x)=r(u'')r(x)=pr(x)$$
Therefore the following diagram commutes.
\begin{center}\begin{tikzcd}
\Hh \arrow{r}{\psi} \arrow{d}{r} & \Hh' \arrow{d}{r}\\
uI & uI \arrow{l}{\chi}\end{tikzcd}
\end{center}
Since in this diagram $r:\Hh\to uI$ is ``onto", $r=\chi r\psi$ and $\chi$ is a bijection, we get that $r:\Hh'\to uI$ is ``onto" and $f:\Hh'\to uI$ defined as $\chi r$ is a group epimorphism.
\end{proof}
Notice that the function $r:\Hh'\to uI$ is a twisted group epimorphism, namely it is the epimorphism $f$ composed with left translation by $p$, similarly as in the proof of Theorem \ref{3.1} and also as in \cite{ne3}, where under another assumption it is proved that the Ellis groups in $S_{ext,G}(M)$ and $S_{ext,G}(N)$ are isomorphic and in the proof restrictions are twisted isomorphisms. However in the model theoretic set-up no example is known where the twist is really needed. So, given $M\prec^* N,\ I\triangleleft_mS_{ext,G}(M)$ and $I''\triangleleft_m S_{ext,G}(N)$ with $r[I'']=I$, is it true that there are $u\in J(I)$ and $u''\in J(I'')$ with $u=r(u'')$?

Also, the group $\Hh'$ in the proof of Theorem \ref{3.5} is independent on the choice of
$u''$, up to isomorphism.

We conclude this section considering some special cases.

\begin{remark}\label{3.7}
Assume $S(\B)$ contains a unique minimal left ideal and $CH(\B/\A)\subseteq WH(\B/\A)$.
Let $f:\Hh'\to uI$ be the group epimorphism from the proof of Theorem \ref{3.5}. Then $f$ splits, that is there is a subgroup $\Hh''$ of $\Hh'$ such that $f|_{\Hh''}:\Hh''\to uI$ is an isomorphism and $\Hh'=\Ker f\rtimes \Hh''$.
\end{remark}
\begin{proof}. We consider the situation from the proof of Theorem \ref{3.5}. By Lemma \ref{3.6} we have a $*$-homomorphism $\psi:S(\B)\to u''I''$ (mapping $x$ to $u''x$) and $\Hh'=\psi[\Hh]$.

Now we have additionally a group $\G=\{p^{\B}:p\in uI\}\subseteq CH(\B/\A)\subseteq WH(\B/\A)$ such that $r:\G\to uI$ is an isomorphism. Let $\Hh''=\psi[\G]$. So $\Hh''$ is a subgroup of $u''I''$ and as in the proof of Theorem \ref{3.1} we get that $r:\Hh''\to uI$ is a bijection. We claim that $\Hh''\subseteq\Hh'$.

Indeed, let $x\in\G$. We have that $u''$ is the identity element of the group $\Hh'$ and let $u'$ be the identity element of the group $\Hh$ (as in the proof of Theorem \ref{3.5}).  Since $\psi(u')=u''$, we have that $\psi(x)=\psi(u'xu')$. By Lemma \ref{3.0.1}, $\Hh_{u'}=u'I_0^+$, where $I_0^+=cl(Hu')\cap WH(\B/\A)\triangleleft_m WH(\B/\A)$. Hence $u'xu'\in\Hh_{u'}$ and $\psi(x)\in\Hh'$.

Therefore $f|_{\Hh''}$ is an isomorphism $\Hh''\to uI$ and we are done.
\end{proof}

\begin{proposition}\label{3.8} Assume the semigroup $S(\B)$ is commutative \footnote{In the next section we shall see model theoretic examples where this is the case.}.\\
(1) The assumptions of both Theorems \ref{3.1} and \ref{3.5} are satisfied.\\
(2) The semigroup $S(\A)$ is also commutative.\\
(3) $WCH(\B/\A)=WH(\B/\A)$\\
In particular, $CH(\B/\A)\subseteq WH(\B/\A)$ and the assumptions of Remark \ref{3.7} hold.
Also the group $\G'$ from the proof of Theorem \ref{3.1} is contained in the group $\Hh'$ from the proof of Theorem \ref{3.5} and $\Hh'=\Ker f\rtimes \G'$, where $f:\Hh'\to uI$ is the group epimorphism from the proof of Theorem \ref{3.5}.
\end{proposition}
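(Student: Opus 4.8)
Looking at this, I need to prove Proposition 3.8, which has three parts. Let me think about what each part requires.The plan is to prove the three parts in order, using commutativity of $S(\B)$ as the single hypothesis, and then assemble the concluding remarks from earlier machinery.

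For part (1), the key observation is that if $S(\B)$ is commutative, then every minimal left ideal $I''\triangleleft_m S(\B)$ is in fact a minimal two-sided ideal, hence unique (the intersection of two minimal left ideals would be a nonempty left ideal contained in both). That gives the genericity assumption of Theorem \ref{3.5}. For the assumption of Theorem \ref{3.1}, I would argue that in a commutative left-continuous compact semigroup, a minimal left ideal $I''$ is a group: by Fact \ref{1.1}(2), $I''$ is a disjoint union of groups $u''I''$ for $u''\in J(I'')$; but for two idempotents $u_1,u_2\in J(I'')$, commutativity gives $u_1 u_2 = u_2 u_1$, and this element is an idempotent in $I''$ lying in both $u_1 I''$ and $u_2 I''$, forcing $u_1 I'' = u_2 I''$; moreover $u_1 u_2$ is then the identity of that single group, so $u_1 u_2 = u_1$ and $u_1 u_2 = u_2$, i.e. $u_1 = u_2$. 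So $|J(I'')|=1$ and $I''=u''I''$ is a group. Thus both theorems' hypotheses hold.

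For part (2), since $r:S(\B)\to S(\A)$ is a continuous $*$-epimorphism by Corollary \ref{2.11} (restricted from $CH(\B/\A)$, or more simply: $r$ restricted to $CH(\B/\A)$ is a $*$-isomorphism onto $S(\A)$), commutativity of $S(\B)$ transfers to $S(\A)$ — a homomorphic image of a commutative semigroup is commutative; concretely, $CH(\B/\A)$ is a subsemigroup of $S(\B)$ hence commutative, and $r|_{CH(\B/\A)}$ is an isomorphism onto $S(\A)$. For part (3), I would use the characterizations in Proposition \ref{2.9}: $q$ is a weak coheir over $\A$ iff $r(q*s)=r(q)*r(s)$ for all $s\in S(\B)$, and $q$ is a weak heir over $\A$ iff $r(s*q)=r(s)*r(q)$ for all $s$. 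When $S(\B)$ is commutative, $q*s=s*q$ and, since $S(\A)$ is commutative by part (2), $r(q)*r(s)=r(s)*r(q)$; so the two conditions coincide, giving $WCH(\B/\A)=WH(\B/\A)$.

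For the concluding claims: since $CH(\B/\A)\subseteq WCH(\B/\A)=WH(\B/\A)$, the hypothesis $CH(\B/\A)\subseteq WH(\B/\A)$ of Remark \ref{3.7} holds, and by part (1) $S(\B)$ has a unique minimal left ideal, so Remark \ref{3.7} applies and $f:\Hh'\to uI$ splits with $\Hh'=\Ker f\rtimes\Hh''$, where $\Hh''=\psi[\G]$. It remains to identify $\Hh''$ with the group $\G'$ from the proof of Theorem \ref{3.1}. In that proof $\G'=\varphi[\G]$ where $\varphi(x)=x*u''$; here $\Hh''=\psi[\G]$ where $\psi(x)=u''*x$; by commutativity $x*u''=u''*x$, so $\varphi=\psi$ on $\G$ and $\G'=\Hh''$. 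Thus $\G'\subseteq\Hh'$ and $\Hh'=\Ker f\rtimes\G'$.

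The main obstacle I expect is the bookkeeping in the final paragraph: one must check carefully that the idempotent $u''\in J(I'')$ and the element $u\in J(I)$ chosen in the two theorem proofs can be taken compatibly (i.e. $r(u'')\in uI$), which both proofs already arrange by replacing $u$ if necessary; and one must verify $\Img d$-image conditions implicit in the definition of $\Hh$ are compatible with $\G$ landing inside $\Hh'$ after applying $\psi$ — but this is exactly the computation carried out in the proof of Remark \ref{3.7}, using $\Hh_{u'}=u'I_0^+$ from Lemma \ref{3.0.1}(7), so it transfers verbatim. Everything else is routine verification once commutativity is exploited.
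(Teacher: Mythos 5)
Your proposal is correct and follows essentially the same route as the paper: uniqueness and group structure of the minimal left ideal from commutativity for (1), transfer of commutativity via the isomorphism $S(\A)\cong CH(\B/\A)$ for (2), Proposition \ref{2.9} for (3), and Remark \ref{3.7} together with $\varphi=\psi$ (you via global commutativity, the paper via $I''$ being a group) for the final clause. The only blemish is the opening phrase of part (2) calling $r:S(\B)\to S(\A)$ a $*$-epimorphism, which is false on all of $S(\B)$, but you immediately replace it with the correct argument through $CH(\B/\A)$, so nothing is lost.
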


\begin{proof}. (1) Assume $I_0,I_1\triangleleft_mS(\B)$ and $u_0,u_1\in J(I_0)$. Then $u_0=u_0u_1=u_1u_0=u_1$, hence $I_0$ is a group. Also $I_0=I_1I_0=I_0I_1=I_1$, hence $S(\B)$ contains a unique minimal left ideal.

(2) By Corollary \ref{2.11} $S(\A)\cong CH(\B/\A)\subseteq S(\B)$.

(3) Assume $q,s\in S(\B)$. Using commutativity of $S(\B)$ and $S(\A)$ we see that
$$r(q*s)=r(q)*r(s)\iff r(s*q)=r(s)*r(q)$$
hence we are done by Proposition \ref{2.9}.

For the last clause we work in the situations of the proofs of Theorems \ref{3.1} and \ref{3.5}. So we have a unique $I''\triangleleft_m S(\B)$ that is a group with identity element $u''\in J(I'')$. By Lemmas \ref{3.4} and \ref{3.6} we have $*$-homomorphisms $\varphi,\psi:S(\B)\to I''$ mapping $x$ to $xu''$ and $u''x$ respectively. Since $I''$ is a group we have that
$$xu''=u''xu''=u''x$$
hence $\varphi=\psi$.
Now $\G'=\varphi[\G]=\psi[\G]$ equals $\Hh''$ from the proof of Remark \ref{3.7}, hence it is contained in $\Hh'$. The rest follows from Remark \ref{3.7}.
\end{proof}

\section{The model theoretic set-up}
In this section we elaborate on the results from Sections 2 and 3 in the model theoretic set-up. So again we assume $M$ is an $L$-structure, $T=Th(M)$, $G=G(M)$ is an infinite $0$-definable group in $M$, $M\prec^*N$ and $H=G(N)$. We are interested in the $G$-flow $S_{ext,G}(M)$ and the $H$-flow $S_{ext,G}(N)$. This situation fits into the combinatorial set-up, with $\A=\Def_{ext,G}(M)$ and $\B=\Def_{ext,G}(N)$. For $A\in \Def_{ext,G}(M)$ we define $A ^{\#}\in \Def_{ext,G}(N)$ as in Section 1. We apply the definitions of weak heir and weak coheir here. When $q\in S_{ext,G}(N)$ is a weak [co]heir of $p\in S_{ext,G}(M)$, we say that $q$ is a weak [co]heir over $M$. Let
$$CH_G(N/M)=\{q\in S_{ext,G}(N): q\mbox{ is a coheir over }M\}$$
and let $WCH_G(N/M)$ and $WH_G(N/M)$ be the corresponding sets of weak coheirs and weak heirs, respectively.

Assume $G_1$ is a $0$-definable subgroup of $G$ and let $H_1=G_1(N)$. Then we have also a $G_1$-flow $S_{ext,G_1}(M)$ and $H_1$-flow $S_{ext,G_1}(N)$, they are naturally identified with the clopen sets $S_{ext,G}(M)\cap[G_1]$ and $S_{ext,G}(N)\cap [H_1]$, respectively. So $S_{ext,G_1}(M)$ and $S_{ext,G_1}(N)$  are semigroups isomorphic to $\End(\Def_{ext,G_1}(M))$ and $\End(\Def_{ext,G_1}(N))$, and as such also closed sub-semigroups of $S_{ext,G}(M)$ and $S_{ext,G}(N)$, respectively.
\begin{lemma}\label{4.1} Assume $[G:G_1]$ is finite and $p\in S_{ext,G_1}(N)$.\\
(1) $p\in WH_{G_1}(N/M)$ iff $p\in WH_G(N/M)$.\\
(2) $p\in WCH_{G_1}(N/M)$ iff $p\in WCH_G(N/M)$.
\end{lemma}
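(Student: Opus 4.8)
The plan is to reduce the statement about $G$-weak-[co]heirs to the statement about $G_1$-weak-[co]heirs by exploiting the finite-index decomposition $G=\bigsqcup_{i<n} g_iG_1$ (with $g_0=e$) and the parallel decomposition $H=\bigsqcup_{i<n} g_iH_1$, which holds because $M\prec^*N$ preserves the (finitely many) cosets. The key observation is that every $A\in\Def_{ext,G}(M)$ decomposes as $A=\bigsqcup_{i<n}(A\cap g_iG_1)$ and each piece $A\cap g_iG_1$ is a left translate $g_i(g_i^{-1}A\cap G_1)$ of a set in $\Def_{ext,G_1}(M)$; a similar decomposition holds for $A^{\#}$ in $\B$ and for arbitrary $B\in\B$. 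So the $G$-algebra $\Def_{ext,G}(M)$ is, as a Boolean algebra with the relevant $d$-operators, controlled by $\Def_{ext,G_1}(M)$ together with the finite group action of coset representatives, and likewise on the $H$-side.

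Concretely, for (1) I would use Lemma \ref{2.3}(1): $p$ is a weak heir iff for all $A,B$ in the relevant algebra and all $h$ in the relevant group, $h\in B^{\#}$ and $h^{-1}A^{\#}\in p$ implies there is $g$ in the smaller group with $g\in B$, $g^{-1}A\in p$. Given $p\in S_{ext,G_1}(N)$, so $p$ concentrates on $H_1$, I would show the $G$-version of this condition reduces to the $G_1$-version: write $h=g_i h_1$ with $h_1\in H_1$, write $A=\bigsqcup_j A_j$, $B=\bigsqcup_j B_j$ along the cosets, note $h^{-1}A^{\#}\in p$ picks out a single coset piece (since $p$ lives on $H_1$, only the $i'$ with $g_{i'}H_1 = g_i H_1 \cdot (\text{something})$ matters — more precisely $h^{-1}A^\#\cap H_1 = h_1^{-1}(g_i^{-1}A^\#\cap g_i^{-1}g_{i'}H_1)$ for the appropriate $i'$), and reassemble to get a genuine instance of the $G_1$-weak-heir condition with $h_1\in H_1$, whose witness $g_1\in G_1$ then yields $g=g_i g_1\in G$ (or directly $g_1$, depending on bookkeeping). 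The converse direction, $WH_G\Rightarrow WH_{G_1}$, is the easy inclusion: a $G_1$-instance is already a $G$-instance once one checks that the witnessing formulas $\varphi_{A,B}$ with $A,B\subseteq G_1$ are among those tested for $G$, using that $\Def_{ext,G_1}(M)=\Def_{ext,G}(M)|_{G_1}$ sits inside $\Def_{ext,G}(M)$ via $A\mapsto A$ (as a subset of $G$).

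For (2) I would argue analogously using the coheir characterization in Lemma \ref{2.3}(2): $p$ (with $p=r(q)$ already on the $G_1$-side) is a weak coheir iff for all $A,B$ and all $s$ in the ambient space, $d_sA^{\#}\cap B^{\#}\in p$ implies $d_sA^{\#}\cap B^{\#}\cap G\neq\emptyset$. Here I would use Remark \ref{1.6} to rewrite $d_sA^{\#}\cap G = d_{r(s)}A$, decompose along cosets, and observe that membership in $p$ again isolates the $H_1$-part. A mild subtlety is that $s$ ranges over $S_{ext,G}(N)$ in the $G$-version but over $S_{ext,G_1}(N)$ in the $G_1$-version; I would handle this by restricting $s$ to $H_1$ (i.e.\ replacing $s$ by $\hat{g_i}*s$ restricted appropriately, or directly by noting $d_s A^\#\cap H_1$ depends only on $s$ restricted to the coset algebra) — this is where the finite index is essential, since it guarantees the restriction map $S_{ext,G}(N)\to S_{ext,G_1}(N)$ behaves well on the finitely many relevant clopen pieces.

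The main obstacle I anticipate is the bookkeeping in the weak-heir direction: correctly tracking which coset $h^{-1}A^{\#}$ lands in relative to $H_1$, and making sure the witness produced for the $G_1$-instance (with parameter $h_1\in H_1$) genuinely translates back to a witness for the original $G$-instance with parameter $h=g_ih_1$, rather than only for a translated version. A clean way to finesse this is to first reduce to the case $B\subseteq G_1$ (replace $B$ by $B\cap g_iG_1$ and absorb $g_i$ into $A$ by left-translating, using that $\A,\B$ are closed under left translation — though only by elements of $G$ resp.\ $H$, and $g_i\in G$, so this is legitimate), and then to observe that once $h^{-1}A^\#\in p$ with $p$ concentrated on $H_1$, we may intersect everything with $H_1$ and run the $G_1$-criterion verbatim. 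I expect the converse inclusions in both parts to be essentially immediate from the inclusion of algebras and the fact that $\hat e$ and the coset structure are shared between $M$ and $N$.
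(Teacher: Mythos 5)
Your part (1) is essentially correct, though it takes a genuinely different route from the paper. You work with the element-wise criterion of Lemma \ref{2.3}(1) and do coset bookkeeping on sets (write $h=b_ih_1$ with $h_1\in H_1$, replace $A,B$ by $b_i^{-1}A\cap G_1,\ b_i^{-1}B\cap G_1$, intersect with $H_1$, apply the $G_1$-criterion, translate the witness back by $b_i$); this works, and the easy inclusion is as you say. The paper instead stays at the semigroup level: by Proposition \ref{2.9} it suffices to check $r(s*p)=r(s)*r(p)$ for all $s\in S_{ext,G}(N)$, and writing $s=\hat b_i*s'$ with $s'\in S_{ext,G_1}(N)$ and using Lemma \ref{2.12} (each $\hat b_i$ is a weak heir and weak coheir, so $r$ is multiplicative at $\hat b_i$) gives $r(s*p)=r(\hat b_i)*r(s')*r(p)=r(s)*r(p)$ in one line. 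Your approach buys a concrete, quantifier-by-quantifier verification; the paper's buys brevity and uniformity for both parts.

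The genuine gap is in part (2), exactly at the point you call a mild subtlety. Replacing $s$ by $\hat g_i*s$, i.e.\ decomposing $s$ along \emph{left} cosets of $H_1$, does not reduce the $G$-instance to a $G_1$-instance: if $s=\hat g_i*s'$ with $s'\in S_{ext,G_1}(N)$, then for $h\in H_1$ one gets $h^{-1}A^{\#}\in s\iff (hg_i)^{-1}A^{\#}\in s'$, i.e.\ $d_sA^{\#}=(d_{s'}A^{\#})g_i^{-1}$, and the factor $g_i$ cannot be separated from $h$ on the left unless $G$ is abelian (or $g_i$ normalizes $H_1$); unwinding this identity just reproduces the original statement, and it is not true that $d_sA^{\#}\cap H_1$ is governed by a ``restriction'' of $s$ in any naive sense. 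What does work is the \emph{right}-sided decomposition: let $H_1c$ (with $c\in G$ a right-coset representative) be the right coset on which $s$ concentrates and put $s''=s*\widehat{c^{-1}}$, so $s''\in S_{ext,G_1}(N)$ and, since left multiplication by $H_1$ fixes $H_1c$, for $h\in H_1$ one has $h^{-1}A^{\#}\in s\iff h^{-1}\bigl(A^{\#}c^{-1}\cap H_1\bigr)\in s''$; hence $d_sA^{\#}\cap H_1$ is a genuine $G_1$-instance attached to $Ac^{-1}\cap G_1\in\Def_{ext,G_1}(M)$ and $s''$, and your argument then goes through. Note this step also uses that $\Def_{ext,G}(M)$ is closed under right translation by elements of $G$ and that $(Ac^{-1})^{\#}=A^{\#}c^{-1}$, which follows from $*$-elementarity. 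The same sidedness is hidden in the paper's ``(2) has a similar proof'': there one decomposes $s=s'*\hat c$ along right cosets and uses that $\hat c$ is a weak heir, getting $r(p*s)=r(p*s')*r(\hat c)=r(p)*r(s')*r(\hat c)=r(p)*r(s)$.
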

\begin{proof}. (1)$\Leftarrow$ is immediate by Proposition \ref{2.9}. For $\Rightarrow$, let $b_1,\dots,b_n$ be representatives of the left cosets of $G_1$ in $G$. Let $s\in S_{ext,G}(N)$. Then there are $s'\in S_{ext,G_1}(N)$ and $i\leq n$ such that $\hat{b}_i*s'=s$. Hence, using the fact that $\hat{b}_i\in WH_G(N/M)\cap WCH_G(N/M)$ (Lemma \ref{2.12}), we have that
$$r(s*p)=r(\hat{b}_i*s'*p)=r(\hat{b}_i)*r(s'*p)=r(\hat{b}_i)*r(s')*r(p)=r(\hat{b}_i*s')*r(p)=r(s)*r(p)$$
So by Proposition \ref{2.9} $p\in WH_G(N/M)$. (2) has a similar proof.
\end{proof}

Now we shall characterize the notions of weak heirs and weak coheirs in the case where $T$ is stable. So from now on until Lemma \ref{4.8} we assume $T$ is stable and for simplicity also that $G=M$. This means that $\Def_{ext,G}(M)=\Def(M)$, the algebra of definable subsets of $M$, and $S_{ext,G}(M)=S(M)$, the space of complete $1$-types over $M$. Also now $M\prec^*N$ means just that $M\prec N$.

For $A\in \Def(M)$ let $\varphi_A(x;y)$ be an $L(M)$-formula saying that $x\in y\cdot A$. Likewise let $\psi_A(x;y)$ be an $L(M)$-formula saying that $x\in A\cdot y$. Let $$\Delta_M=\{\varphi_A(x;y):A\in \Def(M)\}\mbox{ and }\Delta_M^*=\{\psi_A(x;y):A\in\Def(M)\}$$ We identify formulas that are equivalent in $\M$.
\begin{remark}\label{4.2} Every $L$-formula $\varphi(x)$ over $M$ is equivalent to a $\Delta_M$-formula over $M$ and to a $\Delta_M^*$-formula over $M$.
\end{remark}
\begin{proof}. Let $A=\varphi(M)$. So $A\in\Def(M)$ and $A=\varphi_A(M,e)=\psi_A(M;e)$, where $e$ is the identity element of $G$. Hence $\varphi(x)$ is equivalent to $\varphi_A(x;e)$ and to $\psi_A(x;e)$.
\end{proof}
The following theorem characterizes weak heirs and weak coheirs in terms of local forking.
\begin{theorem}\label{4.3} Assume $q(x)\in S(N)$.\\
(1) $q\in WH_G(N/M)$ iff $q|_{\Delta_M}$ does not fork over $M$.\\
(2) $q\in WCH_G(N/M)$ iff $q|_{\Delta_M^*}$ does not fork over $M$.
\end{theorem}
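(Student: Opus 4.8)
The plan is to reduce each statement to the characterization of weak [co]heirs given by Lemma \ref{2.3} and then translate the combinatorial conditions there into non-forking statements for the appropriate local type. Let me focus on (1); part (2) is entirely dual, replacing the formulas $\varphi_A(x;y)$ (``$x \in y\cdot A$'') by $\psi_A(x;y)$ (``$x \in A\cdot y$'').

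First I would unwind Lemma \ref{2.3}(1) in the present concrete setting. With $G = M$, the algebra $\A$ is just $\Def(M)$, and for $A, B \in \Def(M)$ the set $B^{\#} \cap h^{-1}A^{\#}$ is defined by the formula $\varphi_A(x; h) \wedge \varphi_B(x; e)$ over $N$; since $B$ ranges over all of $\Def(M)$ and $\varphi_B(x;e)$ over $M$ is itself (by Remark \ref{4.2}) a $\Delta_M$-formula, the condition in Lemma \ref{2.3}(1) says exactly: for every $\Delta_M$-formula $\chi(x)$ over $N$ that lies in $q$, if $\chi(x)$ is ``realizable by parameters from $H$'' then it is ``realizable by parameters from $M$'' — more precisely, whenever a $\Delta_M(N)$-formula in $q$ is a conjunct of the form $\varphi_A(x;h)\wedge\varphi_B(x;e)$, we can replace $h\in H$ by some $g\in G$ keeping it in $q$. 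So $q \in WH_G(N/M)$ iff the restriction $q|_{\Delta_M}$ is "finitely satisfiable over $M$ in the $\Delta_M$-sense", i.e.\ every $\Delta_M$-formula in $q$ is consistent with $\Delta_M$-formulas realized over $M$. I would make this precise as: $q|_{\Delta_M}$ is a $\Delta_M$-heir of $q|_{\Delta_M}\!\restriction M$ over $M$.

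The second main step is to invoke the local stability machinery: for a stable theory and a finite (or, by a standard union/compactness argument, arbitrary) set of formulas $\Delta$, a $\Delta$-type $q$ over $N$ is a $\Delta$-heir of its restriction to $M$ if and only if $q$ is the unique $\Delta$-nonforking extension to $N$ of $q\!\restriction M$, equivalently $q$ does not $\Delta$-fork (hence does not fork) over $M$ — this uses that in the stable local context heirs, coheirs, nonforking extensions and definable extensions all coincide, and that $\Delta_M$-forking over $M$ is the same as forking over $M$ for the $\Delta_M$-type. Here one must check that $\Delta_M$ (resp.\ $\Delta_M^*$) is closed enough under the relevant Boolean operations so that the local symmetry/uniqueness results apply; since $\A = \Def(M)$ is a Boolean algebra closed under left (resp.\ right) translation, the family $\Delta_M$ (resp.\ $\Delta_M^*$) is exactly the family of instances of the formulas encoding this, and Remark \ref{4.2} guarantees it is cofinal among all $L(M)$-formulas, so the local notions relative to $\Delta_M$ agree with the global ones once we have chosen the translation-invariant presentation. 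Combining the two steps gives: $q \in WH_G(N/M)$ iff $q|_{\Delta_M}$ does not fork over $M$, which is exactly (1). For (2), the computation in Lemma \ref{2.3}(2) produces instead sets of the form $d_s A^{\#} \cap B^{\#}$, and using Remark \ref{1.6} together with the definition of $d_s$ these translate into $\Delta_M^*$-formulas (right-translation instances), so the same argument with ``heir'' replaced by ``coheir'' / ``finitely satisfiable'' and $\Delta_M$ by $\Delta_M^*$ yields (2).

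The main obstacle I anticipate is the bookkeeping in the first step: verifying that the combinatorial condition of Lemma \ref{2.3} really is equivalent to the local heir (resp.\ coheir) condition for the $\Delta_M$-reduct, and not something strictly weaker or stronger. In particular one must be careful that allowing $s$ to range over all of $S(\B)$ in Definition \ref{2.1}(2) (rather than just over $\hat H$) does not change the class of formulas obtained — this is handled because $d_s A^{\#}$ for varying $s$ still produces only $\Delta_M^*(N)$-definable sets (the formula $\psi_{d_{r(s)}A}$), so the relevant family of formulas is unchanged. Once that translation is pinned down, the appeal to local stability (uniqueness of nonforking extension, and heir $=$ nonforking in the stable case) is standard and I would cite it rather than reprove it.
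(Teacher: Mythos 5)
Your treatment of part (1) is, modulo wording, the paper's own argument: the weak heir condition is exactly the statement that the $\varphi_A$-definitions of $q|_{\Delta_M}$ coincide with those of $p$ (Lemma \ref{4.4}(2)), which gives $q|_{\Delta_M}\subseteq p'$ for the global nonforking extension $p'$ of $p$; conversely, nonforking together with stationarity of $p$ and Remark \ref{4.2} (so that $q|_{\Delta_M}$ really determines $p$) forces the definitions to agree. One caution: in Lemma \ref{2.3}(1) the set $B$ constrains the \emph{parameter} $h$, not the realization variable; your rendering ``$\varphi_A(x;h)\wedge\varphi_B(x;e)$'' puts the side condition on $x$, which is a strictly weaker property and would not let you recover the equality of definitions (take $B$ to be the complement of $d_p\varphi_A(M)$ to see why the parameter-side condition is what does the work). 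This is repairable, but as stated your ``heir'' reformulation is not the weak heir condition.

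The genuine gap is in the backward direction of (2). The weak coheir condition quantifies over all $s\in S(\B)$, and for non-realized $s$ one has $d_sA^{\#}=\psi_A(N;c^{-1})$ where $c\models s$ lies outside $N$ (Lemma \ref{4.4}(3)): this is a trace on $N$ of a set defined with an external parameter. Your justification that such sets are harmless --- that ``$d_sA^{\#}$ for varying $s$ still produces only $\Delta_M^*(N)$-definable sets (the formula $\psi_{d_{r(s)}A}$)'' --- is false: $d_sA^{\#}$ is not determined by $r(s)$; Remark \ref{1.6} only gives $d_sA^{\#}\cap G=d_{r(s)}A$, and if $d_sA^{\#}=(d_{r(s)}A)^{\#}$ held for all $A$ then every $s$ would be a weak heir. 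The missing ingredient, which is the crux of the paper's proof (step $(*)$), is the stable-theoretic fact (Pillay, Lemma I.2.2) that the $\psi_A^*$-definition of $s^{-1}$ over the model $N$ is a positive Boolean combination of instances $\psi_A(x,c)$ with $c\in N$, so that $d_sA^{\#}\cap B^{\#}$ is indeed recorded in $q|_{\Delta_M^*}$. Only with this can one run the comparison device the paper uses: take some weak coheir $q'$ of $p$ (Lemma \ref{2.6}), use nonforking plus stationarity to get $q|_{\Delta_M^*}=q'|_{\Delta_M^*}$, and transfer the membership $d_sA^{\#}\cap B^{\#}\in q'$ back to $q$. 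Nonforking of $q|_{\Delta_M^*}$ by itself only controls $\Delta_M^*$-instances with parameters in $N$, and no density or limit argument over $\hat H$ rescues you, since membership of a pointwise limit of sets in the ultrafilter $q$ is not controlled by the approximating sets. So as written, your proof of (2)$(\Leftarrow)$ does not go through; (2) is not simply ``the same argument with heir replaced by coheir.''
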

Before the proof we need some introductory comments. Let $p\in S(M)$ and $\varphi(x;y)\in L(M)$. Let $d_p\varphi(y)$ be the $L(M)$-formula defining $p|_{\varphi}$, that is for every $c\in M$ we have that $\models d_p\varphi(c)$ iff $\varphi(x,c)\in p$. On the other hand we have the function $d_p:\Def(M)\to\Def(M)$, defined in Definition \ref{1.2}.
\begin{lemma}\label{4.4}
Let $q(x)\in S(N),\ p=q|_M\in S(M),\ A\in \Def(M)$ and let $c\in\M$ realize $q$.\\
(1) $d_pA=(d_p\varphi_A(M))^{-1}$\\
(2) $(d_pA)^{\#}=d_qA^{\#}$ iff $d_p\varphi_A(y)\equiv d_q\varphi_A(y)$.\\
(3) $d_qA^{\#}=\psi_A(N;c^{-1})$
\end{lemma}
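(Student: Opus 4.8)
The plan is to obtain all three identities by unwinding Definitions \ref{1.2} and \ref{2.1} together with the choice of the formulas $\varphi_A,\psi_A$; the only genuinely model-theoretic input is the definability of types in the stable theory $T$, which enters in part (2).

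For (1) I would start from the observation that, since $\varphi_A(x;y)$ expresses $y^{-1}x\in A$, for $c\in M$ the instance $\varphi_A(x;c)$ defines the translate $cA$; hence $d_p\varphi_A(M)=\{c\in M:cA\in p\}$. Now for $g\in G$ one has $g\in d_pA$ iff $g^{-1}A\in p$ iff $g^{-1}\in d_p\varphi_A(M)$, i.e. $g\in(d_p\varphi_A(M))^{-1}$. The only thing to watch is that the relevant translate is $cA$ and not $c^{-1}A$; everything else is bookkeeping about inverses.

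For (2) I would apply the formula version of (1) both over $M$ and over $N$. By (1), $(d_pA)^{\#}$ is the subset of $H$ defined by the $L(M)$-formula $d_p\varphi_A(y^{-1})$; reading the same computation inside $N$, $d_qA^{\#}$ is the subset of $H$ defined by $d_q\varphi_A(y^{-1})$, where $d_q\varphi_A(y)$ is the $\varphi_A$-definition of $q$ --- this is the point where stability is used, since we need $q$ to be definable over $N$. Applying the bijection $y\mapsto y^{-1}$ of $H$, the equality $(d_pA)^{\#}=d_qA^{\#}$ is therefore equivalent to: the $L$-formulas $d_p\varphi_A(y)$ (over $M\subseteq N$) and $d_q\varphi_A(y)$ (over $N$) define the same subset of $H=G(N)$. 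Both formulas are over $N$, and (after conjoining $y\in G$, which changes nothing) define subsets of $N$; since $N\prec\M$, two $L(N)$-formulas with the same trace on $N$ are equivalent in $N$, hence in $\M$. So the displayed equality holds iff $d_p\varphi_A(y)\equiv d_q\varphi_A(y)$. This passage between ``equal as a subset of $H$'' and ``equivalent in $\M$'', resting on definability of $q$ over $N$ and on $N\prec\M$, is the delicate step of the lemma.

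For (3) I would fix $c\in\M$ realizing $q$ and compute, for $h\in H$: $h\in d_qA^{\#}$ iff $h^{-1}A^{\#}\in q$ iff $c$ belongs to $h^{-1}A^{\#}$ computed in $\M$ (which is exactly what $c\models q$ means) iff $hc\in A(\M)$. On the other hand, since $\psi_A(x;y)$ expresses $xy^{-1}\in A$, we have $\M\models\psi_A(h;c^{-1})$ iff $hc\in A(\M)$, i.e. $h\in\psi_A(N;c^{-1})$ iff $hc\in A(\M)$. Comparing the two characterizations gives $d_qA^{\#}=\psi_A(N;c^{-1})$. Here again the only care needed is the tracking of inverses and the fact that membership ``$c\in B$'' for $B\in\Def(N)$ is read off in $\M$ through the realizing element $c$.
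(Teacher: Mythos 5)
Your proposal is correct and follows essentially the same route as the paper: part (1) by unwinding the definitions of $d_p$ and the $\varphi_A$-definition, part (2) by applying (1) over $M$ and over $N$ and noting that equality of the defined subsets of $H=G(N)$ amounts to equivalence of the defining formulas (with definability of types from stability, as in the paper's introductory comments), and part (3) by the same inverse-tracking computation via a realization $c\models q$ in $\M$. No gaps.
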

\begin{proof}. (1) Let $b\in M$. We have that
$$b\in d_pA\iff b^{-1}A\in p\iff \varphi_A(x;b^{-1})\in p\iff b\in (d_p\varphi_A(M))^{-1}$$

(2)  By (1), $(d_pA)^{\#}=(d_p\varphi_A(N))^{-1}$ and $d_qA^{\#}=(d_q\varphi_{A^{\#}}(N))^{-1}$. So (2) follows.

(3) Let $b\in N$. We have that
$$b\in d_qA^{\#}\iff c\in b^{-1}A^{\M}\iff b\in A^{\M}c^{-1}\iff b\in \psi_A(N;c^{-1})$$
where $A^{\M}$ is the subset of $\M$ defined by the formula defining $A$ in $M$. So we are done.
\end{proof}
\begin{proof}{\em of Theorem \ref{4.3}.} Let $p=q|_M$. (1)$\Rightarrow$ Assume that $q$ is a weak heir over $M$. Then for every $A\in\Def(M),\ (d_pA)^{\#}=d_qA^{\#}$, hence by Lemma \ref{4.4}(2), $d_q\varphi_A(y)\equiv d_p\varphi_A(y)$.

Let $p'\in S(\M)$ be the nonforking extension of $p$. Then for every formula $\chi(x;\bar{y})$, the formula $d_p\chi(\bar{y})$ defines $p'|_{\chi}$. When $\chi=\varphi_A$, we have that $d_p\chi\equiv d_q\chi$, hence $q|_{\Delta_M}\subseteq p'$ and $q|_{\Delta_M}$ does not fork over $M$.

$\Leftarrow$ Assume that $q|_{\Delta_M}$ does not fork over $M$. Let $q'\in S(\M)$ be an extension of $q|_{\Delta_M}$ that does not fork over $M$. By Remark \ref{4.2}, $q|_{\Delta_M}$ contains a complete type over $M$, hence $p\subseteq q|_{\Delta_M}$ and $q'$ is a nonforking extension of $p$. Therefore for every $A\in \Def(M)$ we have
$$d_p\varphi_A(y)\equiv d_{q'}\varphi_A(y)\equiv d_q\varphi_A(y)$$
Thus by Lemma \ref{4.4}(2), $(d_pA)^{\#}=d_qA^{\#}$ and $q$ is a weak heir of $p$.

(2)$\Rightarrow$ Assume $q$ is a weak coheir over $M$. Hence for every $s\in S(N)$ and $A,B\in\Def(M)$, if $d_sA^{\#}\cap B^{\#}\in q$, then $d_sA^{\#}\cap B^{\#}\cap M\neq\0$. By Lemma \ref{4.4}(3), $d_sA^{\#}=\psi_A(N;c^{-1})$, where $c\in\M$ realizes $s$. Hence we get that for every $c\in N$, if $\psi_A(x;c^{-1})\in q$ and $\varphi(x)\in p$, then the formula $\psi_A(x;c^{-1})\land\varphi(x)$ is realized by an element of $M$ (consider $s=tp(c/M)$ and $B=\varphi(M)$).

Let $p|N$ be the coheir of $p$ in $S(N)$. We see that $q|_{\Delta_M^*}\subseteq p|N$. Clearly $p|N$ does not fork over $M$, hence neither does $q|_{\Delta_M^*}$.

$\Leftarrow$ Assume that $q|_{\Delta_M^*}$ does not fork over $M$. The type $p=q|_M\in S(M)$ is stationary, therefore $q|_{\Delta_M^*}$ is the only nonforking extension of $p$ in $S_{\Delta_M^*}(N)$. Let $q'\in S(N)$ be a weak coheir of $p$. By (2)$\Rightarrow$, $q'|_{\Delta_M^*}$ does not fork over $M$. Hence $q|_{\Delta_M^*}=q'|_{\Delta_M^*}$. We shall prove that $q$ is also a weak coheir over $M$.

Let $s\in S(N),\ A,B\in\Def(M)$ and assume that $d_sA^{\#}\cap B^{\#}\in q$. We have that

\begin{quote}$(*)$\ \ \ \ \ \ \ \ \ \ \ \ \ \ \ \ \ \ \ \ \ \ \ $d_sA^{\#}\cap B^{\#}\in q|_{\Delta_M^*}$
\end{quote}
meaning that $d_sA^{\#}\cap B^{\#}$ is definable (in $N$) by a $\Delta_M^*$-formula (with parameters from $N$) belonging to $q$.
Indeed, first notice that obviously $B^{\#}\in q|_{\Delta_M^*}$. We shall prove that also $d_sA^{\#}\in q|_{\Delta_M^*}$.

Let $d\in\M$ realize $s$ and let $s^{-1}=tp(d^{-1}/N)$. By Lemma \ref{4.4}(3),
$$d_sA^{\#}=\psi_A(N;d^{-1})=d_{s^{-1}}\psi_A^*(N)$$
where $\psi_A^*(y;x)$ is the formula $\psi_A(x;y)$ with the roles of variables $x,y$ reversed, so that in $\psi_A^*$, $x$ is the parameter variable. By \cite[Lemma I.2.2]{pillay}, $d_{s^{-1}}\psi^*_A (x)$ is equivalent in $N$ to a positive Boolean combination of formulas $\psi_A(x,c),c\in N$, hence it is a $\Delta_M^*$-formula over $N$. This proves $(*)$.

Since $q|_{\Delta_M^*}=q'|_{\Delta_M^*}$, we get that $d_sA^{\#}\cap B^{\#}\in q'$. As $q'$ is a weak coheir over $M$, we conclude that $d_sA^{\#}\cap B^{\#}\cap M\neq\0$ and $q$ is a weak coheir over $M$.
\end{proof}
By Theorem \ref{4.3}, in the stable case both sets $WCH_G(N/M)$ and $WH_G(N/M)$ are closed in $S_G(N)$. Also, in this case the groups $\G,\G',\Hh$ and $\Hh'$ from Section 3 are equal to the set $CH_G(N/M)$ that is thhe set if generic types in $S_G(N)$. It is the only minimal left ideal in $S_G(N)$ and the group epimorphisms $f$ from the proof of Theorems \ref{3.1} and \ref{3.4} are restrictions.
\begin{corollary}\label{4.5} Assume $a\in\M$. Then $tp(a/N)\in WH_G(N/M)$ iff $tp(a^{-1}/N)\in WCH_G(N/M)$.
\end{corollary}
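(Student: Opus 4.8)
The plan is to deduce Corollary \ref{4.5} from Theorem \ref{4.3} by using that group inversion $\iota\colon x\mapsto x^{-1}$ is an $L_G$-formula over $\emptyset$ defining a bijection of $G$ onto $G$, and that this bijection interchanges the families $\Delta_M$ and $\Delta_M^*$. By Theorem \ref{4.3}, $tp(a/N)\in WH_G(N/M)$ iff $tp(a/N)|_{\Delta_M}$ does not fork over $M$, while $tp(a^{-1}/N)\in WCH_G(N/M)$ iff $tp(a^{-1}/N)|_{\Delta_M^*}$ does not fork over $M$. So it suffices to show that $tp(a/N)|_{\Delta_M}$ forks over $M$ if and only if $tp(a^{-1}/N)|_{\Delta_M^*}$ does.

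First I would record the formula correspondence. For $A\in\Def(M)$ one checks directly in $\M$ that $\psi_A(x;y)$ is equivalent to $\varphi_{A^{-1}}(x^{-1};y^{-1})$, since both say $x\in A\cdot y$. As $\Def(M)$ is closed under $\iota$ (if $A$ is defined by $\vartheta(x)$ over $M$, then $A^{-1}$ is defined by $\vartheta(x^{-1})$ over $M$), the map $A\mapsto A^{-1}$ is a bijection of $\Def(M)$; and since here $H=G(N)=N$ is a group, $c\mapsto c^{-1}$ is a bijection of $N$. Hence for $A\in\Def(M)$ and $c\in N$,
$$\psi_A(x;c)\in tp(a^{-1}/N)\iff a^{-1}\in A\cdot c\iff a\in(A\cdot c)^{-1}=c^{-1}A^{-1}\iff \varphi_{A^{-1}}(x;c^{-1})\in tp(a/N).$$
Thus $tp(a^{-1}/N)|_{\Delta_M^*}$ is exactly the image of $tp(a/N)|_{\Delta_M}$ under the substitution $\varphi_A(x;c)\leftrightarrow\psi_{A^{-1}}(x;c^{-1})$; equivalently, since $\iota$ is a bijection it commutes with finite intersections, so $\iota$ maps the solution set of each finite conjunction of $\Delta_M$-formulas lying in $tp(a/N)$ onto the solution set of the corresponding finite conjunction of $\Delta_M^*$-formulas lying in $tp(a^{-1}/N)$.

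Then I would transfer forking across $\iota$. Because $\iota$ is an $\emptyset$-definable (hence $M$-definable) bijection of $G$, a definable subset of $G$ over parameters containing $M$ forks over $M$ exactly when its $\iota$-image does: from $\theta(x)\vdash\bigvee_j\delta_j(x)$ with each $\delta_j$ dividing over $M$ one gets $\theta(x^{-1})\vdash\bigvee_j\delta_j(x^{-1})$, and $\delta_j(x^{-1})$ divides over $M$ iff $\delta_j(x)$ does, since $\iota$ carries a witnessing $M$-indiscernible array of solutions to another such array. Applying this to the finite conjunctions from the previous paragraph shows that $tp(a/N)|_{\Delta_M}$ contains a formula forking over $M$ iff $tp(a^{-1}/N)|_{\Delta_M^*}$ does. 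Negating and invoking Theorem \ref{4.3} finishes the proof.

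The step requiring the most care is the bookkeeping in the second paragraph: matching up $\Delta_M$-formulas over $N$ with $\Delta_M^*$-formulas over $N$ in a way that respects finite conjunctions and really is a bijection — the only substantive inputs being that $\Def(M)$ and $N$ are closed under inversion, both of which hold in the present set-up ($G=M$, $H=N$). Once that is in place, the transfer of forking is routine, amounting to nothing more than invariance of forking under an $\emptyset$-definable bijection.
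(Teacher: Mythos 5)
Your proposal is correct and follows essentially the same route as the paper: both reduce to Theorem \ref{4.3} and use the group inversion $x\mapsto x^{-1}$, noting that it carries the solution set of $\varphi_A(x;b)$ to that of $\psi_{A^{-1}}(x;b^{-1})$, so that non-forking of $tp(a/N)|_{\Delta_M}$ over $M$ transfers to non-forking of $tp(a^{-1}/N)|_{\Delta_M^*}$ by invariance of forking under a $0$-definable bijection. Your write-up just makes explicit the bookkeeping (closure of $\Def(M)$ under inversion, finite conjunctions, dividing witnesses) that the paper leaves implicit.
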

\begin{proof}. Let $i:\M\to\M$ be the group inversion, that is $i(x)=x^{-1}$. So $i$ maps $a$ to $a^{-1}$. Also, given a formula $\varphi_A(x;y)\in\Delta_M$ and $b\in\M$, $i$ maps $\varphi_A(\M;b)$ to the set $\psi_{A^{-1}}(\M;b^{-1})$, since for $x,y\in\M$,
$$x\in yA\iff x^{-1}\in A^{-1}y^{-1}$$
Therefore $tp(a/B)|_{\Delta_M}$ does not fork over $M$ iff $tp(a^{-1}/N)|_{\Delta_M^*}$ does not fork over $M$, so we are done by Theorem \ref{4.3}.
\end{proof}
We can also prove Corollary \ref{4.5} directly, using Proposition \ref{2.9} and the fact that in the stable case $*$ is just the independent multiplication of types \cite{ne1}, that is for $p,q\in S(N)$, $p*q=tp(ab/N)$ for any $N$-independent $a\models p$ and $b\models q$. This fact implies also that in the stable case $*$ in $S(N)$ is continuous not only in the first coordinate, but also in the second coordinate. The next corollary corresponds to Corollary \ref{2.10}.
\begin{corollary}\label{4.6} Assume $T$ is stable and $q\in S_G(N)$. Then $q\in WCH_G(N/M)$ iff $r(q*\hat{h})=r(q)*r(\hat{h})$ for every $h\in G(N)$.
\end{corollary}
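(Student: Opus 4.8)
The plan is to prove the two directions separately, with stability entering only in the converse. The implication from left to right requires nothing new: if $q\in WCH_G(N/M)$, then by Proposition~\ref{2.9}(2) we have $r(q*s)=r(q)*r(s)$ for every $s\in S_G(N)$, and specializing $s=\hat{h}$ with $h\in G(N)$ gives exactly the stated equality. So no appeal to stability is needed here.

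For the converse, suppose $r(q*\hat{h})=r(q)*r(\hat{h})$ for every $h\in G(N)$. By Proposition~\ref{2.9}(2) it suffices to show $r(q*s)=r(q)*r(s)$ for every $s\in S_G(N)$. I would consider the two functions $\Phi,\Psi\colon S_G(N)\to S_G(M)$ defined by $\Phi(s)=r(q*s)$ and $\Psi(s)=r(q)*r(s)$ and argue that both are continuous. For $\Phi$: the restriction $r$ is continuous, and in the stable case the operation $*$ on $S_G(N)$ is continuous in the second coordinate (as recalled just after Corollary~\ref{4.5}), so $s\mapsto q*s$ is continuous. For $\Psi$: again $r$ is continuous, and $*$ on $S_G(M)$ is likewise continuous in the second coordinate --- either by the same fact about stable theories applied with $M$ in place of $N$, or via the homeomorphic semigroup isomorphism $S_G(M)\cong CH_G(N/M)$ of Corollary~\ref{2.11} together with the fact (Remark~\ref{2.7}) that $CH_G(N/M)$ is a closed sub-semigroup of $S_G(N)$ --- so $s\mapsto r(q)*r(s)$ is continuous.

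By hypothesis $\Phi$ and $\Psi$ agree on $\hat{H}=\{\hat{h}:h\in G(N)\}$, which is dense in $S_G(N)$; since $S_G(M)$ is Hausdorff, two continuous maps into it agreeing on a dense set coincide, so $\Phi=\Psi$ everywhere and $q$ is a weak coheir over $M$ by Proposition~\ref{2.9}(2). The proof is short, and its only genuinely essential ingredient --- the point where stability is truly used --- is the continuity of $*$ in the second coordinate: Corollary~\ref{2.10} handled weak heirs by multiplying by $\hat{h}$ on the left and invoking the left-continuity of $*$, which holds in general; the present statement is the dual one for weak coheirs, where one multiplies by $\hat{h}$ on the right, so right-continuity is what is required, and in general $*$ need not have it, whereas in the stable case it does.
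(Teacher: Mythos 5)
Your proof is correct and is essentially the argument the paper intends: the paper states Corollary \ref{4.6} without an explicit proof, but the preceding remark that in the stable case $*$ is continuous in the second coordinate, together with the phrase that the corollary ``corresponds to Corollary \ref{2.10}'', points to exactly your density argument over $\hat{H}$ dual to the one used for Corollary \ref{2.10}. Your identification of second-coordinate continuity (available only under stability) as the essential ingredient, with the forward direction being a direct specialization of Proposition \ref{2.9}(2), matches the intended reasoning.
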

Also the proof of Theorem \ref{4.3}(2)$\Rightarrow$ shows that for $q\in S(N)$ we have that $q\in WCH_G(N/M)$ iff for every $c\in N$ and $A,B\in \Def(N)$, if $A^{\#}c\cap B^{\#}\in q$, then $A^{\#}c\cap B^{\#}\cap M\neq\0$.

Now we assume additionally that $G=M$ is abelian. Since in a stable theory forking independence is symmetric and $*$ in $S(M)$ is the free multiplication of types, we get that $(S(M),*)$ is a commutative semigroup (in fact, $(S(M),*)$ is commutative iff $M$ is abelian). In the combinatorial case this provides examples where $(S(\B),*)$ is commutative.

By Proposition \ref{3.8} we get that in the stable case, if $G$ is abelian, then $WH_G(N/M)=WCH_G(N/M)$. (This follows also from Theorem \ref{4.3}, since in the abelian case $\Delta_M=\Delta_M^*$.) We can do slightly better.
\begin{corollary}\label{4.7} If $G$ is abelian-by-finite, then $WH_G(N/M)=WCH_G(N/M)$.
\end{corollary}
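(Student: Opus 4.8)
The plan is to reduce the statement to the abelian case, which is already settled via Proposition~\ref{3.8} (or Theorem~\ref{4.3}), by exploiting the finite-index transfer recorded in Lemma~\ref{4.1}. The first step is to extract from the purely group-theoretic hypothesis a \emph{definable} abelian subgroup $G_1\leq G$ of finite index. Since $G$ is abelian-by-finite it contains some abelian subgroup of finite index; by Zorn's lemma this subgroup sits inside a maximal abelian subgroup $G_1$, which still has finite index in $G$. A maximal abelian subgroup is self-centralizing, so $G_1=C_G(G_1)$; being a type-definable subgroup of finite index, $G_1$ is definable and in fact equals $C_G(a_1,\dots,a_k)$ for finitely many $a_1,\dots,a_k\in G_1$. (If one insists on $0$-definability for a clean appeal to Lemma~\ref{4.1}, note that in the present situation $G=M$, so definability over $M$ is harmless — the content of Lemma~\ref{4.1} is the finiteness of the index, not the choice of parameters — or one may simply absorb $a_1,\dots,a_k$ into the language.) Put $H_1=G_1(N)\leq H=G(N)$; then $[H:H_1]=[G:G_1]=:n$, $G_1=H_1\cap G$, and $G$ meets every coset of $H_1$ in $H$, so one may fix $b_1,\dots,b_n\in G$ representing simultaneously the cosets of $G_1$ in $G$ and of $H_1$ in $H$.

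Next I would run the coset decomposition exactly as in the proof of Lemma~\ref{4.1}. Given an arbitrary $q\in S_{ext,G}(N)$, it concentrates on a unique coset $b_iH_1$, so there is $q'\in S_{ext,G}(N)\cap[H_1]=S_{ext,G_1}(N)$ with $q=\hat b_i\ast q'$. Since $b_i\in G$, Lemma~\ref{2.12}(3),(4) gives
$$q\in WH_G(N/M)\iff q'\in WH_G(N/M),\qquad q\in WCH_G(N/M)\iff q'\in WCH_G(N/M),$$
so it suffices to treat $q'\in S_{ext,G_1}(N)$. For such $q'$, Lemma~\ref{4.1} yields
$$q'\in WH_G(N/M)\iff q'\in WH_{G_1}(N/M),\qquad q'\in WCH_G(N/M)\iff q'\in WCH_{G_1}(N/M).$$
Finally, $G_1$ is abelian, hence $H_1$ is abelian and the semigroup $(S_{ext,G_1}(N),\ast)$ is commutative (as in the case $G=M$: in the stable setting $\ast$ on external types of a definable abelian group is the free, and hence symmetric, multiplication of types); so Proposition~\ref{3.8}(3), applied to the pair $\Def_{ext,G_1}(M),\Def_{ext,G_1}(N)$, gives $WH_{G_1}(N/M)=WCH_{G_1}(N/M)$. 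Chaining all these equivalences produces $q\in WH_G(N/M)\iff q\in WCH_G(N/M)$, which is the desired equality of sets.

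I expect the only genuine obstacle to be the opening step: manufacturing a definable (and, for a frictionless citation of Lemma~\ref{4.1}, $0$-definable) abelian subgroup of finite index. The passage through a maximal abelian, self-centralizing subgroup together with the standard fact that a type-definable subgroup of finite index is definable disposes of it. Everything afterwards is routine bookkeeping with Lemmas~\ref{2.12} and \ref{4.1} and Proposition~\ref{3.8}, mirroring the reduction already used inside the proof of Lemma~\ref{4.1}.
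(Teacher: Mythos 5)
Your proof is correct and follows essentially the same route as the paper: reduce to a definable abelian subgroup $G_1$ of finite index, settle the abelian case via commutativity of $S_{ext,G_1}(N)$ and Proposition~\ref{3.8}, and transfer back using the coset decomposition $q=\hat b_i * q'$ together with Lemmas~\ref{2.12} and~\ref{4.1}. The only difference is cosmetic: where the paper simply says ``by stability we may assume $G_1$ is definable,'' you spell out the standard argument (maximal abelian subgroups are self-centralizing, and centralizers satisfy the chain condition), which is fine.
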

\begin{proof}. Assume $G$ is abelian-by-finite. So there is an abelian subgroup $G_1$ of finite index in $G$ and by stability we may assume that $G_1$ is definable. Let $H_1=G_1(N)$. By the discussion above, $WH_{G_1}(N/M)=WCH_{G_1}(N/M)$. Let $b_1,\dots,b_n\in G$ be representatives of the left cosets of $G_1$ in $G$ and let $p\in S_G(N)$. Then there are $p'\in S_{G_1}(N)$ and $i\leq n$ such that $p=\hat{b}_i*p'$. We are done by Lemmas \ref{4.1} and \ref{2.12}.
\end{proof}

Now we waive the assumption that $T$ is stable. We say that $G$ has the {\em group ideals property} (shortly gip) if every (equivalently some) minimal left ideal in $S_{ext,G}(M)$ is a group. Let $\K_M,\R_M$ be the set of kernels and images of the functions $d_p$ for almost periodic $p\in S_{ext,G}(M)$. So $G$ has the gip iff $\R_M$ is a singleton.  In the model theoretic set-up Theorem \ref{3.1} translates into Theorem \ref{3.4}. Also Lemma \ref{3.3} translates as follows.
\begin{lemma}\label{4.8} Assume $M\prec^* N$ and the group $H=G(N)$ has the gip. Then also $G$ has the gip.
\end{lemma}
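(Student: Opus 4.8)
The plan is to obtain Lemma~\ref{4.8} as the model-theoretic incarnation of Lemma~\ref{3.3}. Recall from the end of Section~1 that once $M\prec^*N$ we put $\A=\Def_{ext,G}(M)$, $\B=\Def_{ext,G}(N)$ and $A^{\#}=P_A(N)$ for $A\in\A$; by \cite[Lemma 1.2]{ne2} the groups $G=G(M)\prec H=G(N)$ together with $\A$ and $\B$ satisfy all the hypotheses of the combinatorial set-up. With this identification $S(\A)=S_{ext,G}(M)$ and $S(\B)=S_{ext,G}(N)$ as $G$-flows and semigroups, and minimal left ideals on one side correspond to minimal left ideals on the other.

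First I would unwind the definition of gip on both sides: by definition $H$ has the gip iff every (equivalently, some) minimal left ideal in $S_{ext,G}(N)=S(\B)$ is a group, and $G$ has the gip iff every minimal left ideal in $S_{ext,G}(M)=S(\A)$ is a group. Hence, after the translation of notation above, the desired implication is verbatim the statement of Lemma~\ref{3.3}, and it suffices to invoke that lemma.

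For the reader it may be worth pointing out which ingredients of the proof of Lemma~\ref{3.3} are in play here: Lemma~\ref{2.13}(2) lifts a given $I\triangleleft_m S_{ext,G}(M)$ to some $I''\triangleleft_m S_{ext,G}(N)$ with $r[I'']=I$ --- this is the one place where $M\prec^*N$ (rather than merely $M\prec N$), and hence $G\prec H$, is needed; Corollary~\ref{2.11} supplies the $*$-isomorphism $r:CH_G(N/M)\to S_{ext,G}(M)$ used to transport idempotents downstairs; and Proposition~\ref{2.9}(2) on weak coheirs carries the relevant idempotent relations across $r$. There is essentially no obstacle: the only point demanding attention is that the combinatorial set-up genuinely applies to $(G,H,\A,\B)$, and this has already been arranged in Section~1 via \cite[Lemma 1.2]{ne2}.
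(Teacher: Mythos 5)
Your proposal is correct and matches the paper exactly: the paper gives no separate argument for Lemma~\ref{4.8}, stating only that Lemma~\ref{3.3} translates into it via the identification $\A=\Def_{ext,G}(M)$, $\B=\Def_{ext,G}(N)$ arranged in Section~1, which is precisely your reduction. Your added remarks on which ingredients of the proof of Lemma~\ref{3.3} are in play (Lemma~\ref{2.13}(2), Corollary~\ref{2.11}, Proposition~\ref{2.9}) are accurate but not needed beyond the translation itself.
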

Is the gip assumption model theoretic? The next proposition suggests a positive answer.
\begin{proposition}\label{4.9} Assume $G(x)$ is a formula defining groups in models of $T$ and $\kappa\geq|T|$. Then the following are equivalent.\\
(1) $G(M)$ has the gip for every model $M$ of $T$.\\
(2) $G(M)$ has the gip for every model $M$ of $T$ of power $\kappa$.
\end{proposition}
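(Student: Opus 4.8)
The implication (1)$\Rightarrow$(2) is trivial, so the plan is to prove (2)$\Rightarrow$(1). First I would recall that, by Fact \ref{1.4}(3) and Fact \ref{1.5}, the group $G(M)$ has the gip if and only if every nonempty member of every maximal generic $G(M)$-subalgebra of $\Def_{ext,G}(M)$ is generic; equivalently, since the set $\R_M$ is a singleton exactly when all $R\in\R_M$ coincide, the failure of gip is witnessed by a single externally definable subset of $G(M)$ that is strongly generic but whose generated subalgebra is \emph{not} maximal among the generic ones — more concretely, the failure is witnessed by two almost periodic types $p,q$ with $\Img d_p\subsetneq\Img d_q$. The key is to turn this into a \emph{first-order} (or at least $L_{ext}$-type-definable over a set of size $|T|$) condition, so that a downward/upward Löwenheim--Skolem argument applies. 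This is where the hypothesis $\kappa\ge|T|$ enters: the relevant data (the relevant externally definable sets, the finitely many translates witnessing genericity, the idempotents) involve only $|T|$-many formulas.

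The core step is to show that the statement ``$G(M)$ has the gip'' is preserved under $\prec^*$ in both directions among models of size $\ge\kappa$. Preservation \emph{downward} (from $N$ to $M$ when $M\prec^* N$) is exactly Lemma \ref{4.8}. For the \emph{upward} direction I would argue as follows: given $M$ with the gip, build $M\prec^* N$ with $|N|=\kappa$ (or any prescribed size $\ge|T|+|M|$) using the standard construction of $*$-elementary extensions recalled in Section 1 (take an elementary extension of $M_{ext}$ of the desired cardinality and restrict). By Lemma \ref{4.8} applied contrapositively it is not immediate that $N$ inherits the gip — Lemma \ref{4.8} goes the wrong way — so instead I would show directly that if $N$ \emph{fails} the gip, then the failure reflects down to $M$. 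Concretely, a failure of gip in $N$ gives externally definable $B_0\subsetneq B_1\subseteq G(N)$ lying in a common minimal left ideal's image-algebras with $\Img d_{q_0}\subsetneq\Img d_{q_1}$; using Remark \ref{1.6} and the fact that $\mbox{}^\#$ is onto $\A$ after intersecting with $G$, together with Lemma \ref{2.13} (lifting/projecting minimal left ideals along $r$) and Corollary \ref{2.11} (the coheir section $CH(\B/\A)\cong S(\A)$), one transports the strict inclusion of image-algebras back to $S(\A)=S_{ext,G}(M)$, contradicting the gip of $M$. Iterating the downward and upward steps (and using that any two models of $T$ of size $\ge\kappa$ have a common $*$-elementary extension of size $\ge\kappa$, obtained by amalgamating their $L_{ext}$-expansions over $T_{ext}$, which has a model in every cardinality $\ge|T_{ext}|=|T|$) shows that gip is constant on models of each fixed large cardinality, and then the downward step from an arbitrary model to one of size $\kappa$ finishes the argument.

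The main obstacle I anticipate is the \emph{upward reflection}: Lemma \ref{4.8} only handles one direction, and passing from a gip failure upstairs to a gip failure downstairs requires care in identifying \emph{which} externally definable sets over $N$ descend to $M$ and in checking that the relevant algebraic inclusion $\Img d_{q_0}\subsetneq\Img d_{q_1}$ survives restriction. The subtlety is that $\Img d_q$ for $q\in S(\B)$ need not restrict to $\Img d_{r(q)}$ set-theoretically, only that $d_qB^\#\cap G=d_{r(q)}(B^\#\cap G)$ (Remark \ref{1.6}); so one must work with a coheir extension $q=p^\B$ of a suitable almost periodic $p\in S(\A)$, for which $\Img d_q$ is literally $(\Img d_p)^\#$ by Corollary \ref{2.11}, and argue that if \emph{every} minimal left ideal in $S(\A)$ were a group then by Lemma \ref{3.3}'s method (run in reverse, lifting $I$ to $I''$ via a coheir) every minimal left ideal in $S(\B)$ restricted to the coheir copy $j[S(\A)]$ would already be a group, and then use that $S(\B)$ is generated over this copy by $\hat H$ to conclude $\R_N=\R_M$ is a singleton. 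Making this last genericity-transfer precise — essentially the converse direction of the $\R$-analysis underlying Theorem \ref{3.1} — is the delicate point; once it is in place, the Löwenheim--Skolem bookkeeping with the bound $\kappa\ge|T|$ is routine.
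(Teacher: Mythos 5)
Your reduction of (2)$\Rightarrow$(1) to transfer of the gip along $\prec^*$ in \emph{both} directions is where the proposal breaks down, and it is precisely the direction you flag as ``delicate'' that the paper never proves and that your sketch does not establish. Lemma \ref{4.8} only gives that the gip descends from $N$ to $M$ when $M\prec^* N$ (equivalently, failure ascends); that suffices for models of power $<\kappa$ (take a $*$-elementary extension of power $\kappa$ and apply \ref{4.8}), but for a model $N$ of power $>\kappa$ you need to push a \emph{failure} of the gip down to some model of power $\kappa$, i.e.\ the converse of Lemma \ref{4.8}. Your argument for that converse is not correct as sketched: failure of the gip is \emph{not} witnessed by a strict inclusion $\Img d_{q_0}\subsetneq\Img d_{q_1}$ --- by Fact \ref{1.5} the members of $\R$ are the \emph{maximal} generic subalgebras, so distinct ones are incomparable (compare Lemma \ref{3.0.3}(1)) --- and the proposed transfer via the coheir copy $j[S(\A)]$ and density of $\hat H$ does not control $\R_N$, because $\Def_{ext,G}(N)$ contains many externally definable sets with no coherent trace behaviour over $M$; density of $\hat H$ in $S(\B)$ says nothing about which $H$-subalgebras are maximal generic. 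You would also need every large model to admit a $\prec^*$-substructure of power $\kappa$, and a $\prec^*$-amalgamation of arbitrary models over $T_{ext}$; neither is justified (the languages $L_{ext,M}$ differ from model to model), and neither is needed.

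The paper's actual argument for large $N$ avoids $\prec^*$ entirely and is the idea missing from your plan: a failure of the gip in $N$ is, via Fact \ref{1.5}, witnessed \emph{finitely} --- a strongly generic $A\in R_0$, finitely many $B_1,\dots,B_n$ in a second maximal generic algebra $R_1$, elements $h_1,\dots,h_n\in H$, and a Boolean term $\tau$ with $\tau(A,h_1B_1,\dots,h_nB_n)$ nonempty and non-generic. One then names $A,B_1,\dots,B_n$ by new unary predicates, obtaining $N'$, and takes an ordinary elementary substructure $M'\prec N'$ of power $\kappa$ containing $h_1,\dots,h_n$. Strong genericity of $A$, genericity of the algebra generated by the $B_i$, and non-genericity of the displayed Boolean combination are all schemes of first-order sentences of the expanded language (genericity bounded by the number of translates), so they reflect to $M=M'|_L$, where the traces $A\cap M$, $B_i\cap M$ are externally definable; extending to maximal generic subalgebras $R_0'\neq R_1'$ of $\Def_{ext,G}(M)$ contradicts (2). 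This finite-witness reflection is both the key step and the reason the hypothesis is only $\kappa\geq|T|$; without it (or a genuine proof that the gip ascends along $\prec^*$, which is a much stronger and unproven claim), your plan does not close.
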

\begin{proof}. (1)$\Rightarrow$(2) is obvious. We prove (2)$\Rightarrow$(1). Suppose for contradiction that (2) holds but (1) fails and in some $N\models T$, the group $H=G(N)$ does not have the gip. There are two cases, depending on whether $\|N\|<\kappa$.

{\em Case 1.} $\|N\|<\kappa$ Let $N'$ be a $*$-elementary extension of $N$ of power $>\kappa$. Since $H$ does not have the gip, by Lemma \ref{4.8}aso $H':= G(N')$ does not have the gip, so this case reduces to the following Case 2.

{\em Case 2.} $\|N\|>\kappa$ Here we need to work harder. Since $H$ does not have the gip, the set $\R_N$ has at least two distinct elements $R_0$ and $R_1$. By Fact \ref{1.5} $R_0\not\subseteq R_1$. Choose $A\in R_0$ with $A\not\in R_1$. Again by Fact \ref{1.5} the set $A$ is strongly generic in $H$ and the $H$-subalgebra of $\Def_{ext,G}(N)$ generated by $R_1\cup\{A\}$ is not generic. This means that there are a Boolean term $\tau(x_0,\dots,x_n)$ and some $B_1,\dots,B_n\in R_1$ and $h_1,\dots,h_n\in H$ such that the set $\tau(A,h_1B_1,\dots,h_nB_n)$ is nonempty and not generic in $H$.

Let $L'$ be the language $L$ of $T$ extended by new unary predicates $P_A,P_{B_1},\dots,P_{B_n}$ that we interpret in $N$ as $A,B_1,\dots,B_n$, respectively, obtaining an $L'$-structure $N'$. Let $M'$ be an elementary substructure of $N'$ of power $\kappa$, containing $h_1,\dots,h_n$ and let $M=M'|_L$. Then the sets $A'=A\cap M,B_i'=B_i\cap M,i=1,\dots,n$, are subsets of $G$ externally definable in $M$, the set $A'$ is strongly generic in $G$ and the $G$-algebra $R'$ generated by the sets $B_1',\dots B_n'$ is generic.

Let $R_0',R_1'$ be some maximal generic $G$-subalgebras of $\Def_{ext,G}(M)$ containing $A'$ and $R'$, respectively. Since $M'\prec N'$, the set $\tau(A',h_1B_1',\dots,h_nB_n')$ is nonempty and not generic in $G$. Hence $A'\not\in R_1'$ and $R_0'\neq R_1'$. Therefore $G$ does not have the gip, a contradiction.
\end{proof}
Proposition \ref{4.9} is a counterpart of \cite[Lemma 4.5]{ne2} that deals with the dual property that $G$ has external generic types.

Given $M\prec^* N$ and $p\in S_{ext,G}(M)$ that is almost periodic, weak heir and [weak] coheir
extensions are distinguished extensions of $p$ in $S_{ext,G}(N)$. Sometimes they are almost periodic, sometimes not.

For example, let $M$ be an o-minimal expansion of the ordered field of reals. Since $M$ is Dedekind complete, $\Def_{ext}(M)=\Def(M)$ \cite{marste}, hence $M\prec^* N$ means just $M\prec N$. Let $G_1=(M,+)$. Then the only almost periodic types in $S_{G_1}(M)$ are the two types at infinity $p_{-\infty}$ and $p_{+\infty}$. For every $N\succ M$ they extend uniquely to almost periodic types $p_{-\infty}',p_{+\infty}'\in S_{ext,G_1}(N)$. The types $p_{-\infty}',p_{+\infty}'$ are weak heirs and not weak coheirs over $M$, provided $N\neq M$. In this example the groups $G_1(N)$ have the gip for all $N\equiv M$. Also, if $N$ is a proper elementary extension of $M$, then the set $WH_{G_1}(N/M)$ is not closed in $S_{ext,G_1}(N)$.

Next assume $G_2$ is a definably compact definable group in $M$. In this case by \cite{ne2} the groups $G_2(N)$ have external generic types for every $N\equiv M$ containing the parameters of the definition of $G_2$. Also, for all $N\succ M$ the almost periodic types in $S_{ext,G_2}(N)$ are exactly the coheirs of the almost periodic types in $S_{G_2}(M)$ and not weak heirs over $M$.

Combining these two examples we create a structure $M'$ with two sorts containing $G_1$ and $G_2$, respectively. Let $G=G_1\times G_2$. Then $G$ has neither external generic types nor the gip and also if $p\in S_{ext,G}(M')$ is almost periodic, $N'$ is a sufficiently large $*$-elementary extension of $M'$ and $p'\in S_{ext,G}(N')$ is an almost periodic extension of $p$, then $p'$ is neither a weak heir nor a weak coheir of $p$.

Adam Malinowski\\
 Instytut Matematyczny, Uniwersytet Wroc{\l}awski, pl.Grunwaldzki 2, 50-384 Wroc{\l}aw, Poland\\
{\em E-mail address:} aadam.malinowski at gmail.com\\

Ludomir Newelski (corresponding author)\\
 Instytut Matematyczny, Uniwersytet Wroc{\l}awski, pl.Grunwaldzki 2, 50-384 Wroc{\l}aw, Poland\\
{\em E-mail address:} Ludomir.Newelski at math.uni.wroc.pl

\end{document}